\documentclass[a4paper]{amsart}
\usepackage{graphicx}
\usepackage{hyperref}
\usepackage{wasysym}
\usepackage{pdfsync}
\usepackage{listings}
\usepackage{color}
\usepackage[T1]{fontenc}
\usepackage[utf8]{inputenc} 


\usepackage{amsmath, amssymb, amsfonts, amscd, amsthm}
\usepackage{upgreek}
\newcommand{\C}{\mathbf{C}}
\renewcommand{\r}{\mathbf{R}}
\renewcommand{\k}{\mathbf{k}}
\newcommand{\W}{\mathcal{W}}
\newcommand{\id}{\mathfrak{I}}

\newcommand{\z}{\mathbf{Z}}
\newcommand{\q}{\mathbf{Q}}
\newcommand{\f}{\mathbf{F}}
\newcommand{\ft}{\mathbf{F}_2}
\newcommand{\gr}{\operatorname{gr}}
\newcommand{\supp}{\operatorname{supp}}
\newcommand{\cha}{\omega}
\newcommand{\ourmap}{\zeta}
\newcommand{\ou}{\,\textnormal{\sc or}\,}
\newcommand{\G}{\mathcal{G}}
\newcommand{\U}{BO_\infty^N}


\newtheoremstyle{pedro}{}{}{\itshape}{}{\sc}{~--}{ }{\thmname{#1}\thmnumber{ #2}\thmnote{ (#3)}}

\newtheoremstyle{pedrodef}{}{}{}{}{\sc}{~--}{ }{\thmname{#1}\thmnumber{ #2}\thmnote{ (#3)}}

\theoremstyle{pedro}
\newtheorem{lem}{Lemma}[section]

\newtheorem{thm}[lem]{Theorem}

\newtheorem{prop}[lem]{Proposition}

\newtheorem{coro}[lem]{Corollary}

\theoremstyle{remark}

\newtheorem{rmk}[lem]{Remark}

\theoremstyle{pedrodef}

\newtheorem{ex}[lem]{Example}

\title[Milnor~$K$-theory and the graded representation ring]{Milnor~$K$-theory and\\ the graded representation ring
}
\author{Pierre Guillot and J\'an Min\'a\v{c}}

\address{
Pierre Guillot\\Universit\'{e} de Strasbourg\\
Institut de Recherche Math\'{e}matique Avanc\'{e}e\\
7~Rue Ren\'{e} Descartes\\
67084 Strasbourg, France}
\email{guillot@math.unistra.fr}

\address{
J\'an Min\'a\v{c}\\
Department of Mathematics, Middlesex College\\
The University of Western Ontario\\
London, Ontario, Canada, N6A 5B7
}
\email{minac@uwo.ca}

\emergencystretch = 0.5em
\numberwithin{equation}{section}

\begin{document}

\maketitle

\begin{abstract}
  Let~$F$ be a field, let~$G= Gal(\bar F/F)$ be its absolute Galois
  group, and let~$R(G, \k)$ be the representation ring of~$G$ over a
  suitable field~$\k$.  In this preprint we construct a ring
  homomorphism from the mod 2 Milnor $K$-theory $k_*(F)$ to the graded
  ring~$\gr R(G, \k)$ associated to Grothendieck's $\gamma
  $-filtration.  We study this map in particular cases, as well as a
  related map involving the~$W$-group~$\G$ of~$F$, rather than~$G$.
  The latter is an isomorphism in all cases considered.  

  Naturally this echoes the Milnor conjecture (now a theorem), which
  states that $k_*(F)$ is isomorphic to the mod 2 cohomology of the
  absolute Galois group~$G$, and to the graded Witt ring~$\gr W(F)$.

  The machinery developed to obtain the above results seems to have
  independent interest in algebraic topology. We are led to construct
  an analog of the classical Chern character, which does not involve
  complex vector bundles and Chern classes but rather real vector
  bundles and Stiefel-Whitney classes. Thus we show the existence of a
  ring homomorphism whose source is the graded ring associated to the
  corresponding~$K$-theory ring~$KO(X)$ of the topological space~$X$,
  again with respect to the~$\gamma $-filtration, and whose target is
  a certain subquotient of~$H^*(X, \ft)$.

  In order to define this subquotient, we introduce a collection of
  distinguished Steenrod operations. They are related to
  Stiefel-Whitney classes by combinatorial identities.
\end{abstract}


\section{Introduction}\label{sec-intro}

The Milnor conjecture (\cite{milnorconjecture}), now a theorem by
Voevodsky (\cite{vov1} and \cite{vov2}), is the statement that a certain
map
\[ h_F \colon k_*(F) \longrightarrow H^*(F, \ft)  \]
is an isomorphism. Here we have written~$k_*(F)$ for mod~$2$
Milnor~$K$-theory, and~$H^*(F, \ft)$ for the Galois cohomology of~$F$
(that is, the cohomology of its absolute Galois group).  However, the
conjecture has a second part, proved by Orlov, Vishik and
Voevodsky (\cite{vovwitt}), according to which there is also an
isomorphism
\[ s \colon k_*(F) \longrightarrow \gr W(F) \, ,   \]
where~$W(F)$ is the Witt ring of~$F$, and where~$\gr W(F)$ denotes the
graded ring associated to the filtration by powers of the fundamental
ideal~$I$. In his original paper Milnor relates these two statements
by means of a commutative square:
\[ \begin{CD}
k_n(F) @>s>> \gr^n W(F) = I^n / I^{n+1} \\
@V{h_F}VV                @VV{w_{2^{n-1}}}V \\
H^n(F, \ft) @>{\times \, \ell(-1)^{t - n}}>> H^{2^{n-1}}(F, \ft) \,
. 
\end{CD}
\]
Here~$\ell(-1)$ is a certain distinguished element in Galois
cohomology, and the bottom map is multiplication by~$\ell(-1)^{t - n}$
where~$t= 2^{n-1}$; the map denoted by~$w_{2^{n-1}}$ is (an algebraic
variant of) a Stiefel-Whitney class. This shows for example that,
whenever~$h_F$ is injective and~$\ell(-1)$ is not a zero divisor,
then~$s$ is also injective (see Theorem 4.1 and Remark 4.2
in~\cite{milnorconjecture}).

The map~$w_{2^{n-1}}$ is {\em a priori} not part of a {\em ring}
homomorphisms from~$\gr W(F)$ into the cohomology ring. However, one
may at least put (writing~$|x|$ for the degree of~$x$):
\[ \id_F = \{ x \in H^*(F, \ft) : \ell(-1)^{2^{|x|-1} - |x|} \, x = 0 \} \,
. 
\]
It is clear that~$\id_F$ is an ideal in~$H^*(F, \ft)$, and thus one
gets a commutative square of maps of rings: 
\[ \begin{CD}
k_*(F) @>s>> \gr W(F) \\
@V{h_F}VV                @VVV \\
H^*(F, \ft) @>>> H^*(F, \ft)/ \id_F \,
. 
\end{CD}
\]

Motivated by this, and with a view towards an application in field
theory, the first question we address in the paper is one in algebraic
topology: given a topological space~$X$, is there always an
ideal~$\id_X$ within the cohomology~$H^*(X, \ft)$ which generalizes
the ideal~$\id_F$ in the case of fields? Before we state the
(positive) answer, we need to present another crucial player in this
game, which is to replace the Witt ring. Namely, we shall consider
the~$K$-theory of real vector bundles over~$X$, written~$KO(X)$, and
the so-called~$\gamma $-{\em filtration}: this is defined {\em via} a
general construction, due to Grothendieck, which applies to
any~$\lambda $-ring (see~\cite{atiyahtall}, \cite{fultonlang}). Our
result, involving the associated graded ring~$\gr KO(X)$ with respect
to this filtration, is the following.

\begin{thm}
There is a collection of Steenrod operations~$\theta_n$, for~$n \ge
1$, each of degree~$2^{n-1} - n$, with the following properties. For
any topological space~$X$, let~$\W^*(X)$ be the subring of~$H^*(X, \ft)$
generated by the Stiefel-Whitney classes of real vector bundles
over~$X$. If we put 
\[ \id_X = \{ x \in \W^*(X) : \theta_{|x|} x = 0 \} \, , 
\]
then~$\id_X$ is an ideal in~$\W^*(X)$. 

Moreover, there is an explicit map of graded rings
\[ \cha \colon \gr KO(X) \longrightarrow \W^*(X) / \id_X \, .   \]
\end{thm}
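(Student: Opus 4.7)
The strategy mimics the classical Chern character construction, with real bundles and Stiefel--Whitney classes replacing complex bundles and Chern classes. The first and hardest task is to construct the operations $\theta_n$. Their degree $2^{n-1}-n$ is chosen so that on a class of degree $n$ the output lands in degree $2^{n-1}$, matching the Stiefel--Whitney class $w_{2^{n-1}}$ appearing in Milnor's square. I would characterize $\theta_n$ by its value on products of one-dimensional classes: for $x_1, \ldots, x_n \in H^1(X, \ft)$, the quantity $\theta_n(x_1 \cdots x_n)$ should equal an explicit symmetric polynomial of total degree $2^{n-1}$ in the $x_i$, chosen so as to record $w_{2^{n-1}}$ of the appropriate virtual real bundle assembled from line bundles with $w_1(L_i) = x_i$. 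Using the Cartan formula together with $\mathrm{Sq}^i(x) = x^{i+1}$ or $0$ for $x$ of degree one, such a symmetric polynomial can be realized by a universal polynomial in admissible Steenrod monomials; this defines $\theta_n$ as a natural cohomology operation.

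For the ideal property, take homogeneous $x \in \id_X$ of degree $n$ and $y \in \W^*(X)$ of degree $m$. By the splitting principle one reduces to the case where $x$ and $y$ are monomials in degree-one Stiefel--Whitney classes. The Cartan formula then expands $\theta_{n+m}(xy)$ into a sum of terms each of which should, by the defining identity of the $\theta_n$, contain $\theta_n(x)$ (or $\theta_m(y)$) as a factor, so that the hypothesis $\theta_n(x) = 0$ forces $\theta_{n+m}(xy) = 0$. For the map itself, I would define $\cha$ on the generator $(L_1-1)\cdots(L_n-1)$ of $\gamma^n KO(X)$ by
\[
  \cha\bigl([(L_1 - 1)\cdots(L_n - 1)]\bigr) = w_1(L_1)\cdots w_1(L_n) \pmod{\id_X}.
\]
Multiplicativity is manifest and additivity is by linear extension; the delicate point is well-definedness, namely that any relation modulo $\gamma^{n+1}$ among such generators should be sent into $\id_X$. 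Via the splitting principle this reduces to showing that $\theta_n$ annihilates every product of $n$ degree-one classes whose $K$-theoretic lift already lies one filtration level deeper, which is precisely the property one arranges in the definition of $\theta_n$.

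The principal obstacle is thus the combinatorial problem of producing the $\theta_n$ so that they simultaneously detect Stiefel--Whitney classes on products of line bundles, satisfy a Cartan-type factorization on products (giving the ideal property), and vanish on classes coming from $\gamma^{n+1}$ (giving well-definedness of $\cha$). Once these identities among symmetric functions, Stiefel--Whitney classes of virtual real bundles, and admissible Steenrod squares are established, the remaining verifications follow formally from the Cartan formula and the splitting principle for $KO$.
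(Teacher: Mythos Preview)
Your outline is close in spirit to the paper's argument, and the construction of~$\theta_n$ as well as the definition of~$\cha$ via the splitting principle are essentially right. The serious gap is in your proof of the ideal property.

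You propose that the Cartan formula expands~$\theta_{n+m}(xy)$ into terms each containing~$\theta_n(x)$ or~$\theta_m(y)$ as a factor. This is not true. Even in the simplest nontrivial case, with~$|y|=1$, one finds (using~$\theta_{n+1} = c(Sq^{2^n-n-1})$ and the Cartan formula for the antipode) an expansion
\[
\theta_{n+1}(xy) \;=\; \theta_n(x)\,y^{2^{n-1}} \;+\; \sum_{k \ne n-1} c\bigl(Sq^{\,2^n-n-2^k}\bigr)(x)\,y^{2^k},
\]
and the extra terms are \emph{not} multiples of~$\theta_n(x)$. (And even if your factorization held, knowing only~$\theta_n(x)=0$ would not kill the terms involving~$\theta_m(y)$.) So a purely Cartan-formula argument does not close.

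The paper's mechanism is different and uses a genuinely new idea you are missing. One first interprets~$\theta_n(x)$, for~$x$ a sum of degree-$1$ products, as~$w_{2^{n-1}}(\rho)$ for a suitable virtual bundle~$\rho$ with~$w_i(\rho)=0$ for~$i<2^{n-1}$. The hypothesis~$\theta_n(x)=0$ then says~$w_{2^{n-1}}(\rho)=0$ as well, but this alone is not enough: one needs the stronger vanishing~$w_i(\rho)=0$ for all~$i<2^{n}$. This ``doubling'' is exactly what Wu's formula provides (the first nonvanishing Stiefel--Whitney class of any virtual bundle occurs in a power-of-two degree). Only after this step can one show, by a direct computation with~$w_T$, that~$w_{2^n}(\rho(L-1))=0$ for any line bundle~$L$, which is~$\theta_{n+1}(xy)=0$. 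The appeal to Wu's formula is the crux; your sketch has nothing playing its role, and without it the argument does not go through.
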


In the text, this consists of Corollary~\ref{coro-ideal-sw} and
Theorem~\ref{thm-chern-character}; in
Lemma~\ref{lem-theta-n-for-fields}, we eventually prove that~$\id_X$
is indeed a generalization of~$\id_F$. 

\begin{rmk}
It was brought to our attention after this paper had been completed
that the operations~$\theta_n$ were considered in a different context
in the work~\cite{franjou} by Benson and Franjou; they were also
considered by Kuhn in~\cite{kuhn}, and by Adams in~\cite{adams}. Even
our use of the letter~$\theta $, almost agreeing with Kuhn's use
of~$\Theta $, is coincidental.
\end{rmk}

We think of the map~$\cha$ as a analog of the classical Chern
character. It is worth pointing out that its construction involves, in
degree~$n$, the map~$w_{2^{n-1}}$ as above. To give a flavour of the
operations~$\theta_n$, let us indicate that~$\theta_1 = \theta_2 = 1$,
\[ \theta_3 = Sq^1 \, , \quad \theta_4 = Sq^{3} Sq^{1} + Sq^{4} \, ,
\quad \theta_5 = Sq^{7} Sq^{3} Sq^{1} + Sq^{8} Sq^{2} Sq^{1} \, , \]
\[ \theta_6 = Sq^{15} Sq^{7} Sq^{3} Sq^{1} + Sq^{16} Sq^{6} Sq^{3}
Sq^{1} + Sq^{16} Sq^{7} Sq^{3} + Sq^{16} Sq^{8} Sq^{2} \, ,  \]
and 
\begin{multline*}
\theta_7 = Sq^{31} Sq^{15} Sq^{7} Sq^{3} Sq^{1} + Sq^{32} Sq^{14}
Sq^{7} Sq^{3} Sq^{1}\\ + Sq^{32} Sq^{15} Sq^{7} Sq^{2} Sq^{1} + Sq^{32}
Sq^{16} Sq^{6} Sq^{2} Sq^{1} + Sq^{32} Sq^{16} Sq^{8} Sq^{1} \, .
\end{multline*}

We will compute the ideal in many cases, mostly for classifying spaces
of finite groups. We also treat the universal case of the space~$
(BO_{\infty})^N$, which produces relations in~$\id_X$ for any
space~$X$ easily. In all of our examples we have~$\W^*(X) = H^*(X,
\ft)$ (as is often the case with familiar spaces).

This Theorem finds the following purely algebraic applications. An
important example of~$\lambda $-ring is given by the representation
ring~$R(G, \k)$ of the finite group~$G$ over the field~$\k$. One may go
through Grothendieck's construction of the~$\gamma $-filtration in
this case, and consider again the graded ring~$\gr R(G, \k)$. Few
results are available about these, and the reason may be that the
early investigations of the~$\gamma $-filtration on a~$\lambda
$-ring~$K$ were strongly focused on~$K\otimes\q$; indeed under some
conditions one has an isomorphism~$K\otimes \q \cong \gr K \otimes
\q$, one of the highlights of the theory (for example see Theorem III
3.5 in~\cite{fultonlang}). By contrast, each graded piece~$\gr ^n R(G,
\k)$ is torsion when~$G$ is finite ($n \ge 1$), so that the ring~$\gr
R(G, \k)\otimes \q$ is not interesting.

After giving some elementary calculations, namely in the case when~$G$
is cyclic and~$\k$ is either algebraically closed or~$\k=\r$, we achieve
the computation of~$\gr R(G, \k)\otimes \ft$ when~$G$ is an elementary
abelian~$2$-group. This relies heavily on the map~$\cha$, and indeed
we are not aware of any other approach. More precisely, we use the map 
\[ R(G, \r) \longrightarrow KO(BG) \, ,   \]
obtained by associating to any representation~$r \colon G \to O_n$ the
vector bundle whose classifying map is~$Br \colon BG \to BO_n$; this
is a homomorphism of~$\lambda $-rings, so it is compatible with
the~$\gamma $-filtration and induces a homomorphism between the
associated graded rings. Combined with~$\cha$, this yields a useful
map~$\gr R(G, \r) \to \W^*(BG)/\id_{BG}$. When~$G$ is elementary
abelian, it is an isomorphism, though this is far from being always
true. 

Based on this, one can tackle many groups of small size, by
considering their elementary abelian subgroups, and in this fashion we
compute~$\gr R(D_4, \k)\otimes \ft$, where~$D_4$ is the dihedral group
of order~$8$. We would like to emphasize that our computations show,
contrary to a common belief, that the~$\gamma $-filtration is
reasonably well-behaved.

As our final topic, we return to Milnor's conjecture and propose a
variant. The following Theorem gave its name to this paper. Recall
that the~$W$-group of a field is a certain quotient~$\G$ of the Galois
group~$G$ (see~\cite{minacspira} and \S\ref{subsec-milnor-k-theory}
below), which is frequently much easier to study.

\begin{thm}
  Let~$F$ be any field, and let~$\k$ be any field of characteristic
  different from~$2$.  Let~$\bar F$ be
  the separable closure of~$F$, and let~$G = Gal(\bar F / F)$ be the
  absolute Galois group of~$F$. Then there is a map
\[ \ourmap \colon k_*(F) \longrightarrow  \gr R(G, \k)\otimes \ft \, .   \]
If the characteristic of~$F$ is not~$2$, and if~$\k$ possesses an
embedding into~$\r$, then there is a commutative square
\[ \begin{CD}
k_*(F) @>{\ourmap}>> \gr R(G, \k)\otimes \ft \\
@V{h_F}VV             @VV{\cha}V  \\
H^*(F) @>>> H^*(F) / \id_F \, . 
\end{CD}
\]
 If we call~$\G$
the~$W$-group of~$F$, then there is a map
\[ \ourmap \colon k_*(F) \longrightarrow  (\gr R(\G, \k)\otimes \ft)_{dec} \, .   \]
And if~$\k$ possesses an embedding into~$\r$, then there is a commutative
square 
\[ \begin{CD}
k_*(F) @>{\ourmap}>> (\gr R(\G, \k)\otimes \ft)_{dec} \\
@V{h_F}VV             @VV{\cha}V  \\
H^*(\G)_{dec} @>>> H^*(\G)_{dec} / \id_\G \, . 
\end{CD}
\]
\end{thm}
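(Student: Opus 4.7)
The plan is to construct $\ourmap$ via generators and relations of $k_*(F)$. For $a \in F^\times$, write $\chi_a \colon G \to \{\pm 1\} \subset \k^\times$ for the quadratic character classifying $F(\sqrt{a})/F$; this is a well-defined 1-dimensional $\k$-representation of $G$ since $\k$ has characteristic different from~$2$. I would set $\ourmap(\ell(a)) = [\chi_a] - 1 \in \gr^1 R(G,\k)\otimes\ft$ and extend multiplicatively. Well-definedness then amounts to checking the three defining relations of mod~$2$ Milnor $K$-theory: additivity $\ell(ab) = \ell(a) + \ell(b)$, the relation $\ell(a^2) = 0$, and the Steinberg relation $\ell(a)\ell(1-a) = 0$ for $a \ne 0, 1$.

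The first two are immediate: the identity $\chi_{ab} - 1 = (\chi_a - 1) + (\chi_b - 1) + (\chi_a - 1)(\chi_b - 1)$ in $R(G,\k)$ has last summand in $\gamma^2$, so additivity holds modulo $\gamma^2$; the triviality of $\chi_{a^2}$ handles the second at once.

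The Steinberg relation is the main technical step. My input is the norm identity $1-a = N_{F(\sqrt{a})/F}(1-\sqrt{a})$, which I would use to produce an honest 2-dimensional representation of $G$, namely $V_a = \operatorname{Ind}_{H_a}^G \chi_{1-\sqrt{a}}$, where $H_a = Gal(\bar F / F(\sqrt a))$ and $\chi_{1-\sqrt a}$ is the quadratic character of $H_a$ attached to $1-\sqrt a \in F(\sqrt a)^\times$. A standard computation of the determinant of an induced representation yields $\det V_a = \chi_a \chi_{1-a}$. Applying the splitting principle to $V_a$ with formal line bundles $L_1, L_2$ satisfying $L_1 + L_2 = V_a$ and $L_1 L_2 = \det V_a$, and choosing $L_1 = \chi_a$, $L_2 = \chi_{1-a}$, identifies the $\gamma$-theoretic second Chern class $c_2(V_a)$ with the formal product $([\chi_a] - 1)([\chi_{1-a}] - 1)$ modulo $\gamma^3$. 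Steinberg thereby reduces to the 2-divisibility of $c_2(V_a)$ in $\gr^2 R(G,\k)$. This is the main obstacle I anticipate: the argument should leverage that $V_a$ is a genuine representation (not merely a formal difference) whose second Stiefel-Whitney class vanishes in Galois cohomology by the classical Steinberg identity, translating this vanishing into 2-divisibility at the level of the $\gamma$-filtration.

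The commutative square then reduces, by multiplicativity of all maps, to its value on 1-fold products $\ell(a)$. One has $h_F(\ell(a)) = w_1(\chi_a) = (a) \in H^1(F, \ft)$, while $\cha(\ourmap(\ell(a))) = \cha([\chi_a] - 1) = w_1(\chi_a)$ by the construction of $\cha$, which sends the class of a 1-dimensional representation to its first Stiefel-Whitney class (the analog of the classical Chern character on line bundles). The $W$-group version proceeds in parallel: each quadratic character $\chi_a$ factors through $\G^{ab}$ by the defining properties of the $W$-group, so the same recipe yields $\ourmap \colon k_*(F) \to \gr R(\G,\k)\otimes\ft$, with image in the decomposable subring since it is generated by $\gr^1$. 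The commutative square for $\G$ follows by the same check on generators, using the compatibility of $h_F$ with the projection $G \to \G$ and the compatibility of $\cha$ with restriction to the decomposable subring of $H^*(\G)$.
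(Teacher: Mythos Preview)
Your setup in degree~$1$ is the same as the paper's, and the commutativity of the squares is handled identically. The gap is entirely in your treatment of the Steinberg relation.

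The step ``choosing $L_1 = \chi_a$, $L_2 = \chi_{1-a}$'' is not legitimate. The splitting principle furnishes \emph{formal} Chern roots of $V_a$, but you cannot declare them to be any particular pair of line bundles simply because their product happens to equal $\det V_a$. Knowing $c_1(V_a) = c_1(\chi_a) + c_1(\chi_{1-a})$ says nothing about $c_2(V_a)$. In fact the claimed identity $c_2(V_a) \equiv (\chi_a - 1)(\chi_{1-a} - 1)$ is false: when $a$ and $1-a$ are independent modulo squares, the representation $V_a$ factors through a $D_4$-quotient of $G$ as the irreducible $2$-dimensional representation~$\Delta$, while $\chi_a, \chi_{1-a}$ become the characters~$r_1, r_2$. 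The paper computes (Proposition~\ref{prop-graded-D4}) that in $\gr R(D_4,\k)\otimes\ft$ one has $c_1(r_1)c_1(r_2) = 0$ whereas $c_2(\Delta)$ is a nonzero polynomial generator. So $c_2(V_a)$ and $(\chi_a - 1)(\chi_{1-a}-1)$ are not equal, and your proposed reduction to ``$2$-divisibility of $c_2(V_a)$'' collapses. (Your fallback sketch, inferring $2$-divisibility of $c_2$ from vanishing of $w_2$, would in any case require injectivity of $\cha$ in degree~$2$, which is not available \emph{a priori}.)

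The paper takes a different route. Rather than analyse $V_a$ directly, it invokes the classical Galois-theoretic fact that when $a + b = 1$ with $a, b$ independent in $F^\times/(F^\times)^2$, both $F(\sqrt a)$ and $F(\sqrt b)$ sit inside a common $D_4$-extension of~$F$, and then appeals to the already-established relation $c_1(r_1)c_1(r_2) = 0$ in $\gr R(D_4,\k)\otimes\ft$. The degenerate case $a \equiv 1-a$ modulo squares is handled via a $\z/4$-extension and the relation $c_1(\varepsilon)^2 = 0$ in $\gr R(\z/4,\k)\otimes\ft$ (this is where the hypothesis $\sqrt{-1}\notin\k$ enters). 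The $W$-group variant then follows because every such $D_4$- or $\z/4$-extension is visible to~$\G$. In short, the Steinberg relation is verified by pushing down to small finite quotients whose graded representation rings have been computed explicitly, not by a Chern-class manipulation on~$G$ itself.
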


 Here we use the notation~$A^*_{dec}$ for the {\em
  decomposable part} of the graded algebra~$A^*$, that is, the
subalgebra generated by elements of degree~$1$. For example one has
famously
\[ H^*(G, \ft) = H^*(\G, \ft)_{dec} \, .   \]

The proof of this Theorem relies on an understanding of the rings~$\gr
R(D_4, \k)$ and $\gr R(\z/4, \k)$. The reason behind this is
that~$D_4$ and~$\z/4$ play a special role as small quotients of
the~$W$-group (for more on this and generally for an exploration of
small~$p$-quotients of the absolute Galois group,
see~\cite{minacefrat}).

Moreover, the computations we perform with~$\gr R(G, \k)$ throughout
the article are useful for studying the map~$\ourmap$ on examples. We
end up proving that the map
\[ k_*(F) \xrightarrow{~\ourmap~} (\gr R(\G, \r)\otimes \ft)_{dec}  \]
is an isomorphism when~$F$ is a finite field, or a local field, or a
real-closed field, or a global field, or when~$\ell(-1)$ is not a zero
divisor in~$H^*(F, \ft)$, and in several other cases. There is no
example yet for which this map is not an isomorphism. (This would also
hold with~$\k=\q$ instead of~$\k=\r$, and we shall argue that there is
essentially no other interesting choice for~$\k$).

The paper is organized as follows. In Section~\ref{sec-ideal}, we
present the operations~$\theta_n$ and use them to define the
ideal~$\id_X$. In Section~\ref{sec-gradedrep}, we construct the
``character''~$\cha$; we also use our new machinery in order to
compute the ring~$\gr R(G, \k)$ for several examples of finite
groups~$G$. Finally in Section~\ref{sec-milnor}, we return to
Milnor~$K$-theory and define the map~$\ourmap$.

\smallskip

\noindent {\em Acknowledgements.} The first-named author would like to
thank the Pacific Institute for Mathematical Sciences at Vancouver for
its warm welcome during the year 2010-2011, and the Centre National de
la Recherche Scientifique in France for the financial support that
made this visit possible. 

Both authors thank Sunil Chebolu and Marcus Zibrowius for stimulating
discussions, Vincent Franjou for bringing the paper~\cite{franjou} to
our attention, and Paul Goerss, Parimala and Burt Totaro for
encouraging words.

\section{The canonical ideal}\label{sec-ideal}

In this section we prove that the mod~$2$ cohomology of any
topological space possesses a canonical subquotient. The latter will
be the target of a map from the graded~$K$-theory ring, as described
in the next section. Proofs are mainly combinatorial.

The {\em real} topological~$K$-theory of the topological space~$X$
will be denoted by~$KO(X)$. This is the Grothendieck ring of real
vector bundles over~$X$, which is different from the ring that is
sometimes called the ``representable~$K$-theory'' of~$X$ and defined
by
\[ \mathbf{KO}(X) = [X, \z \times BO] \, .   \]
While~$KO(X)$ and~$\mathbf{KO}(X)$ agree when~$X$ is a finite
CW-complex for example, there are cases, such as~$X = BG$ for a finite
group~$G$, for which these two rings are not isomorphic. We shall
briefly mention~$\mathbf{KO}(X)$ in the course of the proof of
Lemma~\ref{lem-sw-involves-only-reps}, while most of the paper deals
with~$KO(X)$. Also note that our results never refer to Atiyah's
definition of the ``real $K$-theory'', which is designed for spaces
with involutions.

The mod~$2$ cohomology~$X$ we write~$H^*(X)$.

\subsection{The operations~$\theta_n$}

Given a class~$t\in H^1(X) = [X, \r \mathbb{P}^\infty]$, one can
consider the corresponding line bundle~$L$ over~$X$. The first
Stiefel-Whitney class of~$L$, essentially by definition, is~$w_1(L)=
t$. Our first objective is to study an analogous relationship between
line bundles and Stiefel-Whitney classes, as follows.

\begin{thm} \label{thm-theta-n}
For each~$n\ge 1$, there is a Steenrod operation~$\theta_n$ with the
following property. Given a space~$X$ and classes~$t_1, t_2, \ldots,
t_n \in H^1(X)$, let~$L_i$ be the line bundle corresponding
to~$t_i$. Let~$\rho$ denote the virtual vector bundle over~$X$ defined by

\[ \rho = (L_1 - 1) (L_2 - 1) \cdots (L_n - 1) \, ,   \]
where~$1$ stands for the trivial line bundle. Then the Stiefel-Whitney
class~$w_i(\rho)$ is zero for~$1 \le i < 2^{n-1}$, while 
\[ w_{2^{n-1}} (\rho) = \theta_n(t_1 t_2 \cdots t_n) \, .   \]
\end{thm}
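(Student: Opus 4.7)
The plan is to reduce the problem to the universal case, compute $w(\rho)$ there, and identify its degree-$2^{n-1}$ part as a Steenrod operation applied to $t_1 \cdots t_n$. By naturality of Stiefel-Whitney classes, an $n$-tuple of line bundles on any space $X$ is classified by a map $X \to (\r\mathbb{P}^\infty)^n = BV$ for $V = (\z/2)^n$, with the $t_i$ pulled back from the canonical generators of $H^1(BV; \ft) = \ft[t_1, \ldots, t_n]$. So it suffices to prove the statement in this universal case, and the existence (and explicit form) of $\theta_n$ will be extracted from that universal calculation.

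Expand $\rho = \sum_S (-1)^{n-|S|} M_S$ in $KO(BV)$, where $M_S = \bigotimes_{i \in S} L_i$ is a line bundle with $w_1(M_S) = t_S := \sum_{i \in S} t_i$. Since $w$ is a homomorphism from the additive group of $KO(BV)$ to the units of $H^*(BV; \ft)$, one obtains
\[
w(\rho) \;=\; \frac{N}{D}, \qquad N = \prod_{|S| \equiv n\,(2)} (1 + t_S), \qquad D = \prod_{|S| \not\equiv n\,(2)} (1 + t_S),
\]
where $S$ ranges over subsets of $\{1, \ldots, n\}$ and one trivial factor $1 + t_\emptyset = 1$ appears in whichever product matches the parity of $n$. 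Thus $N$ and $D$ are honest polynomials, of degrees $2^{n-1}$ and $2^{n-1}-1$ in some order.

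The central combinatorial assertion is that $N$ and $D$ agree in every degree $< 2^{n-1}$, which is exactly the vanishing $w_i(\rho) = 0$ for $1 \le i < 2^{n-1}$. Since $\rho$ becomes zero when any $L_i$ is trivial, it follows immediately that $N - D$ is divisible by $t_1 \cdots t_n$ and, being $S_n$-symmetric, equals $t_1 \cdots t_n$ times a symmetric polynomial. The stronger low-degree cancellation should follow from a character-theoretic identity on $V$: the subsets of a given parity index an affine hyperplane in $V^*$, and symmetric sums of powers over such a hyperplane should vanish in all degrees below $2^{n-1}$. An alternative route is to exploit that $\rho$ lies in $\gamma^n KO(BV)$ (as a product of $n$ elements of $\gamma^1$) together with a general bound relating the $\gamma$-filtration on real $K$-theory to low-degree Stiefel-Whitney classes.

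Granted the vanishing, $w_{2^{n-1}}(\rho)$ is the unique nonzero homogeneous component of $N - D$, of degree $2^{n-1}$ and divisible by $t_1 \cdots t_n$. The principal obstacle is to exhibit this class as $\theta_n(t_1 \cdots t_n)$ for a specific element $\theta_n$ of the mod $2$ Steenrod algebra $\mathcal{A}$: not every symmetric multiple of $t_1 \cdots t_n$ lies in the $\mathcal{A}$-orbit of $t_1 \cdots t_n$, so this is a genuine constraint on the combinatorial shape of $N - D$. The plan is to unfold $N/D$ recursively so as to produce $\theta_n$ explicitly as a sum of admissible monomials in the $Sq^i$ (matching the formulas listed after the theorem for $n \le 7$), and then verify the identity via the Cartan formula together with $Sq^1(t_i) = t_i^2$. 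Once the equality is proved in $H^*(BV; \ft)$, naturality of Steenrod squares propagates it to arbitrary $X$.
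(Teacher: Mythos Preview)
Your setup is sound and matches the paper's: reduce to the universal case $BV$, write $w(\rho)=N/D$ with $N$ and $D$ the products of $(1+t_S)$ over subsets of the two parities, and recognise that the low-degree vanishing amounts to $N\equiv D$ below degree $2^{n-1}$. But both of the two substantive steps are left as gaps, and in each case the paper supplies a concrete argument that your sketch does not.

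\textbf{The low-degree vanishing.} Your two suggestions are both problematic. The appeal to the $\gamma$-filtration is circular in this paper: the fact that $w_{2^{n-1}}$ vanishes on $\Gamma^{n+1}$ (Lemma~\ref{lem-w-well-defined}) is \emph{deduced from} Theorem~\ref{thm-theta-n}, not the other way round, and no independent bound of the required strength is quoted. The ``character-theoretic identity on $V$'' is not spelled out. The paper instead proves an inductive lemma (Lemma~\ref{lem-rho-tens-L-minus-one}): if $\rho=E^+-E^-$ with $E^\pm$ of equal rank and $w_i(\rho)=0$ for $i<2^{n-1}$, then $w_i(\rho(L-1))=0$ for $i<2^{n}$, for any line bundle $L$. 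This is a direct Stiefel--Whitney computation exploiting the symmetry between $E^+$ and $E^-$, and it gives the vanishing by a routine induction on $n$.

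\textbf{The identification as a Steenrod operation.} You correctly flag this as the principal obstacle, but ``unfold $N/D$ recursively'' is not a method, and producing $\theta_n$ directly as a sum of admissible $Sq^I$ would be painful (and is not how the paper proceeds). The paper's key step is the combinatorial identity (Lemma~\ref{lem-key})
\[
\prod_{k~\textnormal{odd}} m_k \;=\; \sum_{2^{r_1}+\cdots+2^{r_n}=2^{n-1}} t_1^{2^{r_1}}\cdots t_n^{2^{r_n}} \, ,
\]
where the left side is precisely the top coefficient of $N/D$ (equivalently, $w_{2^{n-1}}(E^{odd})$). The proof is an elegant divisibility argument: one checks that the right-hand polynomial $P$ satisfies $P(t_1\leftarrow t_{i_1}+\cdots+t_{i_{2k}})=0$, hence is divisible by every $(t_{j_1}+\cdots+t_{j_{2k+1}})$, and a degree count finishes it. Once $w_{2^{n-1}}(\rho)$ is in this form, the identification with a Steenrod operation is immediate via Milnor's coaction $\lambda^*(t)=\sum t^{2^i}\otimes\xi_i$: one reads off $\theta_n=\sum Sq(i_1,\ldots,i_k)$, the sum over the entire Milnor basis in degree $2^{n-1}-n$ (equivalently $\theta_n=c(Sq^{2^{n-1}-n})$ for the antipode $c$). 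Your proposal is missing both the identity and the Milnor-basis description of $\theta_n$; these are the heart of the proof.
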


\begin{ex}
  As~$w_1(L-1)= w_1(L) = t$ we see that we may take the identity
  operation for~$\theta_1$. A quick calculation shows that the second
  Stiefel-Whitney class of~$(L_1 - 1)(L_2 - 1)$ is~$t_1 t_2$, so we
  may again take the identity for~$\theta_2$. On the other hand, the
  fourth Stiefel-Whitney class of~$(L_1 -1)(L_2-1)(L_3-1)$ is
\[ t_1^2 t_2 t_3 + t_1 t_2^2 t_3 + t_1 t_2 t_3^2 = Sq^1(t_1 t_2 t_3)
\, ,   \]
so that~$Sq^1$ can be taken for~$\theta_3$.
\end{ex}

The proof of Theorem~\ref{thm-theta-n} will occupy the rest of
this section. We start with a couple of lemmas.

\begin{lem} \label{lem-first-sw}
Let~$\rho \in KO(X)$, and let~$i$ be the least positive integer so
that~$w_i(\rho ) \ne 0$. Then~$i$ is a power of two.
\end{lem}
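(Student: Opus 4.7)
The plan is to argue by contradiction using Wu's formula, which expresses each $Sq^a(w_b(\rho))$ as an explicit polynomial in the Stiefel--Whitney classes of $\rho$. Suppose $w_i(\rho) \ne 0$ while $w_j(\rho) = 0$ for all $0 < j < i$, and assume for contradiction that $i$ is not a power of $2$. Let $2^r$ be the largest power of $2$ strictly less than $i$, and write $i = 2^r + s$ with $0 < s < 2^r$. Since $2^r < i$ we have $w_{2^r}(\rho) = 0$, so $Sq^s(w_{2^r}(\rho)) = 0$ as well.

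Next I would expand this zero using Wu's formula
\[
Sq^a(w_b) \;=\; \sum_{t=0}^{a} \binom{b-a+t-1}{t}\, w_{a-t}\, w_{b+t},
\]
applied with $a = s$ and $b = 2^r$. Every term indexed by $0 \le t < s$ involves either $w_{s-t}$ with $0 < s-t < i$, or $w_{2^r+t}$ with $2^r \le 2^r + t < i$, so such terms vanish by the minimality of $i$. Only the $t = s$ summand survives, collapsing the identity $Sq^s(w_{2^r}(\rho)) = 0$ to
\[
\binom{2^r - 1}{s}\, w_i(\rho) \;=\; 0.
\]
Finally, Lucas's theorem shows $\binom{2^r - 1}{s}$ is odd: the binary expansion of $2^r - 1$ consists of $r$ ones, and $0 \le s \le 2^r - 1$, so every bit of $s$ is bounded by the corresponding bit of $2^r - 1$. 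This forces $w_i(\rho) = 0$, contradicting the choice of $i$; hence $i$ must be a power of $2$.

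The only real obstacle is the bookkeeping in Wu's formula, but once it is applied with the right values of $a$ and $b$ the minimality hypothesis immediately kills every term except one, and Lucas's theorem disposes of the surviving binomial coefficient. This is essentially the classical argument found in Milnor--Stasheff's treatment of characteristic classes, and no substantive complication is expected.
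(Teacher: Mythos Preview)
Your proof is correct and is precisely the standard argument the paper is pointing to: the paper's entire proof reads ``This is well-known, and follows from Wu's formula (see~\cite{milnorchar}),'' and you have supplied exactly those details. One small remark worth making explicit (though the paper does not either) is that Wu's formula, usually stated for genuine bundles, extends verbatim to virtual bundles in $KO(X)$, since the identity already holds in $H^*(BO)=\ft[w_1,w_2,\ldots]$ and does not involve the rank.
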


This is well-known, and follows from Wu's formula
(see~\cite{milnorchar}). 

\begin{lem} \label{lem-rho-tens-L-minus-one}
  Let~$\rho \in KO(X)$, and suppose that we may write~$\rho = E^+ -
  E^-$, where~$E^+$ and~$E^-$ are vector bundles of the same
  rank. Suppose also that~$w_i(\rho ) = 0$ for~$1 \le i <
  2^{n-1}$. Then for any line bundle~$L$, we have~$w_i(\rho (L-1)) =
  0$ for~$1 \le i < 2^{n}$.
\end{lem}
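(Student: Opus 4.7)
The plan is to derive an explicit formula for $w(\rho \otimes L)$ in terms of the $w_j(\rho)$ and $t = w_1(L)$, use it to obtain a preliminary degree bound on $w(\rho(L-1))-1$, and then invoke Lemma~\ref{lem-first-sw} to reach the full range $i < 2^n$.

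Combining the rank-$r$ expansion $w(E \otimes L) = \sum_k (1+t)^{r-k} w_k(E)$ with the Whitney formula $w_k(E^+) = \sum_j w_j(\rho) w_{k-j}(E^-)$, and exploiting that $\rho$ has virtual rank $0$ so that the factor $(1+t)^r$ cancels between numerator and denominator, one obtains the clean identity
\[ w(\rho \otimes L) = \sum_{j \ge 0} (1+t)^{-j} w_j(\rho). \]
Under the hypothesis $w_j(\rho) = 0$ for $1 \le j < 2^{n-1}$, this gives
\[ w(\rho \otimes L) - w(\rho) = \sum_{j \ge 2^{n-1}} \bigl[(1+t)^{-j} - 1\bigr]\, w_j(\rho). \]

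In characteristic $2$ the factorization $(1+t)^j = \prod_{i:\,j_i=1}(1+t^{2^i})$, where $j = \sum j_i 2^i$ is the binary expansion, implies that the lowest-degree term of $(1+t)^{-j}-1$ is $t^{2^{\nu_2(j)}}$, with $\nu_2(j)$ the $2$-adic valuation. Hence the $j$-th summand lies in $H^{\ge j+2^{\nu_2(j)}}(X)$. The minimum of $j+2^{\nu_2(j)}$ over $j \ge 2^{n-1}$ is $2^{n-1}+2$, attained at the odd index $j = 2^{n-1}+1$ (for $n \ge 2$). Because $w(\rho)$ is a unit whose inverse is again of the form $1 + (\text{degree} \ge 2^{n-1})$, division preserves the estimate:
\[ w(\rho(L-1)) - 1 \in H^{\ge 2^{n-1}+2}(X), \]
so that $w_i(\rho(L-1)) = 0$ for $1 \le i \le 2^{n-1}+1$.

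To cover the remaining indices $i \le 2^n-1$, apply Lemma~\ref{lem-first-sw} to $\rho(L-1) \in KO(X)$: the smallest positive index $i$ with $w_i(\rho(L-1)) \ne 0$ must be a power of $2$; since we have already excluded $i \le 2^{n-1}+1$, the next admissible value is $2^n$, whence $w_i(\rho(L-1)) = 0$ for all $1 \le i < 2^n$. The main obstacle is that the direct degree estimate falls short of $2^n$ for $n \ge 3$---the odd summand $j = 2^{n-1}+1$ contributes at total degree only $2^{n-1}+2$---and it is precisely the ``power of $2$'' constraint from Lemma~\ref{lem-first-sw}, combined with the absence of powers of $2$ strictly between $2^{n-1}$ and $2^n$, that closes the gap.
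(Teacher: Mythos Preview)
Your proof is correct, and it takes a genuinely different route from the paper's.

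The paper argues by symmetry between~$E^+$ and~$E^-$: from the hypothesis it deduces that~$w_i(E^+) = w_i(E^-)$ for~$i < 2^{n-1}$, writes~$\rho(L-1) = (LE^+ + E^-) - (LE^- + E^+)$, and observes that the universal polynomials expressing~$w_i(LE^\pm + E^\mp)$ in terms of~$w_1(L)$ and the~$w_j(E^\pm)$ are symmetric in~$E^+$ and~$E^-$ once the low-degree classes agree. This handles~$i < 2^{n-1}$ immediately, and the borderline case~$i = 2^{n-1}$ is treated by an explicit expansion of~$w_{2^{n-1}}(LE^+)$. Lemma~\ref{lem-first-sw} then carries the vanishing from~$i \le 2^{n-1}$ up to~$i < 2^n$, exactly as in your last paragraph.

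Your approach bypasses the~$E^\pm$ bookkeeping entirely: the identity $w(\rho \otimes L) = \sum_{j \ge 0} (1+t)^{-j} w_j(\rho)$ for virtual bundles of rank~$0$ is a clean and reusable formula, and the $2$-adic estimate on the lowest term of~$(1+t)^{-j}-1$ gives a sharper intermediate bound (vanishing through~$i = 2^{n-1}+1$ rather than just~$2^{n-1}$) before the same appeal to Lemma~\ref{lem-first-sw}. The paper's argument is perhaps more elementary---no formal inverses or valuation analysis---while yours is more structural and makes transparent exactly which summand~$j = 2^{n-1}+1$ is the obstruction to a direct proof.
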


\begin{proof}
From the previous lemma, it is enough to prove that~$w_i(\rho (L-1)) =
0$ for~$1\le i \le 2^{n-1}$.

Here and elsewhere we shall use the total Stiefel-Whitney class, which
is the homomorphism 
\[ w_T \colon KO(X) \to 1 + T H^*(X) [[T]]  \]
defined by~$w_T(\rho ) = 1 + w_1(\rho )T + w_2(\rho )T^2 + \cdots
$. For example, when~$\rho $ is as in the statement of the Lemma, we
have~$w_T( \rho ) = w_T(E^+) w_T(E^-)^{-1}$, and from this we see that
the hypotheses imply that 
\[ w_i(E^+) = w_i(E^-) \quad\textnormal{for}\quad 1 \le i < 2^{n-1} \,
. \tag{*}
\]

We are interested in the Stiefel-Whitney classes of~$\rho (L - 1) = (L
E^+ + E^-) - (L E^- + E^+)$, and thus we wish to prove that 
\[ w_i(L E^+ + E^-) = w_i(L E^- + E^+) \, , \tag{**}  \]
for~$1 \le i \le 2^{n-1}$. Now, the Stiefel-Whitney classes of~$L E^+
+ E^-$ are given by evaluating certain universal polynomials, using
the classes of~$L$, $E^+$ and~$E^-$; moreover, these polynomials
depend only on the rank of the vector bundles involved. As a result,
it is clear from (*) that in degrees less than~$2^{n-1}$, we would obtain
the same result using~$LE^- + E^+$ instead. In other words, equation
(**) holds for~$1 \le i < 2^{n-1}$, and the only non-trivial calculation
happens for~$i=2^{n-1}$.

In this degree, we have 
\[ w_{2^{n-1}}(LE^+ + E^-) =  w_{2^{n-1}}(LE^+) + w_{2^{n-1}} (E^-)
+ R_{\pm} \, ,  \]
where the last term is 
\[ R_{\pm} = \sum_{p+q = 2^{n-1}} w_p(LE^+) w_q(E^-) \, .   \]
In this sum the indices~$p$ and~$q$ are positive; arguing as above, we
see that~$R_{\pm} = R_{\mp}$, where~$R_{\mp}$ is defined by
exchanging~$E^+$ and~$E^-$, that is 
\[ w_{2^{n-1}}(LE^- + E^+) =  w_{2^{n-1}}(LE^-) + w_{2^{n-1}} (E^+)
+ R_{\mp} \, .  \]
The Lemma will be proved if we can establish that~$w_{2^{n-1}}(LE^+) +
w_{2^{n-1}} (E^-) = w_{2^{n-1}}(LE^-) + w_{2^{n-1}} (E^+)$.

Let~$a= w_1(L)$. Then the~$2^{n-1}$-st Stiefel-Whitney class
of~$LE^\pm$ is given by
%
\[ w_{2^{n-1}}(LE^\pm) = a^{2^{n-1}} + w_{2^{n-1}}(E^\pm) + P(a,
w_1(E^\pm), \ldots, w_{2^{n-1} - 1}(E^\pm)) \, ,   \]
where~$P$ is a polynomial. So, using (*) again, we see indeed that~$w_{2^{n-1}}(LE^+) +
w_{2^{n-1}} (E^-)$ is left unchanged when~$E^+$ and~$E^-$ are
exchanged. This concludes the proof.
\end{proof}

This Lemma gives us at once the easy part of
Theorem~\ref{thm-theta-n}: namely, proving that~$w_i(\rho ) = 0$
for~$1 \le i < 2^{n-1}$ is now achieved with a routine induction.

To go further, let us write~$\sigma_k$ for the~$k$-th symmetric
function in~$L_1$, \ldots, $L_n$, computed in the ring~$KO(X)$. We also
write~$\sigma_0 = 1$, the trivial line bundle, and we understand
that~$\sigma_k = 0$ for~$k > n$. Put
\[ E^{even} = \sum_{k ~\textnormal{even}
} \sigma_k \quad\textnormal{and}\quad E^{odd} = \sum_{k ~\textnormal{odd}
} \sigma_k \, . 
\]
The virtual bundle~$\rho $ as in the statement of the Theorem is then 
\[ \rho = (-1)^n(E^{even} - E^{odd}) \, .   \]
We have already established that~$w_i(E^{even}) = w_i(E^{odd})$ for~$1
\le i < 2^{n-1}$. Note that~$E^{even}$ and~$E^{odd}$ both have
rank~$2^{n-1}$, and that~$E^{even}$ contains a copy of the trivial
line bundle~$\sigma_0$; as a result, we finally have~$w_{2^{n-1}}(\rho
) =  w_{2^{n-1}}(E^{odd})$. 

We can make this quite explicit. Indeed, if we put 
\[ m_k = \prod_{1 \le i_1 < i_2 < \cdots < i_k \le n} (t_{i_1} +
t_{i_2} + \cdots + t_{i_k})  \, , \]
then it is readily seen that 
\[ w_{2^{n-1}}(E^{odd}) = \prod_{k ~\textnormal{odd}  
} m_k \, .  \]

The following Lemma gives an expansion of the right hand side. It is
crucial to the proof of the Theorem, and indeed can be considered to
lie at the core of the paper.

\begin{lem} \label{lem-key}
In the polynomial ring~$\ft[t_1, \ldots, t_n]$, one has the following
identity:
\[ \prod_{k ~\textnormal{odd} } m_k = \sum_{2^{r_1} + \cdots + 2^{r_n}
= 2^{n-1}} t_1^{2^{r_1}} t_2 ^{2^{r_2}} \cdots t_n ^{2^{r_n}} \, .  \]
\end{lem}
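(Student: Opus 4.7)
The strategy is to establish that the right-hand side is divisible by every linear form $T_S := \sum_{i \in S} t_i$ for $S \subseteq \{1, \ldots, n\}$ of odd cardinality, and then conclude by a degree comparison. Write $A := \prod_{|S| \text{ odd}} T_S$ for the left-hand side and $Q$ for the right-hand side. Both are homogeneous of degree $2^{n-1}$ (the left-hand side because there are $\sum_{k \text{ odd}} \binom{n}{k} = 2^{n-1}$ linear factors), and $Q \ne 0$ since valid tuples $(r_1, \ldots, r_n)$ with $\sum 2^{r_i} = 2^{n-1}$ exist for every $n \ge 1$. The $T_S$ are pairwise coprime irreducibles in $\ft[t_1, \ldots, t_n]$, so once we know $T_S \mid Q$ for each odd $S$ we get $A \mid Q$, and then $Q = A$ because $\ft^\times = \{1\}$.

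To prove $T_S \mid Q$ for $S$ of odd size at least $3$, fix $j \in S$ and substitute $t_j = \sum_{i \in S \setminus \{j\}} t_i$ into $Q$. Since we are in characteristic $2$, this replaces each $t_j^{2^{r_j}}$ by $\sum_{l \in S \setminus \{j\}} t_l^{2^{r_j}}$, so every monomial $\prod_i t_i^{2^{r_i}}$ of $Q$ expands into the $|S| - 1$ monomials $t_l^{2^{r_l} + 2^{r_j}} \prod_{i \ne j, l} t_i^{2^{r_i}}$, indexed by $l \in S \setminus \{j\}$. For each fixed $l$, the involution swapping $r_j$ and $r_l$ is fixed-point-free on tuples with $r_j \ne r_l$ and leaves the resulting monomial invariant, so those contributions cancel in pairs. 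Only tuples with $r_j = r_l = r$ survive, contributing a factor $t_l^{2^{r+1}}$. A brief bookkeeping shows that the coefficient in $Q|_{T_S = 0}$ of a monomial $M = \prod_{i \ne j} t_i^{e_i}$—whose $e_i$ must all be powers of $2$ summing to $2^{n-1}$—equals $\#\{l \in S \setminus \{j\} : e_l \ge 2\}$ modulo $2$. The case $|S| = 1$ is trivial, since then $T_S = t_j$ divides every monomial of $Q$.

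The combinatorial core is that every such $M$ automatically has $e_i \ge 2$ for all $i \ne j$. If $k \ge 1$ of these $e_i$ equalled $1$, the remaining $n - 1 - k$ powers of $2$ would have to sum to $2^{n-1} - k$, which requires at least $s_2(2^{n-1} - k)$ summands; but the bit-complement identity $s_2(2^{n-1} - k) = (n-1) - s_2(k-1)$ combined with the obvious bound $s_2(k-1) < k$ for $k \ge 1$ yields $s_2(2^{n-1} - k) > n - 1 - k$, contradicting the requirement. Hence $\#\{l \in S \setminus \{j\} : e_l \ge 2\}$ equals $|S| - 1$, which is even since $|S|$ is odd, and so $Q|_{T_S = 0} = 0$. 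The main obstacle is this digit-sum lemma; once it is in hand, the rest of the argument is formal cancellation in characteristic $2$.
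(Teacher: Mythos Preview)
Your proof is correct and follows essentially the same route as the paper's: show that each odd-cardinality linear form $T_S$ divides the right-hand side via characteristic-$2$ cancellation together with the combinatorial fact that $n-1$ powers of $2$ summing to $2^{n-1}$ must all exceed~$1$, and then conclude by a degree comparison. The paper organizes the divisibility step a bit differently---it first shows that $P(t_1 \leftarrow t_i)$ is independent of~$i$ (using only the $k=1$ case of your digit-sum observation) and then exploits the additivity of $P$ in each variable to handle arbitrary odd~$S$---but the underlying ideas are identical.
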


For example for~$n=3$ this gives 
\[ t_1 t_2 t_3 (t_1 + t_2 + t_3) =  t_1^2 t_2 t_3 + t_1 t_2^2 t_3 +
t_1 t_2 t_3^2 \, .   \]
\begin{proof}
Let~$P$ denote the polynomial on the right hand side, given as a
sum. We write~$P(t_1 \leftarrow t_i)$ for~$P(t_i, t_2, t_3, \ldots,
t_n)$.

Consider a fixed term of the sum defining~$P(t_1 \leftarrow
t_i)$. If~$r_1 \ne r_i$, then the sum will contain another term
with~$r_1$ and~$r_i$ swapped; but this will be the same term
because~$t_1 = t_i$ now, so they will cancel. The only terms
remaining, for~$r_1 = r_i$, will add up to 
$$ \sum_{\begin{array}{c}
2^{r_2} + \cdots + 2^{r_n} = 2^{n-1} \\
r_i \ge 1
\end{array}
} t_2^{2^{r_2}} \cdots t_n^{2^{r_n}} \, .  $$
The reason for~$r_i \ge 1$ is that~$2^{r_i}$ has really
absorbed~$2^{r_1} + 2^{r_i} = 2\cdot 2^{r_i} = 2^{r_i + 1}$. However,
here is an elementary observation: $2^{n-1} - 1$ cannot be written as
the sum of~$n-2$ powers of~$2$. It follows that the condition~$r_i\ge
1$ is in fact automatically satisfied (otherwise~$2^{r_i} = 1$ and we
have written~$1 + 2^{r_2} + \cdots + 2^{r_n} = 2^{n-1}$ which is
impossible). Erase the condition~$r_i \ge 1$ in the sum above: the
result now blatently is independent of~$i$. So~$P(t_1 \leftarrow t_i)
= P(t_1 \leftarrow t_j)$, for any pair of indices~$i, j$.

In particular, we see that 
\[ P(t_1 \leftarrow t_i + t_j) = P(t_1 \leftarrow t_i) + P(t_1
\leftarrow t_j) = 0 \, .   \]
As a result the polynomial~$P$ is divisible by~$(t_1 + t_i + t_j)$. Of
course the variable~$t_1$ can be replaced by any other, and likewise
we see that~$P$ is divisible by~$(t_i + t_j + t_k)$ for any
triple~$(i, j, k)$. It is also easy to continue, for example 
\[ P(t_1 \leftarrow t_{i_1} + t_{i_2} + t_{i_3} + t_{i_4}) = P(t_1
\leftarrow t_{i_1} + t_{i_2}  )  + P(t_1 \leftarrow t_{i_3} +
t_{i_4}) = 0+ 0 = 0 \, ,  \]
so~$P$ is divisible by~$(t_1 + t_{i_1} + t_{i_2} + t_{i_3} +
t_{i_4})$. Pursuing the calculations in this fashion, we see that~$P$
is divisible by all the terms of~$m_k$ as long as~$k$ is
odd. These terms are coprime in the ring~$\ft[t_1, \ldots, t_n]$,
so~$P$ is divisible by their product, and a comparison of the degrees
gives the result.
\end{proof}

We can finally describe~$\theta_n$. We are going to rely on Milnor's
description of the dual~$\mathcal{A}^*$ of the Steenrod algebra,
see~\cite{milnorsteenrod}.  Recall that

(1) $\mathcal{A}^*$ is polynomial on variables~$\xi_i$ in
degree~$2^{i} - 1$.

(2) For any space~$X$, there is a map {\em of rings}
$$ \lambda^* \colon H^*(X) \to H^*(X) \otimes \mathcal{A}^* \, ,  $$
such that, for any Steenrod operation~$\theta $ and element~$x \in H^*
X$, we can recover~$\theta x$ by evaluating~$\lambda^*(x)$ at~$\theta $.

(3) For~$X= B\z/2$, whose cohomology is~$\f[t]$, one has 
$$ \lambda^*(t) = \sum t^{2^i} \otimes \xi_i \, .  $$

This allows the computation of~$\lambda^*(t_1 \cdots t_n)$ in our
situation. If we define~$$Sq(i_1, i_2, \ldots, i_k)$$ to be the
Steenrod operation dual to~$\xi_1^{i_1} \cdots \xi_k^{i_k}$, we
may put
$$ \theta_n = \sum Sq(i_1, \ldots , i_k)  $$
where the sum runs over all the elements which have degree~$2^{n-1} -
n$. We have then~$\theta_n(t_1 \cdots t_n) = P$, where~$P$ is again
the right-hand side in the identity of Lemma~\ref{lem-key}. This Lemma
thus asserts that 
\[ \theta_n(t_1 \cdots t_n) = w_{2^{n-1}} (\rho ) \, ,   \]
which concludes the proof of Theorem~\ref{thm-theta-n}. 

\begin{rmk}
(a) The reader may compute~$\theta_n$ easily using the free computer
algebra system {\sc Sage}, simply by entering

\definecolor{gris}{rgb}{0.8,0.8,0.8}
\lstset{basicstyle= \footnotesize, frame=single, backgroundcolor= \color{gris}}
\begin{lstlisting}
sage: A= SteenrodAlgebra(2, 'milnor')
sage: theta= lambda n: sum(A.basis(2^(n-1)-n)).basis('serre-cartan')
\end{lstlisting}

\noindent Subsequent calls to {\tt theta(n)} will give the value of~$\theta_n$,
in terms of the~$Sq^k$'s. The examples given in the Introduction were
computed in this way.

(b) The operations~$\theta_n$ are not uniquely defined by the
requirement that they satisfy Theorem~\ref{thm-theta-n}. However with
the above definition, the operations coincide, as announced in the
Introduction, with those considered by Benson and Franjou
in~\cite{franjou}, by Kuhn in~\cite{kuhn}, and by Adams
in~\cite{adams}. Furthermore, we want to point out the following
alternative description. The Steenrod algebra~$\mathcal{A}$ is a Hopf
algebra, and is equipped with an antipode~$c\colon \mathcal{A} \to
\mathcal{A}$. It turns out that
\[ \theta_n = c( Sq^{2^{n-1} - n}) \, ,   \]
see \S7, Corollary 6 in~\cite{milnorsteenrod}. 

\end{rmk}

\subsection{The ideal}

Given any graded algebra~$A^*$, we write~$A^*_{dec}$ for the
subalgebra generated by the elements of degree~$1$. 

\begin{prop}
For any topological space~$X$, let us put 
\[ I_{dec} = \{ x \in H^*(X)_{dec} : \theta_{|x|} (x) = 0 \} \, . 
\]
Then~$I_{dec}$ is an ideal in~$H^*(X)_{dec}$.
\end{prop}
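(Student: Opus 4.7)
The plan is to lift each element $x \in H^n(X)_{dec}$ to a virtual bundle $\rho_x \in KO(X)$ whose low-degree Stiefel-Whitney classes realize $\theta_n$, and then exploit Lemma~\ref{lem-rho-tens-L-minus-one} to handle multiplication by degree-one classes.

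First I would define $\rho_x$ by writing $x = \sum_\alpha t_{\alpha,1} \cdots t_{\alpha,n}$ as a sum of products of classes in $H^1(X)$ and, with $L_{\alpha,j}$ the line bundle corresponding to $t_{\alpha,j}$, putting
\[ \rho_x \;=\; \sum_\alpha (L_{\alpha,1}-1)(L_{\alpha,2}-1)\cdots(L_{\alpha,n}-1) \in KO(X)\,. \]
Although $\rho_x$ depends on the chosen presentation, the Stiefel-Whitney classes that matter do not. Indeed, $w_T$ is multiplicative on sums in $KO(X)$, and by Theorem~\ref{thm-theta-n} each summand has total class of the form $1 + \theta_n(t_{\alpha,1}\cdots t_{\alpha,n})\,T^{2^{n-1}} + \cdots$. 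Any cross term in the product contributing to degree $i \le 2^{n-1}$ would require a factorization $p+q=i$ with $p,q\ge 1$, forcing both $p,q < 2^{n-1}$, so the relevant Stiefel-Whitney classes vanish. Hence $w_i(\rho_x)=0$ for $1\le i < 2^{n-1}$, while $w_{2^{n-1}}(\rho_x) = \theta_n(x)$. Observe also that if $t \in H^1(X)$ corresponds to $L$, then multiplying every monomial of $x$ by $t$ appends a factor $(L-1)$, so with the obvious presentation $\rho_{xt} = \rho_x\cdot(L-1)$.

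The proposition now drops out. Additive closure of $I_{dec}$ is immediate grade by grade, since each $\theta_n$ is linear. For multiplicative closure, because $H^*(X)_{dec}$ is generated by $H^1(X)$, it suffices to show that $xt \in I_{dec}$ whenever $x \in I_{dec}$ is homogeneous of degree $n$ and $t \in H^1(X)$. The hypothesis $\theta_n(x)=0$ combined with the preceding paragraph gives $w_i(\rho_x)=0$ for $1 \le i \le 2^{n-1}$; Lemma~\ref{lem-first-sw} then extends this to $1 \le i < 2^n$, since the first nonzero Stiefel-Whitney class must live in a power-of-two degree. This is exactly the hypothesis of Lemma~\ref{lem-rho-tens-L-minus-one} with $n$ replaced by $n+1$ (and $\rho_x$, having rank zero, can certainly be written as a difference of bundles of equal rank), whose conclusion yields $w_i\bigl(\rho_x(L-1)\bigr) = 0$ for $1 \le i < 2^{n+1}$. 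In particular $\theta_{n+1}(xt) = w_{2^n}(\rho_{xt}) = 0$, so $xt \in I_{dec}$.

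The single delicate step is the additivity claim that sets up $\rho_x$, namely the verification that, for an arbitrary presentation of $x$ as a sum of monomials in degree-one classes, all cross terms in the product of total Stiefel-Whitney classes vanish through degree $2^{n-1}$. Once this combinatorial check is made, the ideal property is a direct application of Lemmas~\ref{lem-first-sw} and~\ref{lem-rho-tens-L-minus-one}.
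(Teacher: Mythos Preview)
Your proof is correct and follows essentially the same route as the paper's: construct a virtual bundle~$\rho_x$ from a monomial presentation of~$x$, use Theorem~\ref{thm-theta-n} together with the additivity observation to get $w_i(\rho_x)=0$ for $i<2^{n-1}$ and $w_{2^{n-1}}(\rho_x)=\theta_n(x)$, then apply Lemma~\ref{lem-first-sw} and Lemma~\ref{lem-rho-tens-L-minus-one} (with~$n+1$ in place of~$n$) to conclude. Your treatment is in fact slightly more explicit than the paper's on the additivity step and on the identification $\rho_{xt}=\rho_x(L-1)$, but the argument is the same.
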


Here~$|x|$ is the degree of~$x$.

\begin{proof}
Let~$n=|x|$, and assume that~$\theta_n(x)=0$. It is enough to prove
that, for~$y$ of degree~$1$, we have~$\theta_{n+1}(xy) = 0$. 

The element~$x$ is a sum of products of~$n$ elements of degree~$1$,
and by applying Theorem~\ref{thm-theta-n} several times, we see that
there exists some virtual vector bundle~$\rho $ over~$X$ such
that~$\theta_n(x) = w_{2^{n-1}}(\rho )$, while~$w_i(\rho ) = 0$ for~$1
\le i < 2^{n-1}$. Here we rely on the observation that, if~$\rho_1$
and~$\rho_2$ are vector bundles such that the Stiefel-Whitney classes
of {\em both}~$\rho_1$ and~$\rho_2$ vanish in degrees less
than~$2^{n-1}$, then the same can be said of the sum~$\rho_1 +
\rho_2$, and moreover~$w_{2^{n-1}}(\rho_1 + \rho_2) =
w_{2^{n-1}}(\rho_1) + w_{2^{n-1}}(\rho_2)$. Our assumption on~$x$ implies
thus that~$w_{2^{n-1}}(\rho )= 0$, so that, by
Lemma~\ref{lem-first-sw}, we have actually~$w_i(\rho ) = 0$ for~$1 \le
i < 2^n$.

A similar reasoning with~$xy$ shows that~$\theta_{n+1}(xy) =
w_{2^n}(\rho (L-1))$, where~$L$ is the line bundle defined
by~$y$. Lemma~\ref{lem-rho-tens-L-minus-one} guarantees that this
Stiefel-Whitney class vanishes, thus establishing the Proposition. 
\end{proof}

We can make a stronger statement using the ``splitting
principle''. This classical result states that, given a finite number
of vector bundles~$E_1, \ldots, E_k$ over~$X$, one can find a
space~$Y$ with a map~$p\colon Y\to X$, such that

(1) the map~$p^* \colon H^*(X) \to H^*(Y)$ is injective, and

(2) each of~$E_1, \ldots, E_k$ splits as a sum of line bundles, when
pulled-back over~$Y$.
In particular, the classes~$p^*(w_i(E_j))$ all belong
to~$H^*(Y)_{dec}$. From this we obtain the following.

\begin{coro} \label{coro-ideal-sw}
Let~$\W^*(X)$ denote the subring of~$H^*(X)$ generated by
Stiefel-Whitney classes. Put 
\[ \id_X = \{ x \in \W^*(X) : \theta_{|x|}(x) = 0 \} \, . 
\]
Then~$\id_X$ is an ideal in~$\W^*(X)$.
\end{coro}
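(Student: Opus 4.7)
The plan is to reduce the corollary to the preceding Proposition via the splitting principle. Let $x \in \id_X$ and $y \in \W^*(X)$; I want to show $\theta_{|xy|}(xy) = 0$, which is the only nontrivial thing to check (absorption of additive inverses is immediate in characteristic $2$, and stability under addition of equal-degree elements follows from $\theta_n$ being additive).

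First, I would observe that since $x$ and $y$ are polynomials in Stiefel-Whitney classes, there exist finitely many vector bundles $E_1, \ldots, E_k$ over $X$ such that both $x$ and $y$ are polynomials in the classes $w_j(E_i)$. Apply the splitting principle to this finite family: obtain a map $p \colon Y \to X$ such that $p^*$ is injective on mod $2$ cohomology and such that each $p^*(E_i)$ decomposes as a sum of line bundles on $Y$. Then each class $p^*(w_j(E_i))$ is an elementary symmetric polynomial in the first Stiefel-Whitney classes of those line bundles, and therefore lies in $H^*(Y)_{dec}$. Consequently $p^*(x)$ and $p^*(y)$ both lie in $H^*(Y)_{dec}$.

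Next I would use the fact that $p^*$ is a ring map commuting with Steenrod operations, so that
\[ \theta_{|x|}(p^*(x)) = p^*(\theta_{|x|}(x)) = p^*(0) = 0 . \]
This means $p^*(x) \in I_{dec}(Y)$ in the notation of the preceding Proposition. Since $I_{dec}(Y)$ was shown to be an ideal in $H^*(Y)_{dec}$, we get
\[ p^*(xy) = p^*(x) \cdot p^*(y) \in I_{dec}(Y) , \]
so $\theta_{|xy|}(p^*(xy)) = 0$. Pulling the Steenrod operation outside again yields $p^*(\theta_{|xy|}(xy)) = 0$, and injectivity of $p^*$ forces $\theta_{|xy|}(xy) = 0$, as desired.

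The main step is really just organizing the splitting principle so that it applies to both $x$ and $y$ simultaneously; once this is done, the whole statement is a transparent corollary of the Proposition. There is no genuine obstacle, only the mild bookkeeping point that a finite collection of bundles suffices (because $x$ and $y$ involve only finitely many Stiefel-Whitney classes), so the splitting principle in its standard form is applicable.
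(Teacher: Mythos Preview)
Your proof is correct and follows exactly the approach the paper intends: reduce to the Proposition on $H^*(Y)_{dec}$ via the splitting principle, then use naturality of Steenrod operations and injectivity of $p^*$ to pull the conclusion back to $X$. The paper merely sketches this in a sentence, while you have spelled out the details carefully.
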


The canonical subquotient~$\W^*(X)/\id_X$ of~$H^*(X)$ will be one of
our chief interests in the rest of the paper. The following Lemma
states some of its easy but useful properties.

\begin{lem} \label{lem-preserves-injectivity}
Let~$f\colon Y \to X$ be any map. Then

(1) We have~$f^*(\W^*(X)) \subset f^*(\W^*(Y))$ and~$f^*(\id_X)
\subset f^*(\id_Y)$, so that there is an induced map~$ f^\sharp \colon
\W^*(X)/\id_X \to \W^*(Y) / \id_Y$.

(2) If~$f^*$ is injective, so is~$f^\sharp$.
\end{lem}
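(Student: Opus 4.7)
The proof will rest entirely on two instances of naturality: that of Stiefel--Whitney classes with respect to pullback of vector bundles, and that of Steenrod operations (in particular, of each $\theta_n$) with respect to continuous maps.

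For part (1), the plan is to observe first that $\W^*(X)$ is generated by classes of the form $w_i(E)$ with $E$ a real vector bundle over $X$, and that $f^*(w_i(E)) = w_i(f^*E) \in \W^*(Y)$; since $f^*$ is a ring homomorphism, this gives $f^*(\W^*(X)) \subset \W^*(Y)$. Next, given $x \in \id_X \subset \W^*(X)$, set $n = |x|$, so that $\theta_n(x) = 0$. By naturality of the Steenrod operation $\theta_n$, one has
\[ \theta_n(f^* x) = f^*(\theta_n(x)) = 0, \]
and since $f^* x \in \W^*(Y)$ is of degree $n$, this means $f^* x \in \id_Y$. Hence $f^*$ descends to a well-defined graded ring homomorphism $f^\sharp \colon \W^*(X)/\id_X \to \W^*(Y)/\id_Y$.

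For part (2), suppose $f^*$ is injective, and take $\bar x \in \W^*(X)/\id_X$ with $f^\sharp(\bar x) = 0$. Lift $\bar x$ to $x \in \W^*(X)$ of degree $n$; then $f^* x \in \id_Y$, so $\theta_n(f^* x) = 0$. Again by naturality, $f^*(\theta_n(x)) = \theta_n(f^* x) = 0$, and the hypothesis that $f^* \colon H^*(X) \to H^*(Y)$ is injective forces $\theta_n(x) = 0$. Thus $x \in \id_X$ and $\bar x = 0$.

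There is no real obstacle here; the only point worth double-checking is that each $\theta_n$, as constructed in the previous subsection from the dual Steenrod algebra, is an honest element of $\mathcal{A}$ and therefore a natural transformation of cohomology functors, so that the identity $\theta_n \circ f^* = f^* \circ \theta_n$ holds without condition on $f$. This is automatic from the definition, so the argument is essentially a formal verification.
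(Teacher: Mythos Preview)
Your proof is correct and follows the same approach as the paper's: both parts reduce to naturality of Stiefel--Whitney classes and of the Steenrod operations~$\theta_n$, and for (2) one uses $\theta_n(f^*x)=f^*(\theta_n x)=0$ together with injectivity of~$f^*$ to conclude $x\in\id_X$. The paper's argument is just a terser version of yours.
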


\begin{proof}
Property (1) is obvious given the naturality of both Stiefel-Whitney
classes and Steenrod operations. We turn to (2).

Suppose that~$f^\sharp(x) = 0$, so that~$f^*(x) \in \id_Y$. By
definition, this means that~$\theta_n (f^*(x)) = 0$, where~$n$ is
the degree of~$x$. However, we have then~$f^*(\theta_n(x)) = 0$ so
that~$\theta_n(x) = 0$ by injectivity of~$f^*$, and we see that~$x\in
\id_X$. 
\end{proof}

In what follows we shall be particularly interested in the case~$X=
BG$, the classifying space of the group~$G$. In this situation we will
use notation such as~$H^*(G)$, $\W^*(G)$ and~$\id_G$. Here is a first
example.

\begin{prop} \label{prop-ideal-elementary-abelian} Let~$G= (\z/2)^r$
  be elementary abelian of rank~$r$. Then the cohomology ring~$H^*(G)
  =  \W^*(G) = \ft[t_1, \ldots, t_r] $ is polynomial, and the
  ideal~$\id_G$ is generated by the elements~$t_i^2t_j + t_i t_j^2$,
  for~$1 \le i, j \le r$.
\end{prop}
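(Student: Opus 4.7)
The identification $H^*(G) = \ft[t_1, \ldots, t_r]$ is classical, and each generator $t_i$ is the first Stiefel-Whitney class of the real line bundle arising from the $i$-th coordinate projection $G \to \z/2$; hence every generator of $H^*(G)$ lies in $\W^*(G)$, giving $\W^*(G) = H^*(G)$. The easy half, that each proposed generator lies in $\id_G$, is immediate: since $\id_G$ is an ideal by Corollary~\ref{coro-ideal-sw}, it suffices to check each generator, and in degree $3$ one has $\theta_3 = Sq^1$, so the Cartan formula together with $Sq^1(t_i^2) = 0$ yields $Sq^1(t_i^2 t_j + t_i t_j^2) = t_i^2 t_j^2 + t_i^2 t_j^2 = 0$.

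Denote by $J$ the ideal generated by the elements $t_i^2 t_j + t_i t_j^2$; I want to prove $\id_G \subseteq J$. Both ideals are graded, so fix a degree $n \ge 1$. The relation $t_p^2 t_q \equiv t_p t_q^2 \pmod J$ lets one transfer a unit of exponent between any two variables provided neither drops to zero, so every monomial of degree $n$ with nonempty support $S \subseteq \{1, \ldots, r\}$ is congruent, modulo $J$, to the chosen representative $m_{S,n} := t_{i_1}^{n - |S| + 1} t_{i_2} \cdots t_{i_{|S|}}$, where $S = \{i_1 < \cdots < i_{|S|}\}$. On the other hand, each generator of $J$ is a sum of monomials sharing a common support, so $J$ decomposes according to the support-grading of $\ft[t_1, \ldots, t_r]$, and the classes $[m_{S,n}]$ for nonempty $S$ with $|S| \le n$ span the degree-$n$ part of the quotient. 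Proving $\id_G \subseteq J$ in degree $n$ thus reduces to showing that the family $\{\theta_n(m_{S,n})\}$ is $\ft$-linearly independent in $H^*(G)$.

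I will rely on two naturality arguments. First, the inclusion $(\z/2)^{S_0} \hookrightarrow (\z/2)^r$ that puts the identity in coordinates outside $S_0$ induces on cohomology the substitution $t_i \mapsto 0$ for $i \notin S_0$; applied to $\theta_n(m_{S,n})$, this commutes with $\theta_n$ and returns $\theta_n(m_{S,n})$ when $S \subseteq S_0$ and zero otherwise. Second, the diagonal homomorphism $\z/2 \to (\z/2)^r$, $g \mapsto (g, g, \ldots, g)$, induces $t_i \mapsto t$ for every $i$, so $\theta_n(m_{S,n})$ maps to $\theta_n(t^n)$; setting all $t_i = t$ in the identity of Lemma~\ref{lem-key}, the factored form $\prod_{k \textnormal{ odd}} m_k$ collapses to $\prod_{k \textnormal{ odd}} t^{\binom{n}{k}} = t^{\sum_{k \textnormal{ odd}} \binom{n}{k}} = t^{2^{n-1}}$, which is nonzero. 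In particular $\theta_n(m_{S,n}) \ne 0$ for every admissible $S$.

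The linear independence of $\{\theta_n(m_{S,n})\}$ then follows by induction on $|S_0|$. Given a relation $\sum_S c_S \theta_n(m_{S,n}) = 0$, substituting $t_i \mapsto 0$ for $i \notin S_0$ yields $\sum_{S \subseteq S_0} c_S \theta_n(m_{S,n}) = 0$; the inductive hypothesis forces $c_S = 0$ for $S \subsetneq S_0$, leaving $c_{S_0} \theta_n(m_{S_0, n}) = 0$, whence $c_{S_0} = 0$ by the nonvanishing just established. I expect the heart of the argument, and its only nontrivial step, to be the collapse $\theta_n(t^n) = t^{2^{n-1}}$ under the diagonal substitution: once Lemma~\ref{lem-key} reduces the computation to the classical identity $\sum_{k \textnormal{ odd}} \binom{n}{k} = 2^{n-1}$, the remaining linear algebra is routine.
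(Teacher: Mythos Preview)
Your proof is correct and follows essentially the same architecture as the paper's: the same monomial representatives $m_{S,n}$ modulo $J$, the same reduction to linear independence of the $\theta_n(m_{S,n})$, and the same use of the restrictions $t_i \mapsto 0$ for $i \notin S_0$. The one genuine difference is in the nonvanishing step: the paper specializes all variables in $T$ except one to $1$ and reads off a power of $t_j$ from the factored form in Lemma~\ref{lem-key}, whereas you send all $t_i$ to a single $t$ via the diagonal and collapse $\prod_{k\ \textnormal{odd}} m_k$ to $t^{2^{n-1}}$ using $\sum_{k\ \textnormal{odd}} \binom{n}{k} = 2^{n-1}$; your version is arguably cleaner and has the bonus of computing $\theta_n(t^n)$ explicitly.
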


Of course the statement about~$H^*(G)$ is classical. We point out that
an equivalent formulation of the Proposition is that~$\id_G$ is
generated as an ideal by the kernel of~$Sq^1 = \theta_3$ viewed as a
map~$H^3(G)\to H^4(G)$.

\begin{proof}
We have 
\[ Sq^1( t_i^2 t_j + t_i t_j^2) = t_i^2 t_j^2 + t_i^2  t_j^2 = 0 \,
,   \]
so certainly~$t_i^2 t_j + t_i t_j^2 \in \id_G$, as~$\theta_3 =
Sq^1$. Let~$J$ be the ideal generated by these elements, and let us
prove that~$\id_G = J$. We have a succession of surjective maps 
\[ \ft[t_1, \ldots, t_r] \longrightarrow H^*(G)/J \longrightarrow
H^*(G)/\id_G \, .   \]

Fix an interger~$n$. It is clear that any two monomials in~$H^n(G)$
are equal modulo~$J$ as soon as they are written with the same
``alphabet'', ie if they involve exactly the same variables. In order
to exploit this, for any non-empty subset~$S$ of~$\{ 1, 2, \ldots, r \}$ of
cardinality~$\le n$ we pick a monomial~$x_S \in H^n(G)$ which involves
the variables~$t_i$ for~$i \in S$ and no other; we also arrange so
that only one variable is raised to a power greater
than~$1$. Concretely, if~$S = \{ i_1, \ldots, i_s \}$, we may
take~$x_S = t_{i_1}^{n-s+1} t_{i_2} \cdots t_{i_s}$. We have observed
that the elements~$x_S$, for such subsets~$S$, generate~$H^n(G)/J$, and
we shall prove now that they are linearly independent even
in~$H^n(G)/\id_G$. It will follow that these elements form a basis of
both~$H^n(G)/J$ and~$H^n(G)/\id_G$, and this being true for all~$n$,
we are compelled to conclude that~$\id_G = J$.

Examining the definitions, we see that we must establish that the
elements~$\theta_n(x_S)$ are linearly independent
in~$H^{2^{n-1}}(G)$. So let us assume that we are given a zero linear
combination, say 
\[ \sum_S \, \lambda_S\, \theta_n (x_S) = 0 \, .  \tag{*} \]
Fix a~$T$, and let us prove that~$\lambda_T = 0$.  The identity (*)
takes place in a polynomial ring, so it is possible for us to set~$t_i
= 0$ for~$i \not\in T$. From Lemma~\ref{lem-key}, we see that~$x_S$
and~$\theta_n(x_S)$ are written in the same ``alphabet'', and so the
new identity we obtain only involves those subsets~$S$ such that~$S
\subset T$. What is more, the element~$\theta_n(x_T)$ is the only one
whose monomials involve {\em all} the variables~$t_i$ for~$i\in T$. It
is thus clear that we have~$\lambda_T \theta_n(x_T) = 0$.

It remains to prove that~$\theta_n(x_T) \ne 0$. From the construction
of~$x_T$, we know that there is some variable, say~$t_{j}$, which
appears in~$x_T$ to the power one. Now set~$t_i = 1$ for~$i\in T$
if~$i \ne j$ (while maintaining~$t_i= 0$ for~$i\not\in T$), and let us
prove that~$\theta_n(x_T)$ is non-zero even then. Indeed, from the
factorized expression for~$\theta_n(x_T)$ appearing in
Lemma~\ref{lem-key}, we know that this element is a product of terms
of the form
\[ m_k = (t_{i_1} + \cdots + t_{i_k})  \]
where~$k$ is odd, the indices~$i_1, \ldots, i_k$ are in~$T$, and at
most one of them is~$j$. If there is no such index, then~$m_k = 1$
after the variables have been set to one; if there is such an
index, then~$m_k = t_j$. As a result~$\theta_n(x_T)$ is a power
of~$t_j$, and in particular it is not zero. It follows that~$\lambda_T
= 0$ and the proof is complete.
\end{proof}

\begin{rmk}
This Proposition can also be deduced from Proposition 5.10
in~\cite{kuhn}.  
\end{rmk}

\subsection{The universal example of~$\U$}

We will now compute the ``universal'' ideal~$\id_N:= \id_{\U} $ for the
space~$\U$ to be described next. This will give relations in
the cohomology of any space.

Consider first the classifying space~$BO_n$ of the~$n$-th compact
orthogonal group~$O_n$; its cohomology is 
\[ H^*(BO_n) = \ft[w_1, w_2, \ldots, w_n] \, .   \]
There are inclusions maps~$O_n \to O_{n+1}$, inducing maps~$BO_n \to
BO_{n+1}$, and we take~$BO_\infty$ to be the colimit of this
diagram. We have thus 
\[ H^*(BO_\infty) = \lim H^*(BO_n) = \ft[w_1, w_2, w_3, \ldots ] \, .   \]
Here we think of~$w_i$ as a generalized Stiefel-Whitney class, and
indeed Corollary~\ref{coro-ideal-sw} applies with~$\W^*(BO_\infty) =
H^*(BO_\infty)$, as we readily see.

Next, take an integer~$N$ and consider the cartesian product~$\U=
(BO_\infty)^N$. This space has projection maps~$p_i\colon \U \to
BO_\infty$, and we write~$w_j(p_i)$ for the class~$p_i^*(w_j)$. The
cohomology of~$\U$ is polynomial in all the classes~$w_j(p_i)$, for~$1
\le i \le N$ and~$j\ge 1$. In particular~$H^*(\U) = \W^*(\U)$ (in the
same generalized sense as above).

Given any space~$X$ and vector bundles~$E_1$, $E_2$, \ldots, $E_N$
over~$X$, we have classifying maps~$f_i \colon X \to BO_\infty$,
which we may combine into a map~$X \to \U$. The Stiefel-Whitney
classes of the bundle~$E_i$ are pulled back from the
classes~$w_j(p_i)$, so the relations in~$\id_N$ will yield relations
in~$\id_X$.

Let us now state the result. It is somehow easier to give a
presentation for~$H^*(\U)/\id_N$.

\begin{prop}
The ring~$H^*(\U)/\id_N$ is generated by the classes~$w_{2^j}(p_i)$,
for~$1\le i \le N$ and~$j \ge 0$. These are subject to the relations 
\[ w_{2^{j_1}}(p_{i_1}) \cdots w_{2^{j_s}}(p_{i_s}) =
w_{2^{k_1}}(p_{\ell_1}) \cdots w_{2^{k_t}}(p_{\ell_t}) \tag{*}\]
whenever the two sides of (*) have the same degree and involve exactly
the same variables.

Moreover, for any integer~$n$, let the binary expansion of~$n$ be 
\[ n = \sum_{k \ge 0} \, a_k 2^k \, .   \]
Then the class~$w_n(p_i)$ is given in~$H^*(\U)/\id_N$ by 
\[ w_n(p_i) = \prod_{k \ge 0} \, w_{2^k}(p_i)^{a_k} \, . \tag{**}  \]
\end{prop}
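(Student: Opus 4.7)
The plan is to use the splitting principle to inject $H^*(\U)/\id_N$ into the cohomology of a classifying space of an elementary abelian $2$-group, where both the ideal and a natural basis are explicitly known by Proposition~\ref{prop-ideal-elementary-abelian}. The identities (*) and (**) can then be verified as concrete mod-$\id$ computations with elementary symmetric polynomials, after which a dimension count closes out the completeness of the presentation.

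First I would set up the splitting. For a large integer $M$, let $Y_M = \prod_{i=1}^N (B\z/2)^M \cong B(\z/2)^{NM}$, and define $f_M \colon Y_M \to \U$ to be the map which on the $i$-th factor classifies the Whitney sum of the $M$ canonical line bundles. In cohomology $f_M^*$ sends $w_j(p_i)$ to the $j$-th elementary symmetric polynomial $\sigma_j$ in the line-bundle classes $t^{(i)}_1, \ldots, t^{(i)}_M$. In any fixed cohomological degree $n$, once $M \ge n$ the map $f_M^*$ is injective, so by Lemma~\ref{lem-preserves-injectivity}(2) the induced map $f_M^\sharp \colon H^*(\U)/\id_N \to H^*(Y_M)/\id_{Y_M}$ is injective in degree $\le n$. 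By Proposition~\ref{prop-ideal-elementary-abelian}, $\id_{Y_M}$ is generated by the binomials $u^2 v + u v^2$ for $u, v$ among the $t^{(i)}_a$; hence modulo $\id_{Y_M}$ a monomial in the $t^{(i)}_a$ is determined by its total degree and its alphabet (the set of variables appearing in it), yielding an explicit $\ft$-basis of each graded piece indexed by these alphabets.

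Next I would verify (*) and (**) after transferring them through $f_M^\sharp$. For (**), the content is that $\sigma_n(t^{(i)}_1, \ldots, t^{(i)}_M) \equiv \prod_{k \ge 0} \sigma_{2^k}(t^{(i)}_1, \ldots, t^{(i)}_M)^{a_k}$ modulo $\id_{Y_M}$ whenever $n = \sum_k a_k 2^k$. Both sides reduce to linear combinations $\sum_S c_S\, x_S$ indexed by alphabets $S$, and the identity translates into a mod-$2$ counting statement asserting that the number of ``factorizations'' of an alphabet $S$ into squarefree pieces of sizes $2^k$ with prescribed multiplicities $a_k$ matches the indicator that $|S| = n$. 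The relations (*) follow from a similar alphabet-level bookkeeping once (**) is in hand. To conclude that no further relations are needed, I would introduce the abstract commutative graded $\ft$-algebra $R$ presented by generators $\tilde w_{2^j}(p_i)$ and relations (*), observe the canonical surjection $R \twoheadrightarrow H^*(\U)/\id_N$, and compute $\dim_\ft R_n$ combinatorially via equivalence classes of monomials indexed by their support sets in $\{w_{2^j}(p_i)\}$. Matching these classes to their images in $H^*(Y_M)/\id_{Y_M}$ via the explicit basis of Proposition~\ref{prop-ideal-elementary-abelian} shows linear independence, forcing $R \xrightarrow{\sim} H^*(\U)/\id_N$.

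The principal obstacle is the combinatorial identity underpinning (**): that $\sigma_n$ and $\prod_k \sigma_{2^k}^{a_k}$ agree modulo the ideal generated by $u^2 v + u v^2$. This is where the binary expansion of $n$ plays its essential role, echoing the arithmetic observation (that $2^{n-1} - 1$ cannot be written as a sum of $n-2$ powers of $2$) that drove Lemma~\ref{lem-key}. Once this identity is established, both (*) and the dimension count rest on analogous, though more elaborate, parity-counting arguments organized by alphabets.
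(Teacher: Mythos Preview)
Your setup is exactly the paper's: restrict to the diagonal torus $T=(\z/2)^M$ (or its $N$-fold product), use Lemma~\ref{lem-preserves-injectivity} to identify $\id_N$ with the kernel of $H^*(\U)\to H^*(T)/\id_T$, and use the alphabet basis of Proposition~\ref{prop-ideal-elementary-abelian}. So there is no divergence of strategy.

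The gap is precisely the ``principal obstacle'' you flag. The paper does not attack the identity $\sigma_n\equiv\prod_k\sigma_{2^k}^{a_k}$ head-on by counting factorizations of an alphabet; instead it computes the pairwise multiplication rule once and for all. Writing $t_{I,n}$ for the basis element with alphabet $I$ in degree $n$, and $t_{i,n}=\sum_{|I|=i}t_{I,n}$, the image of $w_i$ is $t_{i,i}$, and one has
\[
t_{i,i}\,t_{j,j}=\sum_k\alpha_{i,j,k}\,t_{k,i+j},
\]
where $\alpha_{i,j,k}$ counts pairs $(I,J)$ with $|I|=i$, $|J|=j$, $I\cup J$ a fixed $k$-set. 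Elementary combinatorics gives $\alpha_{i,j,k}=\binom{k}{i}\binom{i}{k-j}$, and Kummer's criterion ($\binom{n}{m}$ is odd iff $\supp(m)\subset\supp(n)$ in binary) shows this is odd for exactly one value of $k$, namely the bitwise OR $k=i\ \textsc{or}\ j$. Hence
\[
w_{i_1}\cdots w_{i_s}\ \longmapsto\ t_{\,i_1\,\textsc{or}\,\cdots\,\textsc{or}\,i_s,\ i_1+\cdots+i_s}.
\]
From this single formula both (*) and (**) are immediate (for (**), the OR of distinct powers of $2$ summing to $n$ is $n$ itself), and the linear independence of the $t_{i,n}$ for distinct $i$ gives the completeness of the presentation without a separate dimension count. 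Your allusion to Lemma~\ref{lem-key} is a red herring here; the relevant arithmetic is Kummer's theorem, not the power-of-two decomposition used for $\theta_n$.
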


In other words, the ideal~$\id_N$ is generated by the relations (*)
and (**). It follows that, for any vector bundle~$E$ over any
space~$X$ whatsoever, we will always have such relations as~$w_5(E) =
w_1(E) w_4(E)$ modulo~$\id_X$ (relation of type (**)); and if~$F$ is
any other vector bundle over~$X$, the relation~$w_1(E) w_2(F)^2 =
w_1(E)^3 w_2(F)$ modulo~$\id_X$ always holds (relation of type (*)).

Let us also state a general consequence.

\begin{coro} \label{coro-bounded}
  Let~$X$ be a space such that~$\W^*(X)$ is generated by finitely many
  Stiefel-Whitney classes. Then there exists a constant~$C$ such that
\[ \dim_{\ft} \W^n (X)/ \id_X \le C \]
for all~$n\ge 0$.
\end{coro}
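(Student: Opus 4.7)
The plan is to deduce the bound from the explicit description of $H^*(\U)/\id_N$ given in the preceding Proposition, by pulling back along appropriate classifying maps.

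By hypothesis, $\W^*(X)$ is generated by the Stiefel-Whitney classes of finitely many vector bundles $E_1,\ldots,E_N$ on $X$, of ranks $n_1,\ldots,n_N$. Their combined classifying maps yield a single map $f\colon X \to \U$. By Lemma~\ref{lem-preserves-injectivity} we obtain an induced ring homomorphism
\[
f^\sharp\colon H^*(\U)/\id_N \longrightarrow \W^*(X)/\id_X,
\]
which is surjective because every generator $w_j(E_k) = f^*(w_j(p_k))$ of $\W^*(X)$ lies in its image.

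According to the preceding Proposition, $H^*(\U)/\id_N$ is spanned in each degree by monomials in the classes $w_{2^l}(p_k)$, and relation~(*) in that Proposition shows that any two such monomials of the same degree involving the same set of variables coincide. Now since $w_j(E_k) = 0$ in $H^*(X)$ whenever $j > n_k$, we have $f^*(w_{2^l}(p_k)) = 0$ as soon as $2^l > n_k$. Hence $f^\sharp$ annihilates every monomial involving a variable $w_{2^l}(p_k)$ with $2^l > n_k$, and the image of $f^\sharp$ is spanned, in each degree, by the classes of monomials built from the finite set
\[
V := \{w_{2^l}(p_k) : 1 \le k \le N,\ 2^l \le n_k\}.
\]
By relation~(*), these classes are parametrized in each degree by the subsets of $V$ that they involve, so their number is at most $2^{|V|}$, independent of $n$. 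Setting $C = 2^{|V|}$, with $|V| = \sum_{k=1}^N (\lfloor \log_2 n_k \rfloor + 1)$, yields the claimed bound.

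The argument is essentially bookkeeping; the main point to verify is that the hypothesis ``finitely many Stiefel-Whitney classes'' cuts the a priori infinite generating set of $H^*(\U)/\id_N$ down to the finite set $V$, after which relation~(*) provides an automatic, $n$-independent upper bound on the number of monomial classes.
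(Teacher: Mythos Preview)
Your proof is correct and follows essentially the same approach as the paper. The paper's one-sentence argument (``there are no more monomials in the Stiefel-Whitney classes in a given degree than ways of picking a subset of the set of variables'') is exactly your final observation that relation~(*) bounds the number of monomial classes in each degree by~$2^{|V|}$; you have simply made explicit the classifying map to~$\U$, the use of Lemma~\ref{lem-preserves-injectivity}, and the role of the rank bound $w_j(E_k)=0$ for $j>n_k$ in cutting the infinite generating set of $H^*(\U)/\id_N$ down to the finite set~$V$.
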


\begin{proof}[Proof of the Corollary]
  By the Proposition, there are no more monomials in the
  Stiefel-Whitney classes in a given degree than ways of picking a
  subset of the set of variables.
\end{proof}

We turn to the proof of the Proposition. We provide details for the
case~$N=1$ only, for the general case simply involves more complicated
notation. Note that the cohomology of~$ BO_\infty$ agrees with that
of~$BO_m$ in degrees~$< m$, and it will be technically easier to work
with such a space~$BO_m$ with~$m$ large enough. So for the duration of
this proof, the symbol~$\infty$ will stand for a conveniently large
integer.

The group~$O_\infty$ has a distinguished elementary abelian
subgroup~$T = (\z/2)^\infty$, given by the diagonal matrices with
entries~$\pm 1$. Moreover the map 
\[ H^*(BO_\infty) \longrightarrow H^* T = \ft[t_1, \ldots ,
t_\infty]  \]
is injective, and sends~$w_i$ to the~$i$-th symmetric function in
the variables~$t_1$, \ldots, $t_\infty$. From
Lemma~\ref{lem-preserves-injectivity}, the ideal~$\id_N$ is
the kernel of the map~$H^*(BO_\infty)\to H^* (T) / \id_T$.

The ring~$H^*(T)/\id_T$ is given by
Proposition~\ref{prop-ideal-elementary-abelian}. We see that in
degree~$n$, it has a basis in bijection with the non-empty subsets
of~$\{ 1, 2, \ldots, \infty \}$ of cardinality~$\le n$. Given such a
subset~$I$, we write~$t_{I, n}$ for the corresponding element. Note
that the multiplication is given by the rule 
\[ t_{I, n} t_{J, m} = t_{I\cup J, n+m} \, .   \]
We also put 
\[ t_{i, n} = \sum_{\#I = i} t_{I, n} \, .   \]
In this notation, the image of~$w_i$ is~$t_{i, i}$. We need to work
out the product of~$w_i$ and~$w_j$, and this is {\em a priori} given
by 
\begin{align*}
  t_{i, i} t_{j, j} & = \left(\sum_{\#I = i} t_{I, i} \right)
  \left(\sum_{\#J = j} t_{J, j} \right) \\
    & = \sum_{\#I= i, ~\#J= j} t_{I\cup J, i+j} \\
   & = \sum_{k\ge 1} \sum_{\#K = k} \alpha_{i, j, k} \, t_{K, i+j} \, . 
\end{align*}
This equality defines the number~$\alpha_{i, j, k}$, which is clearly
the number of ways of writing a set with~$k$ elements as a union~$I
\cup J$, where~$I$ has~$i$ elements and~$J$ has~$j$ elements. Luckily,
we are considering~$\alpha_{i, j, k}$ mod~$2$, and the formula above
can be drastically simplified.

\begin{lem} \label{lem-or}
Given~$i$ and~$j$, there is one and only one integer~$k$ such
that~$\alpha_{i, j, k}$ is odd. Moreover~$k$ is given by the following
recipe. Writing the binary expansions 
\[ i = \sum_{s\ge 0} a_{s} 2^s \quad\textnormal{and}\quad j = \sum_{s\ge 0} b_s 2^s \, , 
\]
then~$k$ is given by 
\[ k = \sum_{s\ge 0} \, \max(a_s, b_s) 2^s \, .   \]
This number will be written~$i \ou j$.
\end{lem}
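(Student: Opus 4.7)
The plan is to compute $\alpha_{i,j,k}$ explicitly as a multinomial coefficient and then apply Lucas' theorem (in its multinomial form) to detect its parity bit-by-bit.

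First I would note that a pair $(I,J)$ with $I \cup J = K$, $|I| = i$, $|J| = j$ is equivalent to the ordered partition $K = (I \setminus J) \sqcup (J \setminus I) \sqcup (I \cap J)$. Setting $m = i + j - k$, so that $|I \cap J| = m$, $|I \setminus J| = i - m$ and $|J \setminus I| = j - m$, one finds (when $\max(i,j) \le k \le i + j$)
\[ \alpha_{i,j,k} = \binom{k}{i-m,\; j-m,\; m} = \frac{k!}{(i-m)!\,(j-m)!\,m!}. \]
Outside the range $\max(i,j) \le k \le i+j$, $\alpha_{i,j,k} = 0$.

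Next, I would invoke the standard extension of Lucas' theorem (or equivalently Kummer's theorem) to multinomials: $\binom{k}{a,b,c}$ is odd if and only if no carries occur in the base-$2$ addition $a + b + c = k$, i.e.\ for each bit position $s$ at most one of $a_s, b_s, c_s$ equals $1$. Writing $a = i - m$, $b = j - m$, $c = m$, I would analyze the constraints $a_s + c_s = i_s$ and $b_s + c_s = j_s$ bit by bit. The case analysis is short: if $i_s = j_s = 1$ the no-carry condition forces $c_s = 1$, $a_s = b_s = 0$; if exactly one of $i_s, j_s$ is $1$, that bit is forced into $a$ or $b$ with $c_s = 0$; if $i_s = j_s = 0$ all three are zero. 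In every case $(a_s, b_s, c_s)$ is uniquely determined, and $k_s = a_s + b_s + c_s = \max(i_s, j_s)$.

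Putting these together gives uniqueness of the $k$ making $\alpha_{i,j,k}$ odd, and identifies it as
\[ k = \sum_{s \ge 0} \max(a_s, b_s)\, 2^s = i \ou j. \]
There is no serious obstacle; the only mild subtlety is to be careful that the degenerate ranges (when $k < \max(i,j)$ or $k > i+j$) are correctly handled, which they are since the multinomial is then zero and those $k$ cannot equal $i \ou j$ anyway (as $\max(i,j) \le i \ou j \le i + j$).
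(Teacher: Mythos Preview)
Your proof is correct and follows essentially the same approach as the paper: both compute $\alpha_{i,j,k}$ as the multinomial $k!/\bigl((k-i)!\,(k-j)!\,(i+j-k)!\bigr)$ and then read off its parity from the binary expansions. The only cosmetic difference is that the paper factors this as $\binom{k}{j}\binom{j}{k-i}$ and applies Kummer's support-inclusion criterion to each binomial, whereas you apply the multinomial form of Lucas' theorem directly; the bit-by-bit case analysis and the conclusion $k = i \ou j$ are then identical.
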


We point out that the notation~$i \ou j$ is very common in computer
science; it reminds the reader that a given ``bit'' of~$i \ou j$ is
set to~$1$ when the corresponding bit of~$i$ is~$1$ {\em or} the
corresponding bit of~$j$ is~$1$. The operation~$i, j \mapsto i \ou j$
is commutative and associative.

\begin{proof}[Proof of Lemma~\ref{lem-or}]
For any integer~$n$ whose binary expansion is 
\[ n = \sum_{s \ge 0} \, a_s 2^s \, ,   \]
we note 
\[ \supp (n)= \{ s : a_s \ne 0 \} \, ,
\]
and call it the support of~$n$. The number~$i \ou j$ is characterized
as the only integer whose support is~$\supp(i)\cup \supp(j)$.

Elementary combinatorics reveal that, when~$\max(i, j)\le k \le i+j$,
the number~$\alpha_{i, j , k}$ is given by
\begin{align*}
\alpha_{i, j, k} & = {k \choose i} {i \choose k-j} \\
               & =  {k \choose j} {j \choose k-i} \\
               & = \frac{k!} {(k-i)! (k-j)! (i+j -k)!} \, . 
\end{align*}
(For other values of~$k$, we have~$\alpha_{i, j, k} = 0$ trivially.)
Now we rely on an elementary observation, which dates back to Kummer
(see~\cite{kummer}, pp 507-508):
\[ {n \choose m} ~\textnormal{is odd}~ \quad\Longleftrightarrow\quad \supp(m)
\subset \supp (n) \, . 
\]

It appears that~$\alpha_{i, j, k}$ is odd precisely when the
following two conditions are satisfied:

(a) $\supp(j) \subset \supp(k)$,

(b) $\supp(k-j) \subset \supp(i)$.

\noindent Note that whenever (a) holds, we have~$\supp(k-j)= \supp(k)
\smallsetminus \supp(j)$. From this it is clear that (a) and (b)
together are equivalent to the condition~$\supp(k) = \supp(i)\cup
\supp(j)$. 
\end{proof}

We now see that the following relation holds in~$H^*T/\id_T$: 
\[ t_{i,i} t_{j ,j} = t_{i \ou j, i+j} \, .   \]
If follows that 
\[ w_{i_1} w_{i_2} \cdots w_{i_k} = t_{i_1 \ou i_2 \ou \cdots \ou i_k,
~ i_1 + i_2 + \cdots + i_k} \, ,   \]
and that
\[ w_{i_1} w_{i_2} \cdots w_{i_k} = w_{j_1} w_{j_2} \cdots w_{j_\ell}   \]
whenever~$i_1 + \cdots + i_k = j_1 + \cdots + j_\ell$ and~$i_1 \ou
\cdots \ou i_k = j_1 \ou \cdots \ou j_\ell$. In particular, we have 
\[ w_{2^{j_1}} \cdots w_{2^{j_s}} = w_{2^{k_1}} \cdots w_{2^{k_t}}  \]
whenever the degrees on both sides are equal and 
\[ \{ 2^{j_1}, \ldots, 2^{j_s} \} = \{ 2^{k_1}, \ldots, 2^{k_t} \} \,
. 
\]
That is, the relations (*) hold. The relations (**) are equally clear
at this point.

Let~$J$ be the ideal generated by (*) and (**). To finish the proof,
we need to show that~$H^*(\U)/J$ injects into~$H^*(T)/\id_T$. However,
this is obvious, and follows from the fact that the elements~$t_{i,
  n}$ for different values of~$i$ are linearly independent.

\section{Graded ~$K$-theory \& Graded representation rings}\label{sec-gradedrep}

We start this section by recalling the definition of the~$\gamma
$-filtration on a general~$\lambda $-ring. Next we consider, as first
examples, the representation rings of some finite groups. Then we
prove the existence of a ``character'', that is a ring homomorphism,
between the graded ring associated to the~$K$-theory of real vector
bundles over a topological space, and the subquotient of its
cohomology defined in the previous section.

\subsection{Grothendieck's construction}

Let us give some recollections about the~$\gamma $-filtration, which is
due to Grothendieck. Details may be found in~\cite{fultonlang},
\cite{atiyahtall}.   

The general setting is that of a~$\lambda $-ring~$K$ with
augmentation~$\varepsilon \colon K \to \z$ (in older terminology,
$\lambda $-rings were called ``special $\lambda $-rings''). So for
each~$n\ge 0$ there is a map~$\lambda^n \colon K \to K$, such that for
each~$x\in K$ one has~$\lambda^0(x)= 1$, ~$\lambda^1(x) = x$, and all
the identities presented in the references above. The kernel
of~$\varepsilon $ is denoted~$I$ and called the augmentation ideal.

The~$\gamma $-operations are then defined by
\[ \gamma ^n(x) = \lambda^n(x + n - 1) \, ,   \]
for~$x \in K$. For~$n\ge 1$, let~$\Gamma^n$ be the abelian subgroup
of~$K$ generated by all elements of the form
\[ \gamma^{k_1}(x_1) \gamma^{k_2}(x_2) \cdots \gamma^{k_s}(x_s)
\quad\textnormal{with}\quad \sum k_i \ge n \, , 
\]
where each~$x_i$ belongs to~$I$. One checks
that each~$\Gamma^n$ is an ideal in~$K$, that~$\Gamma^{n+1} \subset
\Gamma^n$, and that~$\Gamma^1 = I$. Writing~$\Gamma^0$ for~$K$ itself,
one can then consider the associated graded ring 
\[ \gr K = \Gamma^0/\Gamma^1 \oplus \Gamma^1 / \Gamma^2 \oplus
\Gamma^2 / \Gamma^3 \oplus \cdots \, , \]
where~$\Gamma^0 / \Gamma^1 = K/I \cong \z$.

Now for~$x\in K$ and~$i\ge 1$ we write~$c_i(x)$ for the class
of~$\gamma^i( x - \varepsilon (x))$ in~$\Gamma^i / \Gamma^{i+1}$. The
properties of the operations~$\lambda^i$ on~$K$ imply the simple
statement that those classes~$c_i(x)$ satisfy all the familiar axioms
of {\em Chern classes}; and indeed we shall call them the algebraic
Chern classes. This may be the place to point out that, in spite of
the many references to algebraic topology in this paper, {\em we will
  never mention the topological Chern classes}, and a notation such
as~$c_i(x)$ is always understood in the graded ring associated to
a~$\lambda $-ring.

The typical example of~$\lambda $-ring for us will be the~$K$-theory
of real vector bundles over a topological space on the one hand, and
the representation ring~$R(G, \k)$ of a finite group~$G$ over a
field~$\k$ on the other hand. When~$\k = \r$, we note that a real
representation of~$G$ defines a real vector bundle over the
classifying space~$BG$, thus yielding a map of~$\lambda $-rings
\[ R(G, \r) \longrightarrow KO(BG) \, ,   \]
which in turn induces a map 
\[ \gr R(G, \r) \longrightarrow \gr KO(BG) \, .  \]

\subsection{Elementary examples}

It is our intention to advertise the rings~$\gr R(G, \k)$, and encourage
further investigations by our readers. Computations with these have a
cohomological flavour, although the results are interestingly
different. Let us start with a couple of general statements.

\begin{lem} \label{lem-degree-1}
Let~$G$ be a finite group. Then the group~$\gr^1 R(G, \k)$ is
isomorphic to the group of~$1$-dimensional representations of~$G$
over~$\k$, under the tensor product operation. Moreover the isomorphism
is given by the first Chern class.
\end{lem}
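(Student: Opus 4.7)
The plan is to construct mutually inverse homomorphisms between the group of $1$-dimensional representations of $G$ over $\k$ and $\gr^1 R(G, \k) = \Gamma^1/\Gamma^2$. In one direction, send a $1$-dimensional representation $L$ to its first Chern class $c_1(L) = L - 1 \pmod{\Gamma^2}$. To verify this is a homomorphism with respect to tensor product, expand
\[ L \otimes L' - 1 = (L - 1) + (L' - 1) + (L - 1)(L' - 1), \]
and observe that $(L-1)(L'-1) \in \Gamma^1 \cdot \Gamma^1 \subset \Gamma^2$, so the cross-term vanishes in the associated graded.

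For surjectivity, recall that $\Gamma^1 = I$ is generated as an abelian group by elements $V - \dim V$ for representations $V$ of $G$. The crux is to prove
\[ V - \dim V \equiv \det V - 1 \pmod{\Gamma^2}, \]
so that every class in $\gr^1$ is hit by the $1$-dimensional representation $\det V$. This is the degree-one manifestation of the fact that the total Chern class is multiplicative under $\oplus$ and, via the splitting principle for $\lambda$-rings, behaves as though every $V$ splits into line elements $L_1, \ldots, L_n$ with $\sum (L_i - 1) \equiv \prod L_i - 1 \pmod{\Gamma^2}$. Concretely, one may prove the identity by induction on $\dim V$ using
\[ c_1(V \oplus W) = c_1(V) + c_1(W) \quad\textnormal{and}\quad c_1(V \otimes L) = c_1(V) + (\dim V)\, c_1(L) \]
for $L$ a line, both of which follow from the ring structure on $R(G, \k)$ modulo $\Gamma^2$ together with the defining identity $\gamma^n(x) = \lambda^n(x + n - 1)$.

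For injectivity, construct a determinant map $\delta \colon \Gamma^1 \to \mathrm{Hom}(G, \k^*)$ by declaring $\delta(V - \dim V) = \det V$ and extending additively; well-definedness follows from the cancellation law in $R(G, \k)$ and from $\det(V \oplus W) = \det V \otimes \det W$. The composition $L \mapsto c_1(L) \mapsto \delta(c_1(L)) = \det L = L$ is visibly the identity, so injectivity reduces to showing $\delta(\Gamma^2) = 1$. For two-factor generators $xy$ with $x = V - \dim V$ and $y = W - \dim W$, direct expansion together with $\det(V \otimes W) = (\det V)^{\dim W}(\det W)^{\dim V}$ gives $\delta(xy) = 1$; for the single-factor generators $\gamma^k(x)$ with $k \geq 2$, expand $\gamma^k$ via $\lambda_t(x + k - 1) = \lambda_t(x)(1+t)^{k-1}$ and apply the classical identity $\det \lambda^k V = (\det V)^{\binom{\dim V - 1}{k-1}}$.

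The main obstacle is the verification that $\delta$ annihilates the single-factor generators $\gamma^k(x)$ for $k \geq 2$: although the calculation reduces to the determinant identities above, it requires intricate bookkeeping with the higher $\lambda$-operations. By contrast, the two-factor case and the surjectivity step are essentially formal once these ingredients are in place.
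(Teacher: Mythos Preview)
Your argument is essentially correct and follows the standard route; by contrast, the paper does not give a direct proof at all but simply cites Theorem~1.7, Chapter~III, of Fulton--Lang, \emph{Riemann--Roch Algebra}. What you have written is, in outline, a reconstruction of that reference's argument (or a close variant): the determinant gives a splitting $\delta$ of the first Chern class map, and one checks that $\delta$ kills $\Gamma^2$.

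One small caution on your surjectivity step. You write that $V - \dim V \equiv \det V - 1 \pmod{\Gamma^2}$ can be proved ``by induction on $\dim V$'' using the additivity and twist formulas for $c_1$. In $R(G,\k)$ itself this induction does not obviously run, since an irreducible $V$ of dimension $>1$ need not split off a line summand or a line twist. The clean way is the one you mention first: pass to a $\lambda$-ring extension in which $V$ splits as $L_1+\cdots+L_n$ (the abstract splitting principle for $\lambda$-rings, as in Atiyah--Tall or Fulton--Lang), where the identity $\sum(L_i-1)\equiv \prod L_i - 1 \pmod{\Gamma^2}$ is immediate; then intersect back. Your injectivity argument via $\delta$ is fine, and the ``intricate bookkeeping'' you allude to for $\delta(\gamma^k(x))=1$ with $k\ge 2$ does reduce, as you say, to the determinant formula $\det\lambda^j V=(\det V)^{\binom{\dim V-1}{j-1}}$ together with a Vandermonde-type cancellation of exponents.
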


\begin{proof}
This follows from Theorem 1.7, Chapter III, in~\cite{fultonlang}. 
\end{proof}

\begin{lem} \label{lem-torsion}
  Let~$G$ be finite, and assume that the characteristic
of~$\mathbf{k}$ is~$0$. Then for~$n\ge 1$, the group~$\gr^n R(G, \k)$ is
  torsion. More precisely, if the order of~$G$ is~$|G|$, then~$\gr^n
  R(G, \k)$ is killed by~$|G|^n$.
\end{lem}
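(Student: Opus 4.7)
The natural strategy is to exhibit a single element $y$ of the augmentation ideal~$I$ such that multiplication by $y$ on~$I$ is the same as multiplication by~$-|G|$. Once this is achieved, one gets a ``degree-shifting'' inclusion $|G|\cdot \Gamma^p \subset \Gamma^{p+1}$ for every $p\ge 1$, from which the lemma follows by iterating $n$ times. The candidate for~$y$ is forced upon us: take $y = [\k[G]] - |G|$, where $\k[G]$ is the regular representation. Clearly $\varepsilon(y) = 0$, so $y \in I = \Gamma^1$.

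The key identity is $x \cdot [\k[G]] = \varepsilon(x) \cdot [\k[G]]$ for every $x \in R(G, \k)$. By $\z$-linearity and the description of $R(G, \k)$ as generated by classes of representations, it suffices to show $[V]\cdot [\k[G]] = \dim_\k(V)\cdot [\k[G]]$ for any finite-dimensional $\k G$-module~$V$. This follows from the classical $\k G$-isomorphism $V \otimes_\k \k[G] \cong \k[G]^{\dim_\k V}$, where the tensor product carries the diagonal action and the right-hand side the action on the factor only, via $v\otimes g \mapsto g^{-1}v \otimes g$. From $x[\k[G]] = \varepsilon(x)[\k[G]]$ one reads off that, for $x \in I$,
\[ xy \;=\; x[\k[G]] - |G|\cdot x \;=\; -|G|\cdot x. \]

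Now let $p \ge 1$ and $x \in \Gamma^p$. Since $\Gamma^p \subset \Gamma^1 = I$, the identity above applies, and $y\in \Gamma^1$ yields
\[ |G|\cdot x \;=\; -xy \;\in\; \Gamma^p \cdot \Gamma^1 \;\subset\; \Gamma^{p+1}, \]
using that the $\gamma$-filtration is multiplicative. Thus $|G|\cdot \Gamma^p \subset \Gamma^{p+1}$ for all $p\ge 1$. Iterating this inclusion $n$ times gives $|G|^n \cdot \Gamma^n \subset \Gamma^{2n} \subset \Gamma^{n+1}$, which says exactly that $|G|^n$ annihilates $\gr^n R(G,\k) = \Gamma^n/\Gamma^{n+1}$.

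There is no real obstacle: the only non-formal ingredient is the tensor-product isomorphism $V\otimes_\k \k[G] \cong \k[G]^{\dim V}$, which is standard and holds over any field. I note in passing that the argument actually delivers the stronger statement that $|G|$ itself (independent of~$n$) kills each $\gr^n R(G,\k)$; the exponent~$n$ recorded in the lemma is therefore not sharp, but presumably stated in this weaker form because it is all that will be needed in the sequel.
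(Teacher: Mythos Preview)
Your proof is correct and takes a genuinely different route from the paper's. The paper argues via Adams operations: since $\Psi^{|G|}\chi(g)=\chi(g^{|G|})=\chi(1)$, the operation $\Psi^{|G|}$ vanishes on~$I$ and hence on every $\Gamma^n$; combined with the standard $\lambda$-ring identity $\Psi^k x \equiv k^n x \pmod{\Gamma^{n+1}}$ for $x\in\Gamma^n$ (quoted from~\cite{fultonlang}), this gives $|G|^n x\in\Gamma^{n+1}$. You bypass Adams operations entirely, using instead the regular representation and the multiplicativity $\Gamma^p\Gamma^1\subset\Gamma^{p+1}$, which is immediate from the definition of the $\gamma$-filtration. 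Your argument is more elementary---it needs no character theory and no nontrivial $\lambda$-ring identity---and, as you observe, it actually proves the sharper statement that $|G|$ (not $|G|^n$) already annihilates each $\gr^n R(G,\k)$. One minor comment: the iteration to $\Gamma^{2n}$ in your final display is unnecessary, since $|G|\cdot\Gamma^n\subset\Gamma^{n+1}$ alone already kills $\gr^n$; but you note this yourself.
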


Thus the ring~$\gr R(G, \k)\otimes \q$ is isomorphic to~$\q$ concentrated in degree~$0$. The early investigations of the Grothendieck filtration focused on~$\gr K\otimes \q$ (in particular in the case when~$K= KO(X)$, the~$K$-theory of an algebraic variety), which the Lemma shows is not interesting for~$K= \gr R(G, \k)$. This may account for the lack of attention paid to these rings so far. We shall give several examples showing that~$\gr R(G, \k)$ is far from trivial.

\begin{proof}

We make use of the {\em Adams operations} $\Psi^k$,  which are defined in
any~$\lambda $-ring. When~$\chi $ is the character of a
representation, then~$\Psi^k \chi (g) = \chi (g^k)$, for~$g\in G$. So
for~$k=|G|$, we see that~$\Psi^{|G|} \chi = \chi (1)$ (copies of the
trivial representation). In particular~$\Psi^{|G|} \chi $ depends only
on the degree of~$\chi $, and so~$\Psi^{|G|} x = 0$ for~$x\in I$, the
augumentation ideal. {\em A fortiori}, the operation~$\Psi^{|G|}$ is
zero on~$\Gamma^n$.

However, we have for~$n\ge 1$ and~$x\in \Gamma^n$ the relation 
\[ \Psi^k x = k^n x \quad\textnormal{mod}\quad \Gamma^{n+1} \, , 
\]
see Proposition III, 3.1 in~\cite{fultonlang}. The result follows. 
\end{proof}

\begin{coro} \label{coro-gr-R-bounded}
  Let the number of irreducible representations of the finite
  group~$G$ over~$\k$ be~$c+1$. Then for~$n\ge 1$ the group~$\Gamma^n$
  is isomorphic to~$\z^c$ as an abelian group, and consequently the
  group~$\gr^n R(G, \k)$ is generated by~$c$ elements.
\end{coro}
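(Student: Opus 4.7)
The plan is to combine the torsion statement of Lemma~\ref{lem-torsion} with the elementary fact that $R(G,\k)$ is a finitely generated free abelian group, and then use that subgroups of free abelian groups are free.

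First I would observe that, as an abelian group under direct sum, $R(G,\k)$ is free on the (finitely many) isomorphism classes of irreducible representations, so $R(G,\k)\cong \z^{c+1}$. The augmentation $\varepsilon$ is surjective onto $\z$, hence the augmentation ideal $I=\Gamma^1$ is a subgroup of $\z^{c+1}$ with free quotient of rank $1$; thus $I$ is free of rank $c$.

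Next I would show that all $\Gamma^n$ with $n\ge 1$ have the same rank $c$. By Lemma~\ref{lem-torsion} each successive quotient $\Gamma^k/\Gamma^{k+1}=\gr^k R(G,\k)$ is torsion (killed by $|G|^k$), and a trivial induction on $n$ then gives that $\Gamma^1/\Gamma^n$ is a torsion group (every element is killed by some product of the $|G|^k$). Consequently the inclusion $\Gamma^n\hookrightarrow \Gamma^1$ becomes an isomorphism after tensoring with $\q$, so $\Gamma^n$ has rank $c$. Since $\Gamma^n$ is a subgroup of the finitely generated free abelian group $R(G,\k)$, it is itself free abelian of its rank, i.e.\ $\Gamma^n\cong \z^c$.

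The consequence for $\gr^n R(G,\k)$ is then immediate: it is a quotient of $\Gamma^n\cong\z^c$, and therefore generated by at most $c$ elements as claimed.

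I do not see a substantive obstacle here; the proof is essentially bookkeeping once Lemma~\ref{lem-torsion} is granted. The only mild subtlety worth flagging in writing is that ``same rank'' must be promoted to ``isomorphic to $\z^c$'', which needs the structure theorem for subgroups of free abelian groups of finite rank (equivalently, that $\Gamma^n$ is torsion-free and finitely generated). Both properties follow at once from the inclusion $\Gamma^n\subset R(G,\k)\cong\z^{c+1}$, so no further input is required.
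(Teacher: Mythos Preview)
Your argument is correct and follows essentially the same route as the paper's proof: both start from $R(G,\k)\cong\z^{c+1}$ to get $\Gamma^1\cong\z^c$, and then use Lemma~\ref{lem-torsion} to propagate the rank from $\Gamma^n$ to $\Gamma^{n+1}$. You have simply spelled out more carefully the passage from ``same rank'' to ``isomorphic to $\z^c$'' via the structure theorem, which the paper leaves implicit.
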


\begin{proof}
The group~$R(G, \k)$ is isomorphic to~$\z^{c+1}$, so the result is
certainly true for~$\Gamma^1 = I$. Moreover, the fact
that~$\Gamma^n/\Gamma^{n+1}$ is torsion indicates that~$\Gamma^{n+1}$
has the same rank as~$\Gamma^n$.
\end{proof}

Let us start giving concrete calculations.

\begin{prop} \label{prop-cyclic-alg-closed}
Suppose~$G= C_N$ is cyclic of order~$N$, and assume that~$\k$ is
algebraically closed, of characteristic prime to~$N$. Then 
\[ \gr R(G, \k) = \frac{\z[c_1( \rho )]} {(N c_1(\rho ))} \, ,
\]
for some $1$-dimensional representation~$\rho $. 
\end{prop}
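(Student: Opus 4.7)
The plan is to identify $R := R(C_N, \k)$ explicitly, show that the $\gamma$-filtration coincides with the $I$-adic filtration, and then compute $\gr R$ by matching coefficients. Since $\k$ is algebraically closed of characteristic coprime to $N$, every irreducible representation of $C_N$ is a power of a fixed faithful character $\rho$, so $R = \z[\rho]/(\rho^N - 1)$. Setting $y = \rho - 1$ identifies $R$ with $\z[y]/(f(y))$ where $f(y) = (1+y)^N - 1 = \sum_{k=1}^N \binom{N}{k} y^k$, and the augmentation ideal becomes $I = (y)$.

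The inclusion $I^n \subseteq \Gamma^n$ is immediate from multiplicativity of the $\gamma$-filtration together with $\Gamma^1 = I$. For the reverse, a direct computation gives $\gamma_t(\chi - 1) = 1 + (\chi - 1)t$ for any $1$-dimensional character $\chi$, so $\gamma^k(\chi - 1) = 0$ for $k \geq 2$; combined with the addition formula $\gamma^k(x+y) = \sum_{i+j=k} \gamma^i(x)\gamma^j(y)$ and the fact that $I$ is additively generated by the elements $\pm(\rho^j - 1)$, this yields $\gamma^k(x) \in I^k$ for every $x \in I$, hence $\Gamma^n \subseteq I^n$. With $\Gamma^n = I^n$ established, the decomposition $R = \z \oplus I$ (additive) gives $I^n = y^n R = \z \cdot y^n + I^{n+1}$, so $I^n/I^{n+1}$ is cyclic, generated by the class of $y^n$, which is identified with $c_1(\rho)^n$. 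Multiplying $f(y) = 0$ by $y^{n-1}$ yields $Ny^n \in I^{n+1}$, so the order of $y^n$ divides $N$.

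The substantive step is to show that this order is exactly $N$. Suppose $ay^n \in I^{n+1}$ in $R$; lifting back to $\z[y]$, write $ay^n - y^{n+1}r(y) = f(y)h(y)$ with $h(y) = \sum_j h_j y^j \in \z[y]$. Matching coefficients of $y^j$ for $1 \leq j \leq n$, the right-hand side contributes $\sum_{k=1}^j \binom{N}{k} h_{j-k}$ while the left contributes $a\, \delta_{j,n}$. For $n = 1$ one reads off $a = Nh_0$ directly; for $n > 1$ the equation at $j = 1$ is $Nh_0 = 0$ in $\z$, and induction on $j$ shows that once $h_0 = \cdots = h_{j-2} = 0$, the equation at $j < n$ collapses to $Nh_{j-1} = 0$, so $h_{j-1} = 0$. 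Finally, $j = n$ reduces to $a = Nh_{n-1}$, giving $N \mid a$. This yields the desired isomorphism $\z[c_1(\rho)]/(Nc_1(\rho)) \xrightarrow{\sim} \gr R$. The main obstacle is precisely this coefficient-matching step: the Adams-operation estimate from the previous lemma only guarantees that $\gr^n$ is killed by $|G|^n = N^n$, which is far weaker than the sharp bound $N$ that we need.
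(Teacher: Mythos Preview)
Your proof is correct and follows essentially the same route as the paper's: identify $R(C_N,\k)$ with $\z[y]/((1+y)^N-1)$ via $y=\rho-1$, show $\Gamma^n=I^n$ (the paper states this as an ``easy general fact'' for groups whose irreducibles are all one-dimensional, whereas you spell out the argument via $\gamma_t(\chi-1)=1+(\chi-1)t$), and then lift a putative relation $ay^n\in I^{n+1}$ to $\z[y]$ and analyze low-degree coefficients. The only cosmetic difference is in the final step: the paper argues by induction on~$n$, observing that the degree-$1$ term forces $Q(0)=0$ and then dividing the lifted equation by~$y$ to reduce to~$n-1$, while you match the coefficients of $y^1,\ldots,y^n$ directly to force $h_0=\cdots=h_{n-2}=0$ and then $a=Nh_{n-1}$; these are two ways of writing the same computation.
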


In particular for each~$n\ge 1$ we have~$\gr^n R(G, \k)= \z/N$,
generated by~$c_1(\rho )^n$. Thus the graded representation ring in
this case is abstractly isomorphic to the ring~$H^{2*}(G, \z)$.

\begin{proof}
The irreducible representations are~$1$, $\rho $, $\rho^2$, \ldots,
$\rho^{n-1}$. We have~$c_1(\rho^k) = k c_1(\rho )$
(Lemma~\ref{lem-degree-1}). From the definitions, we see that~$\gr
R(G, \k)$ is thus generated by~$c_1( \rho )$, and from the relation~$\rho^N
= 1$ we see that~$N c_1(\rho ) = 0$. We need to show that there are no
further relations. So we consider an integer~$d$ such that~$d c_1(\rho
)^n = 0$, and we will show that~$N$ divides~$d$. This will show
that~$\gr ^n R(G, \k)$ is no smaller than~$\z/N$.

It is an easy general fact that, when all the irreducible
representations of~$G$ are~$1$-dimensional, then~$\Gamma^n = I^n$,
the~$n$-th power of the augmentation ideal. Now, we have~$R(G, \k)=
\z[\rho ] /(\rho^N - 1)$, and it is easy to see that~$I$ is the ideal
generated by~$\rho - 1$, so that~$I^n$ is generated by~$(\rho -
1)^n$. 

Write~$x= c_1(\rho )= \rho -1$ for simplicity. The relation~$dx^n = 0$
in~$\Gamma^n / \Gamma^{n+1}$ lifts to~$dx^n = P(x) x^{n+1}$ in~$R(G,
\k)$ for a polynomial~$P$, and in turn we may write this in the form
\[ d x^n = P(x) x^{n+1} + Q(x) [ (1+x)^N - 1]  \]
in~$\z[x]$. If~$n > 1$ then we may look at the terms of degree~$1$ on
each side, and deduce that~$Q(0) = 0$. So we may divide the equation
by~$x$ and obtain a similar one with~$n$ replaced by~$n-1$. So we go
all the way down to~$n=1$. In this case 
\[ dx = P(x)x^2 + Q(x)[(1+x)^N - 1] \, ,   \]
and by looking at the terms of degree~$1$ we see that~$d= Q(0)N$,
which was what we wanted.
\end{proof}

Now let us see how changing the field~$\k$ affects the results.

\begin{prop} \label{prop-cyclic-real}
Let~$G= C_N$ be cyclic of order~$N$, and let~$\k= \r$. Then

(1) If~$N$ is odd, one has 
\[ \gr R(G, \r) \otimes \ft = \ft \, ,   \]
concentrated in degree~$0$.

(2) If~$N= 2m$ with~$m$ odd, then 
\[ \gr R(G, \r) \otimes \ft = \ft[c_1(\varepsilon )] \, ,   \]
for some~$1$-dimensional representation~$\varepsilon $.

(3) If~$N= 2m$ with~$m$ even, then 
\[ \gr R(G, \r) \otimes \ft = \frac{\ft[c_1(\varepsilon ), c_2(r
  )]} {(c_1(\varepsilon )^2)} \, , 
\]
where~$\varepsilon $ is~$1$-dimensional, and~$r $ is~$2$-dimensional.
\end{prop}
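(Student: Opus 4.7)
The three cases are handled by direct computation with the $\gamma$-filtration on $R(C_N, \r)$.

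\emph{Case (1):} When $N$ is odd, Lemma~\ref{lem-torsion} annihilates $\gr^n R(C_N, \r)$ by $N^n$, which is odd. Hence $\gr^n \otimes \ft = 0$ for $n \ge 1$, while $\gr^0 \otimes \ft = \z \otimes \ft = \ft$. This disposes of case (1).

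\emph{Case (2):} For $N = 2m$ with $m$ odd, the splitting $C_N \cong C_2 \times C_m$ yields a $\lambda$-ring isomorphism $R(C_N, \r) \cong R(C_2, \r) \otimes_\z R(C_m, \r)$: the non-real complex characters of $C_m$ pair within $C_m$, while those of $C_2$ do not exist, and one checks the ranks match. Combined with the tensor-product behavior of the $\gamma$-filtration and case (1) applied to $C_m$, this reduces the computation to $\gr R(C_2, \r) \otimes \ft$. For $R(C_2, \r) = \z[\varepsilon]/(\varepsilon^2 - 1)$ all irreducibles are one-dimensional, so $\Gamma^n = I^n$; the relation $(\varepsilon - 1)^2 = -2(\varepsilon - 1)$ then gives $I^n / I^{n+1} \cong \z/2$ generated by $c_1(\varepsilon)^n$, yielding $\ft[c_1(\varepsilon)]$ after tensoring.

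\emph{Case (3):} For $N = 2m$ with $m$ even, the decisive new input is the existence of a $2$-dimensional irrep $r = \chi^{m/2} + \chi^{-m/2}$ (where $\chi$ is a primitive character) which is \emph{fixed} by $\varepsilon = \chi^m$, i.e.\ $\varepsilon r = r$, because $\chi^{m + m/2} = \chi^{-m/2}$ modulo $N = 2m$. Setting $a = \varepsilon - 1$ and $b = r - 2$, this identity rewrites as $ab = -2a$; since $a \in \Gamma^1$ and $b \in \Gamma^2$, we have $ab \in \Gamma^3$, hence $2a \in \Gamma^3$ and $a^2 = -2a \in \Gamma^3$, so $c_1(\varepsilon)^2 = 0$ in $\gr^2$. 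Adams operations then control the remaining $2$-dimensional irreps: the formula $\Psi^j r \equiv j^2 r \pmod{\Gamma^3}$ gives $c_2(r_j) \equiv j^2 c_2(r)$ in $\gr^2$, so modulo $2$ every $c_2(r_j)$ is either $0$ or $c_2(r)$, contributing no new generator. Iterating this analysis in higher degrees leads to the claimed presentation $\ft[c_1(\varepsilon), c_2(r)]/(c_1(\varepsilon)^2)$.

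The main obstacle is completing case (3): showing that no hidden relations exist beyond $c_1(\varepsilon)^2 = 0$, and in particular that $c_2(r)$ is non-nilpotent. I would address this by induction on $N$, anchored at the explicit and directly verifiable case $N = 4$, and by using the character map $\cha$ developed in the next subsection to inject $\gr R(C_N, \r) \otimes \ft$ into $\W^*(BC_N)/\id_{BC_N}$, whose structure is controlled by the mod-$2$ cohomology $\ft[u, v]/(u^2)$ of $BC_{2^k}$ for $k \ge 2$ and matches the claimed answer on the nose. A secondary technicality is the tensor-product formula for the $\gamma$-filtration, which I believe is standard but may require an elementary verification from the definition of $\Gamma^n$ as the ideal generated by products $\gamma^{k_1}(x_1) \cdots \gamma^{k_s}(x_s)$ with $\sum k_i \ge n$.
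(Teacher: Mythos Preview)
Case (1) is correct and identical to the paper's argument.

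Case (2) takes a different route from the paper. The paper treats (2) and (3) uniformly: it shows $c_1(r_k)=0$ for all $k$, uses the multiplication $r_k r_\ell = r_{k+\ell}+r_{k-\ell}$ to prove that $c_2(r_k)$ depends only on the parity of $k$, and then from $\varepsilon r_1 = r_{m+1}$ obtains $c_2(r_1)+c_1(\varepsilon)^2 = c_2(r_{m+1})$; when $m$ is odd this gives $c_2(r_1)=c_1(\varepsilon)^2$, so everything is generated by $c_1(\varepsilon)$, and restriction to $C_2\subset C_N$ rules out relations. Your tensor-product reduction is plausible but the compatibility of the $\gamma$-filtration with $\otimes$ is not standard and you have not verified it; the paper's direct argument sidesteps this entirely.

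Case (3) has genuine gaps. First, your choice $r=\chi^{m/2}+\chi^{-m/2}$ is dangerous: since $\Psi^{m/2} r_1 = r$ one has $c_2(r)\equiv (m/2)^2 c_2(r_1)$ in $\gr^2$, so when $4\mid m$ your generator satisfies $c_2(r)=0$ in $\gr\otimes\ft$ and cannot play the role claimed. Second, the Adams operations applied to \emph{this} $r$ only produce $\Psi^j r = r_{jm/2}$, i.e.\ representations indexed by multiples of $m/2$; they do not reach the generic $r_k$, so your argument does not establish that $c_1(\varepsilon)$ and $c_2(r)$ generate. The paper instead works with $r_1$ and proves directly that every $c_2(r_k)$ equals $0$ or $c_2(r_1)$.

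Most seriously, your plan to rule out further relations by using $\cha$ to ``inject'' into $\W^*(BC_N)/\id_{BC_N}$ fails outright. In Example~\ref{ex-cyclic-over-reals} the paper computes, for $N=2m$ with $m$ even, that $\id_{C_N}=(w_1(\varepsilon)w_2(r_1))$ and that $\cha$ has kernel generated by $c_1(\varepsilon)c_2(r_1)$; in particular $\cha$ kills exactly the odd-degree classes $c_1(\varepsilon)c_2(r_1)^n$ whose non-vanishing you need. The paper's remedy is to use complexification $\gr R(G,\r)\to \gr R(G,\C)=\z[c_1(\rho)]/(Nc_1(\rho))$ from Proposition~\ref{prop-cyclic-alg-closed}: under this map $c_1(\varepsilon)\mapsto m\,c_1(\rho)$ and $c_2(r_1)\mapsto -c_1(\rho)^2$, which detects $c_2(r_1)^n$ and $c_1(\varepsilon)c_2(r_1)^n$ (the latter combined with the observation that $2c_1(\varepsilon)=0$ forces $\gr^{2n+1}$ to be $2$-torsion, so non-vanishing integrally implies non-vanishing over $\ft$).
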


\begin{proof}
The case (1) follows from Lemma~\ref{lem-torsion}. Assume~$N= 2m$. The
case~$m=1$ was already considered in the previous Proposition,
since~$R(C_2, \r) = R(C_2, \C)$, so we assume~$m > 1$. 

In this proof we work with characters rather than representations. The
irreducible characters of~$G$ over~$\C$ are~$1$, $\rho $, $\rho^2$,
\ldots, $\rho ^{N-1}$ as above. Put~$r_k= \rho^k + \rho^{-k}$
and~$\varepsilon = \rho^m$; in particular~$r_0= 2$ (two copies of the
trivial character) and~$r_m= 2 \varepsilon $. The group~$G$ has~$m+1$
irreducible real characters, namely~$r_k$ for~$1 \le k \le m-1$, together
with~$\varepsilon $ and the trivial character. When talking about the
characters~$r_k$ the indices will always be understood modulo~$N$.

The relation~$c_1(r) = c_1(\det(r))$ always holds (it follows from the
axioms for Chern classes). Thus we see that~$c_1(r_k) = 0$. Moreover
one checks immediately that 
\[ r_k r_\ell = r_{k+\ell} + r_{k - \ell} \, .   \]
We have~$c_2(r_{k+\ell} + r_{k - \ell}) = c_2(r_{k+\ell}) + c_2(r_{k -
  \ell})$, while~$c_2(r_kr_\ell)= 2(c_2(r_k) + c_2(r_\ell)) = 0$
(since we have tensored with~$\ft$). We draw 
\[ c_2(r_{k+\ell}) = c_2( r_{k-\ell}) \, .   \]
For~$\ell=1$ this shows that~$c_2(r_k)$ depends only on the parity
of~$k$; for~$k$ even we thus have~$c_2(r_k) = c_2(r_0) = 0$, while
for~$k$ odd we have~$c_2(r_k) = c_2(r_1)$.

Now, we use the relation~$\varepsilon r_1 = r_{m+1}$, which upon
comparison of the second Chern classes gives 
\[ c_2(r_1) + c_1(\varepsilon )^2 = c_2(r_{m+1}) \, . \tag{*}  \]
Here we need to distinguish between cases (2) and (3).

If~$m$ is odd, then~$m+1$ is even and~$c_2(r_{m+1}) = 0$, so that
equation~(*) shows that~$c_2(r_1) = c_1(\varepsilon )^2$. In this case
we see that~$\gr R(G, \r) \otimes \ft$ is generated
by~$c_1(\varepsilon )$. To show that there are no relations,
consider the subgroup~$C_2 \subset C_N$ and the restriction map 
\[ \gr R(C_N, \r)\otimes \ft \longrightarrow  \gr R(C_2, \r)\otimes
\ft = \ft[c_1(\bar \varepsilon )] \, .   \]
The equality on the right-hand side is the case~$m=1$ already
considered. It is clear that the restriction map sends~$\varepsilon $
to~$\bar \varepsilon $ (in obvious notation), and so
also~$c_1(\varepsilon )$ to~$c_1( \bar \varepsilon )$. The result
follows.

Now consider the case when~$m$ is even, so~$m+1$ is odd and equation
(*) reads~$c_1(\varepsilon )^2 = 0$. We see that~$ \gr R(G, \r)\otimes
\ft$ is generated by~$c_1(\varepsilon )$ and~$c_2(r_1)$, subject
to~$c_1(\varepsilon )^2 = 0$, and we need to show that there are no
further relations. We claim that~$c_2(r_1)^n \ne 0$ for all~$n$, and
that~$c_1(\varepsilon ) c_2(r_1)^n \ne 0$ for all~$n$, which will
suffice. To prove the claim we consider the map 
\[ \gr R(G, \r) \longrightarrow \gr R(G, \C) = \frac{\z[c_1(\rho )]}
{(N c_1(\rho ))} \, , 
\]
the equality coming from the previous Proposition. The
complexification map sends~$r_1$ to~$\rho + \rho^{-1}$
and~$\varepsilon $ to~$\rho^m$; so it sends~$c_1(\varepsilon )$ to~$m
c_1(\rho )$ and~$c_2(r_1 )$ to~$-c_1(\rho )^2$. From this it is
immediate that~$c_2(r_1 )^n$ is not zero in~$\gr^{2n} R(G, \r)\otimes
\ft$, while for~$c_1(\varepsilon ) c_2(r_1)^n$ we see that it is not
zero at least in~$\gr^{2n+1} R(G, \r)$; tensoring with~$\ft$ will not
hurt, though, for the relation~$\varepsilon^2 = 1$ yields~$2
c_1(\varepsilon ) = 0$, so that~$\gr^{2n+1} R(G, \r)$ is~$2$-torsion
anyway.
\end{proof}

We see that over the field of real numbers, the ring~$\gr R(G, \r)$
seems to be related to~$H^*(G, \ft)$, rather than~$H^{2*}(G,
\z)$. This connection will be strengthened by the ``character'' which
we will now define.

\subsection{The character}

We think of the following map as a mod~$2$ version of the Chern
character. 

\begin{thm} \label{thm-chern-character}
For any topological space~$X$, there is a map of rings 
\[ \cha \colon \gr KO(X) \longrightarrow \W^*(X)/\id_X \, .   \]
Moreover~$\cha(c_i(\rho )) = w_i(\rho )$.
\end{thm}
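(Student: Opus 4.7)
The plan is to define $\cha$ on the ring generators by $\cha(c_i(\rho)) = w_i(\rho)$ modulo~$\id_X$ and extend multiplicatively; the main task is then to verify well-definedness.

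First, I would observe that $\gr KO(X)$ is generated as a ring by the algebraic Chern classes $c_i(\rho)$. This follows directly from the definition of $\Gamma^n$: it is spanned by products $\gamma^{k_1}(x_1) \cdots \gamma^{k_s}(x_s)$ with $\sum_j k_j \ge n$ and $x_j \in I$, and each $\gamma^{k_j}(x_j)$ represents $c_{k_j}(\rho_j) \in \gr^{k_j}$ when $x_j = \rho_j - \varepsilon(\rho_j)$. So every element of $\gr^n$ is a $\z$-linear combination of products $c_{i_1}(\rho_1) \cdots c_{i_s}(\rho_s)$ with $\sum i_j = n$, to which the candidate formula assigns the product $w_{i_1}(\rho_1) \cdots w_{i_s}(\rho_s) \in \W^n(X)/\id_X$. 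The universal identities satisfied by algebraic Chern classes --- Whitney sum, the normalization $c_1(L_1 \otimes L_2) = c_1(L_1) + c_1(L_2)$ on line bundles, and functoriality --- are precisely those satisfied by Stiefel-Whitney classes, so the candidate $\cha$ preserves every universal relation, even before passing to the quotient by $\id_X$.

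The crux lies in handling the topology-dependent relations: if a combination of Chern-class products in total degree $n$ vanishes in $\gr^n KO(X)$, equivalently lies in $\Gamma^{n+1}$, then the corresponding combination of Stiefel-Whitney products must land in $\id_X$. I would establish this via the splitting principle combined with Lemma~\ref{lem-preserves-injectivity}: given a finite family of bundles, choose $f \colon Y \to X$ such that $f^*$ is injective on cohomology and each bundle pulls back to a sum of line bundles on $Y$. By Lemma~\ref{lem-preserves-injectivity}, the induced map $\W^*(X)/\id_X \to \W^*(Y)/\id_Y$ is injective, so it suffices to verify vanishing on $Y$. There, after unwinding the $\gamma$-operations on line bundles, elements of $\Gamma^{n+1}$ are $\z$-linear combinations of products $(L_1-1)\cdots(L_m-1)$ with $m \ge n+1$. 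By iterated application of Lemma~\ref{lem-rho-tens-L-minus-one}, each such product has $w_i = 0$ for $1 \le i < 2^{m-1}$. Since $2^{m-1} \ge 2^n > n$ whenever $m \ge n+1 \ge 2$, the candidate image in degree $n$ vanishes outright, hence trivially lies in $\id_X$.

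The main obstacle I anticipate is making the splitting-principle reduction precise in the $\lambda$-ring context: specifically, showing that a general generator of $\Gamma^{n+1}$ does reduce, after pullback, to a combination of the line-bundle products $(L_1-1)\cdots(L_m-1)$ to which Lemma~\ref{lem-rho-tens-L-minus-one} applies. This rests on expressing $\gamma^k(L-1)$ for a line bundle $L$ in terms of powers of $L-1$ using the $\lambda$-ring identities, and then assembling the resulting expression for a product $\prod \gamma^{k_j}(L_j - 1)$ with $\sum k_j \ge n+1$ into one of the desired form. Once this book-keeping is in place, the multiplicativity of $\cha$ follows from the Whitney formula on both sides, and the defining property $\cha(c_i(\rho)) = w_i(\rho)$ holds tautologically.
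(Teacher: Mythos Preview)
Your setup is correct --- defining $\cha$ on Chern-class monomials and checking well-definedness via the splitting principle is exactly the right outline --- but the key step contains a genuine error. You claim that ``the candidate image in degree~$n$ vanishes outright''. This is false. Take $X = B(\z/2)^2$, with $H^*(X) = \ft[t_1,t_2]$ and line bundles $L_i$ satisfying $w_1(L_i)=t_i$. Since $L_i^2 = 1$ one has $(L_1-1)^2(L_2-1) = -2(L_1-1)(L_2-1) = (L_1-1)(L_2-1)^2$, so the degree-$3$ combination $c_1(L_1)^2 c_1(L_2) - c_1(L_1)c_1(L_2)^2$ vanishes in $\gr^3 KO(X)$. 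Its candidate image is $t_1^2 t_2 + t_1 t_2^2$, which is \emph{not} zero in $H^3(X)$: it is precisely the generator of $\id_X$ in that degree. Thus the candidate image does not vanish; it only lies in~$\id_X$.

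The gap is that you have established a fact about the Stiefel--Whitney classes $w_i(C)$ of the $KO$-element $C$ itself, but the candidate image is a \emph{different} cohomology class, namely $\sum_\alpha \prod_j w_{i_{\alpha,j}}(\rho_{\alpha,j}) \in \W^n(Y)$. You never connect the two. The missing link is Theorem~\ref{thm-theta-n}: after splitting, the candidate image is a sum of monomials $t_1\cdots t_n$, and that theorem gives $\theta_n(t_1\cdots t_n) = w_{2^{n-1}}\bigl((L_1-1)\cdots(L_n-1)\bigr)$. Summing, $\theta_n$ applied to the candidate image equals $w_{2^{n-1}}(C)$, and \emph{this} is what vanishes by your Lemma~\ref{lem-rho-tens-L-minus-one} argument (since $C\in\Gamma^{n+1}$). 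Hence the candidate image lies in $\ker\theta_n = \id_Y$ in degree~$n$, as required. This is precisely the paper's route: it packages the computation as the well-defined homomorphism $w_{2^{n-1}}\colon \gr^n KO(X)\to H^{2^{n-1}}(X)$ (Lemma~\ref{lem-w-well-defined}), observes via Lemma~\ref{lem-chern2sw} that its image always has the form $\theta_n(x)$ with $x\in\W^n(X)$, and sets $\cha = x \bmod \id_X$. Your argument becomes correct once you insert this use of~$\theta_n$.
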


The proof will occupy the rest of this section. Throughout, we
write~$\Gamma^n$ for the~$n$-th stage in the~$\gamma $-filtration
of the~$\lambda $-ring~$KO(X)$.

\begin{lem} \label{lem-w-well-defined}
The application
\[ w_{2^{n-1}} \colon \Gamma^n \longrightarrow H^{2^{n-1}}(X)  \]
vanishes on~$\Gamma^{n+1}$. Thus it induces a map 
\[ \gr^n KO(X) \longrightarrow H^{2^{n-1}}(X) \, ,   \]
and this map is a homomorphism.
\end{lem}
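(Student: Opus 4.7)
The plan is to prove the stronger statement: for every $\rho \in \Gamma^n$, one has $w_i(\rho) = 0$ for $1 \leq i < 2^{n-1}$. Both conclusions of the lemma then follow at once. Applied with $n$ replaced by $n+1$, the statement yields $w_i = 0$ on $\Gamma^{n+1}$ for all $i < 2^n$, which includes $i = 2^{n-1}$, so $w_{2^{n-1}}$ vanishes on $\Gamma^{n+1}$ as required. Moreover, if $\rho_1, \rho_2 \in \Gamma^n$ both satisfy the stronger vanishing, then the multiplicativity $w_T(\rho_1+\rho_2) = w_T(\rho_1)w_T(\rho_2)$ of the total Stiefel-Whitney class forces $w_{2^{n-1}}(\rho_1+\rho_2) = w_{2^{n-1}}(\rho_1) + w_{2^{n-1}}(\rho_2)$, giving additivity of the induced map on $\gr^n KO(X)$.

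To prove the vanishing, I would first reduce to the split case via the splitting principle. Given a generator $\rho = \gamma^{k_1}(x_1)\cdots\gamma^{k_s}(x_s)$ of $\Gamma^n$ with $\sum k_i \geq n$ and each $x_i \in I$, choose $p\colon Y \to X$ with $p^*$ injective on cohomology and over which every vector bundle appearing in the $x_i$ splits as a sum of line bundles. By injectivity it suffices to establish the vanishing for $p^*\rho$.

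Over $Y$, each $x_i$ is a $\mathbf{Z}$-linear combination of terms of the form $(L-1)$ for line bundles $L$. Using $\gamma_t(y+z) = \gamma_t(y)\gamma_t(z)$ together with $\gamma_t(L-1) = 1+(L-1)t$ (so $\gamma^k(L-1)=0$ for $k \geq 2$) and $\gamma_t(1-L) = 1/(1+(L-1)t) = \sum_k (-1)^k(L-1)^k t^k$, one sees that $\gamma^{k_i}(x_i)$ expands as an integer combination of products of at least $k_i$ factors of the form $(L-1)$. Hence $p^*\rho$ becomes a $\mathbf{Z}$-linear combination of products $\prod_{j=1}^{N}(L_j - 1)$ with $N \geq \sum k_i \geq n$. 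For each such product the easy half of Theorem~\ref{thm-theta-n}, the vanishing part deduced from Lemma~\ref{lem-rho-tens-L-minus-one} by routine induction, gives $w_i = 0$ for $i < 2^{N-1}$, in particular for $i < 2^{n-1}$; summing closes the argument.

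The main technical point is the middle step, i.e.\ showing that after splitting the generators of $\Gamma^n$ really do become $\mathbf{Z}$-combinations of products of at least $n$ line-bundle differences. The subtlety is that elements of $I$ have rank zero, so virtual contributions of the form $(1-M) = -(M-1)$ enter, and one must handle the expansion of $\gamma_t(1-L)$ above carefully, as it produces unbounded powers of $(L-1)$ rather than terminating. Once this bookkeeping is in place, everything else reduces to results already established in the excerpt.
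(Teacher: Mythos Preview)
Your proof is correct and follows the same route as the paper: splitting principle, reduction to products $(L_1-1)\cdots(L_N-1)$ with $N\ge n$, then the vanishing half of Theorem~\ref{thm-theta-n}. The only difference is that the paper sidesteps the subtlety you flag by taking the generators of $\Gamma^n$ in the form $\gamma^{k_1}(E_1-\varepsilon(E_1))\cdots\gamma^{k_s}(E_s-\varepsilon(E_s))$ with each $E_i$ an \emph{actual} vector bundle, so that after splitting $E_i=L_1+\cdots+L_m$ one has $\gamma^{k}(E_i-\varepsilon(E_i))=\sigma_k(L_1-1,\ldots,L_m-1)$, a finite sum with all coefficients $+1$ and hence no need to expand $\gamma_t(1-L)$ at all.
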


\begin{proof}
First, consider a class in~$\Gamma^{n+1}$ of the form 
\[ \rho = (L_1 - 1) \cdots (L_{n+1} - 1) \, ,   \]
where each~$L_i$ is a line bundle. Then~$w_{2^{n-1}}( \rho ) = 0$ by
Theorem~\ref{thm-theta-n} (applied for~$n+1$). 

In general, we appeal again to the splitting principle. An
element~$\rho \in \Gamma^{n+1}$ may be written as a sum of elements
of the form
\[  \gamma^{k_1} (E_1 - \varepsilon (E_1)) \cdots
\gamma^{k_s}(E_s - \varepsilon (E_s))  \]
with~$\sum k_i \ge n+1$. However we may find a space~$Y$ and a
map~$Y\to X$ such that all the vector bundles~$E_i$ involved in the
expression of~$\rho $ split as sums of line bundles when pulled-back
to~$Y$; and moreover it may be arranged that the induced map~$H^*(X)\to H^*(Y)$ is
injective. 

To avoid multiple subscripts, let us work with a single vector
bundle~$E$, splittting up as a sum~$E = L_1 + \cdots + L_{\varepsilon
  (E)}$ over~$Y$. Then~$\gamma^k( E - \varepsilon (E))$ is the~$k$-th
symmetric function in the elements~$L_i - 1 = \gamma^1(L_i - 1)$. From
this and the particular case just studied, it follows easily
that~$w_{2^{n-1}}(\rho ) = 0 \in H^{2^{n-1}}(Y)$, and so this class is
also zero in~$H^{2^{n-1}}(X)$.

That~$w_{2^{n-1}}$ is a homomorphism is already true on~$\Gamma^n$,
and follows from the fact that all the elements in this group have
vanishing Stiefel-Whitney classes in degrees less than~$2^{n-1}$.
\end{proof}

To understand the map just defined, it is sufficient to indicate its
effect on product of Chern classes: indeed~$\Gamma^n / \Gamma^{n+1}$
is generated by those, by definition.

\begin{lem} \label{lem-chern2sw}
When~$\sum k_i = n$, we have
\[ w_{2^{n-1}}( c_{k_1}(E_1) \cdots c_{k_s}(E_s)    ) = \theta_n(
w_{k_1}(E_1) \cdots w_{k_s} (E_s)  ) \, .   \]
\end{lem}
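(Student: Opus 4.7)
The plan is to reduce to the split case via the splitting principle, then expand both sides as symmetric functions and invoke Theorem~\ref{thm-theta-n} term by term.

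First I would apply the splitting principle to the bundles $E_1, \ldots, E_s$: there exists a map $p \colon Y \to X$ such that $p^*$ is injective on cohomology and each $p^* E_j$ splits as a sum of line bundles $L_{j,1} + \cdots + L_{j,r_j}$. Since both sides of the claimed identity are natural and live in $H^{2^{n-1}}(X)$, and $p^*$ is injective, it suffices to verify the identity after pulling back to $Y$. So I may as well assume from the start that each $E_j = L_{j,1} + \cdots + L_{j,r_j}$, and write $t_{j,i} = w_1(L_{j,i})$.

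Next I would unwind what the Chern classes look like after splitting. By multiplicativity of $\gamma_t$, for a sum of line bundles one has $\gamma_t(E_j - r_j) = \prod_i (1 + t(L_{j,i} - 1))$, so $\gamma^{k_j}(E_j - \varepsilon(E_j))$ is the $k_j$-th elementary symmetric polynomial $e_{k_j}$ in the variables $L_{j,i} - 1$. Modulo $\Gamma^{n+1}$, the product of Chern classes is therefore
\[ c_{k_1}(E_1) \cdots c_{k_s}(E_s) \equiv \sum_{\underline I} \prod_{j=1}^s \prod_{i \in I_j} (L_{j,i} - 1), \]
where the sum ranges over tuples $\underline I = (I_1, \ldots, I_s)$ with $|I_j| = k_j$. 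Each summand is exactly a product of $n = \sum k_j$ terms of the form $L - 1$ for some line bundles, so Theorem~\ref{thm-theta-n} applies and yields
\[ w_{2^{n-1}}\Bigl(\prod_{j,i \in I_j}(L_{j,i}-1)\Bigr) = \theta_n\Bigl(\prod_{j,i \in I_j} t_{j,i}\Bigr). \]

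Now I would assemble the pieces. Since $w_{2^{n-1}}$ is additive on $\Gamma^n$ (established in Lemma~\ref{lem-w-well-defined}) and $\theta_n$ is a linear (Steenrod) operation, summing the displayed identity over $\underline I$ gives
\[ w_{2^{n-1}}\bigl(c_{k_1}(E_1)\cdots c_{k_s}(E_s)\bigr) = \theta_n\Bigl(\sum_{\underline I} \prod_{j, i \in I_j} t_{j,i}\Bigr) = \theta_n\bigl(e_{k_1}(\underline t_1) \cdots e_{k_s}(\underline t_s)\bigr). \]
Finally $e_{k_j}(t_{j,1}, \ldots, t_{j,r_j}) = w_{k_j}(E_j)$ because $E_j$ splits as a sum of the $L_{j,i}$, which finishes the argument.

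The only real subtlety is checking that the ``$\mathrm{mod}\ \Gamma^{n+1}$'' slippage in the expansion of $c_{k_1}\cdots c_{k_s}$ is harmless, but this is exactly what Lemma~\ref{lem-w-well-defined} provides: the map $w_{2^{n-1}} \colon \Gamma^n \to H^{2^{n-1}}(X)$ kills $\Gamma^{n+1}$, so replacing the Chern-class product by any representative in its coset — in particular by the sum of products of $(L-1)$'s above — does not change $w_{2^{n-1}}$ of it. No other step requires more than bookkeeping, so I expect no serious obstacle.
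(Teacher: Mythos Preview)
Your proof is correct and follows essentially the same approach as the paper's: reduce to products of $(L-1)$'s via the splitting principle, invoke Theorem~\ref{thm-theta-n} on each summand, and sum up using the additivity from Lemma~\ref{lem-w-well-defined}. You have simply written out in full the details that the paper declares ``easy, and will be omitted.''
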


\begin{proof}
  Again we start with the case when each~$E_i$ is a line bundle. In
  this case~$c_k(E_i) = 0$ for~$k > 1$, while~$c_1(E_i) = E_i - 1$; so
  it suffices to show that
  \[ w_{2^{n-1}} ( (E_1-1) \cdots (E_n-1) ) = \theta_n( w_1(E_1)
  \cdots w_1(E_n) ) \, ,  \]
which is the statement of Theorem~\ref{thm-theta-n}.

Unsurprisingly, the general case follows from the splitting
principle. Since Chern classes and Stiefel-Whitney classes behave in
the same way when a vector bundle splits as a sum, the argument is
easy, and will be omitted.
\end{proof}

The proof of the Theorem is now easy. Given an element~$\rho \in
\Gamma^n/\Gamma^{n+1}$, the class~$w^{2^{n-1}}(\rho )$ is well-defined
by Lemma~\ref{lem-w-well-defined} , and is of the form~$\theta_n(x)$
for~$x\in \W^n(X)$. So we may set~$\cha( \rho ) = x \in \W^n(X) /
\id_X$. Lemma~\ref{lem-chern2sw} proves both that~$\cha$ is a
homomorphism of rings, and that its values on Chern classes are as
announced in the Theorem. This concludes the proof.

As a by-product of the proof, we have the following result. Part (1)
is probably well-known, but it is just as easy (and more convenient
for our readers) to establish it directly. Part (2) should be compared
with Corollary~\ref{coro-gr-R-bounded}. 

\begin{lem} \label{lem-sw-involves-only-reps}
Let~$G$ be a finite group. 

(1) The ring~$\W^*(G)$ is generated by Stiefel-Whitney classes of real
representations of~$G$.

(2) There exists a constant~$C$ such that
\[ \dim_{\ft} \W^n (G)/ \id_G \le C \]
for all~$n\ge 0$.
\end{lem}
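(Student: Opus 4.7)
For part (1), my plan is to exploit the observation that the Stiefel--Whitney classes of a real vector bundle~$E$ over~$BG$ depend only on the homotopy class of its classifying map~$BG\to BO$, so they are determined by the image of~$[E]$ in the representable real~$K$-theory~$\mathbf{KO}(BG)=[BG,\z\times BO]$. The key input I would use is the Atiyah--Segal completion theorem, which identifies~$\mathbf{KO}(BG)$ with the $I$-adic completion of~$R(G,\r)$. Given any~$n$, I would choose~$N$ large enough that~$2^{N-1}>n$, and then produce a real representation~$r$ whose class in~$\mathbf{KO}(BG)$ agrees with that of~$E$ modulo~$I^N$. Since~$I^N\subseteq\Gamma^N$ in the $\gamma$-filtration, the inductive argument immediately following Lemma~\ref{lem-rho-tens-L-minus-one} shows that the virtual element~$[E]-[r]$ has vanishing Stiefel--Whitney classes in all degrees strictly less than~$2^{N-1}$. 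Multiplicativity of the total Stiefel--Whitney class~$w_T$ then yields~$w_n(E)=w_n(r)$, so each generator of~$\W^*(G)$ already lies in the subring generated by the Stiefel--Whitney classes of genuine representations.

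Part (2) should then follow at once. The finite group~$G$ admits only finitely many irreducible real representations~$r_1,\ldots,r_s$, each of finite dimension; by part (1), the ring~$\W^*(G)$ is generated by the finite family of classes~$\{w_i(r_j):1\le j\le s,\ 1\le i\le \dim r_j\}$. Corollary~\ref{coro-bounded} is then directly applicable and delivers the required uniform bound~$C$.

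The main obstacle is part (1), and specifically the need to bridge the gap between $KO(BG)$ (the Grothendieck group of honest real vector bundles) and the representable group~$\mathbf{KO}(BG)$, a distinction the authors themselves signalled at the start of Section~\ref{sec-ideal}. A more self-contained argument could plausibly be given by approximating a classifying map~$BG\to BO_k$ up to a given Postnikov stage by one induced by an actual homomorphism~$G\to O_m$ after stabilization, using the finiteness of~$H^n(BG,\ft)$; but however one phrases it, this is the step that requires real care.
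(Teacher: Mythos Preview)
Your approach is essentially identical to the paper's: both use the factorisation of Stiefel--Whitney classes through~$\mathbf{KO}(BG)$, invoke the Atiyah--Segal completion theorem to approximate a bundle by a virtual representation modulo a high power of~$I$, and then use~$I^N\subset\Gamma^N$ to conclude that the difference has vanishing Stiefel--Whitney classes in low degrees; part~(2) is handled exactly as you suggest. One small point: the correct reference for the vanishing step is Lemma~\ref{lem-w-well-defined}, whose proof uses the splitting principle to pass from products~$(L_1-1)\cdots(L_N-1)$ to arbitrary elements of~$\Gamma^N$; the inductive argument right after Lemma~\ref{lem-rho-tens-L-minus-one} only treats the former.
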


\begin{proof}
There are maps 
\[ R(G, \r) \longrightarrow  KO(BG) \longrightarrow  \mathbf{KO}(BG) \, ,   \]
and the Stiefel-Whitney class~$w_i$ (as a map) can be factored
through~$\mathbf{KO}(BG)$. Now we use Atiyah and Segal's completion
Theorem (see~\cite{atiyahsegal}, Theorem 7.1), which states that the
above diagram induces an identification of~$\mathbf{KO}(BG)$ with
the~$I$-adic completion of~$R(G, \r)$, where~$I$ is the augmentation
ideal.  Under this identification, we see that any element~$\rho \in
\mathbf{KO}(BG)$ may be approximated by a virtual representation~$r$
up to an element in a high power of~$I$. Since~$I^n \subset \Gamma^n$,
we know from Lemma~\ref{lem-w-well-defined} that by taking~$n$ large
enough we can insure that~$w_i(\rho ) = w_i(r)$ in a convenient
range. This proves (1).
  
  Property (2) follows from (1) and Corollary~\ref{coro-bounded}
  applied to~$BG$, keeping in mind that~$G$ only has finitely many
  real representations up to isomorphism.
\end{proof}

\subsection{Applications}

Our applications will use the natural map 
\[ \gr R(G, \r) \longrightarrow \gr KO(BG) \, ,   \]
which, when composed with the character~$\cha$, induces the map 
\[ \gr R(G, \r) \otimes \ft \longrightarrow \W^*(G)/\id_G \, .   \]
We will still denote this map by~$\cha$.

\begin{ex} \label{ex-cyclic-over-reals} We start with a few examples
  for which the source and target of~$\cha$ can be computed separately
  with relative ease (yet they are relevant to our applications to
  Milnor~$K$-theory). The notation is as in
  Proposition~\ref{prop-cyclic-real}.  Let~$G = C_N$ be a cyclic
  group. The cohomology ring~$H^*(G, \ft)$ is well-known, and admits
  the same description as~$\gr R(G, \r)\otimes \ft$ as in
  Proposition~\ref{prop-cyclic-real}, except that Chern classes are to
  be replaced with Stiefel-Whitney classes. In particular~$\W^*(G) =
  H^*(G)$.

  When~$N$ is odd, the map~$\cha$ is an isomorphism for uninteresting
  reasons (both rings being trivial). When~$N= 2m$ with~$m$ odd, we
  have~$H^*(G, \ft) \cong H^*(C_2, \ft)$, induced by the inclusion~$C_2
  \subset C_N$; it follows from  Proposition~\ref{prop-ideal-elementary-abelian} that~$\id_G =
  (0)$. As a result $\cha \colon \gr R(G, \r)\otimes \ft \to H^*(G)$
  is again an isomorphism.

When~$N= 2m$ with~$m$ even however, we shall see that~$\cha$ is not an
isomorphism, even though~$\gr R(G, \r)\otimes \ft$ and~$H^*(G)$ are
abstractly isomorphic. Let us compute~$\id_G$ first. We
have~$Sq^1(w_2(r_1)) = w_1(r_1) w_2(r_1)$ by Wu's formula,
and~$w_1(r_1) = w_1(\det(r_1)) = 0$; as a result~$Sq^1 w_1(\varepsilon
) w_2(r_1) = 0$, and~$w_1(\varepsilon ) w_2(r_1)\in \id_G$. In
fact~$\id_G$ is generated by this element. Indeed consider the two
maps
\[ H^*(G) / (w_1(\varepsilon ) w_2(r_1)) \longrightarrow H^*(G)/\id_G
\longrightarrow H^*(C_2) = \ft[w_1( \bar \varepsilon )] \, .  \]
The first one is surjective, and we need to see that it is an
isomorphism. This is trivially the case in odd degrees, both groups
being zero; in degree~$2n$, the group~$H^{2n}(G) / (w_1(\varepsilon )
w_2(r_1))$ is generated by~$w_2(r_1)^n$. The second map is the
restriction map induced by the inclusion~$C_2 \subset G$, so it sends~$w_2(r_1)^n$
 to~$w_1(\bar \varepsilon )^{2n}$. Thus~$w_2(r_1)^n \ne 0$
 in~$H^*(G)/\id_G$, and our computation of~$\id_G$ is complete.
 
The map 
\[ \cha \colon \gr R(G, \r)\otimes \ft \longrightarrow H^*(G)/\id_G  \]
has the form 
\[ \frac{\ft[c_1(\varepsilon ), c_2(r_1)]} {(c_1(\varepsilon )^2)}
\longrightarrow \frac{\ft[w_1(\varepsilon ), w_2(r_1)]}
{(w_1(\varepsilon )^2, w_1(\varepsilon ) w_2(r_1))} \, , 
\]
with~$c_1(\varepsilon )\mapsto w_1(\varepsilon )$ and~$c_2(r_1)\mapsto
w_2(r_1)$, so its kernel is generated by~$c_1(\varepsilon )c_2(r_1)$.
\end{ex}

The character~$\cha$ will help us compute the graded representation
ring of elementary abelian~$2$-groups. 

\begin{prop}
  Let~$G= (\z/2)^r$. Over any field~$\k$ of characteristic different
  from~$2$, there are~$1$-dimensional representations~$\varepsilon_1,
  \ldots,\varepsilon_r$ such that the graded representation ring is
\[ \gr R(G, \k)\otimes \ft = \frac{\ft[c_1(\varepsilon_1), \ldots,
  c_1(\varepsilon_r)]} { (c_1(\varepsilon_i)^2 c_1(\varepsilon_j) +
  c_1(\varepsilon_i)c_1(\varepsilon_j)^2  )}
\]
for~$1 \le i, j \le r$.
\end{prop}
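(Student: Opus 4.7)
The plan is to follow the pattern of Proposition~\ref{prop-cyclic-alg-closed}: exhibit a candidate presentation, verify the relations by a direct computation in $R(G, \k)$, and use the character $\cha$ together with Proposition~\ref{prop-ideal-elementary-abelian} to preclude further relations.

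First I would reduce to $\k = \r$. For $G = (\z/2)^r$ and any field $\k$ with characteristic $\neq 2$, all irreducible representations take values in $\{\pm 1\} \subset \k$, so they are exactly the characters $\varepsilon_S = \prod_{i \in S} \varepsilon_i$ indexed by subsets $S \subseteq \{1, \ldots, r\}$, where $\varepsilon_i \colon G \to \k^\times$ is the $i$-th coordinate. Since the $\lambda$-operations on a $\z$-linear combination of one-dimensional characters are determined by the ring structure (each $\varepsilon_S$ is a $\lambda$-line element: $\lambda^n(\varepsilon_S) = 0$ for $n \geq 2$), we obtain canonical isomorphisms of $\lambda$-rings $R(G, \k) \cong R(G, \r)$, so it suffices to work over $\k = \r$. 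Furthermore, since all irreducibles are one-dimensional, the $\gamma$-filtration coincides with the $I$-adic filtration (recalled in the proof of Proposition~\ref{prop-cyclic-alg-closed}), and since $I$ is generated by $x_i := \varepsilon_i - 1$, the ring $\gr R(G, \r)$ is generated, as a ring, by $c_1(\varepsilon_1), \ldots, c_1(\varepsilon_r)$.

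Next I would verify the relations. The identity $\varepsilon_i^2 = 1$ expands in $R(G, \r)$ to $x_i^2 = -2x_i$, whence
\[ x_i^2 x_j + x_i x_j^2 = -4 x_i x_j. \]
The left-hand side lies in $\Gamma^3$, so the right-hand side does too, and both vanish in $R(G, \r) \otimes \ft$. Their common image in $\gr^3 R(G, \r) \otimes \ft$, namely $c_1(\varepsilon_i)^2 c_1(\varepsilon_j) + c_1(\varepsilon_i) c_1(\varepsilon_j)^2$, is therefore zero. Combined with the previous paragraph, this yields a well-defined surjective graded $\ft$-algebra map
\[ \phi \colon \frac{\ft[y_1, \ldots, y_r]}{(y_i^2 y_j + y_i y_j^2)} \longrightarrow \gr R(G, \r) \otimes \ft, \qquad y_i \longmapsto c_1(\varepsilon_i). \]

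The last step is to show that $\phi$ is injective. I would compose it with the character $\cha \colon \gr R(G, \r) \otimes \ft \to \W^*(G)/\id_G$ of Theorem~\ref{thm-chern-character}, which sends $c_1(\varepsilon_i)$ to $w_1(\varepsilon_i) = t_i$, where $t_1, \ldots, t_r$ are the standard generators of $H^1(G) = \ft^r$. By Proposition~\ref{prop-ideal-elementary-abelian}, $\W^*(G)/\id_G$ is precisely $\ft[t_1, \ldots, t_r]/(t_i^2 t_j + t_i t_j^2)$, so $\cha \circ \phi$ is tautologically an isomorphism, forcing $\phi$ to be one as well (and, as a by-product, $\cha$). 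There is no single hard step here: everything is assembly once the presentation of $\W^*(G)/\id_G$ is known, so the heavy lifting for this proposition was already done in Proposition~\ref{prop-ideal-elementary-abelian}.
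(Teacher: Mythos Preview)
Your approach is essentially identical to the paper's: reduce to $\k=\r$, observe that the $c_1(\varepsilon_i)$ generate, verify the relations using $x_i^2 = -2x_i$, and then use the character~$\cha$ together with Proposition~\ref{prop-ideal-elementary-abelian} to rule out further relations.

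There is one small logical slip in your verification of the relations. From $x_i^2 x_j + x_i x_j^2 = -4 x_i x_j$ you conclude that this element vanishes in $R(G,\r)\otimes\ft$, and then infer that its image in $\gr^3 R(G,\r)\otimes\ft$ is zero. That inference is not valid in general: an element $z\in\Gamma^3$ with $z\in 2\,R(G,\r)$ need not lie in $2\,\Gamma^3 + \Gamma^4$, which is what vanishing in $(\Gamma^3/\Gamma^4)\otimes\ft$ requires. The paper avoids this by the cleaner observation that $x_i^2 x_j = -2 x_i x_j = x_i x_j^2$ as elements of $\Gamma^3$, so $c_1(\varepsilon_i)^2 c_1(\varepsilon_j) = c_1(\varepsilon_i) c_1(\varepsilon_j)^2$ already in $\gr^3 R(G,\r)$, before tensoring with~$\ft$. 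Your computation $x_i^2 = -2x_i$ already contains this; you just drew the conclusion through the wrong intermediary.
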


\begin{proof}
The representation ring~$R(G, \k)$ is the same over any field of
characteristic $\ne 2$, so we may as well take~$\k= \r$. We have~$R(G)=
\z[\varepsilon_1, \ldots, \varepsilon_r]/(\varepsilon_i^2 -1)$, where
each~$\varepsilon_i$ has dimension~$1$, so the graded representation
ring is generated by the classes~$c_1(\varepsilon_i)$. 

Let~$x_i= \varepsilon_i - 1$. The relation~$\varepsilon_i^2= 1$ shows
that~$x_i^2 = -2 x_i$, so that~$x_i^2 x_j = x_i x_j^2 = -2 x_i
x_j$. Given that~$c_1(\varepsilon_i)$ is the image of~$x_i$
in~$\Gamma^1 / \Gamma^2$, we certainly have the
relation~$c_1(\varepsilon_i)^2 c_1(\varepsilon_j) =
c_1(\varepsilon_i)c_1(\varepsilon_j)^2$.

To prove that there are no more relations, we use the
character~$\cha$, which maps the graded representation ring
into~$H^*(G)/\id_G$. The latter is presented in
Proposition~\ref{prop-ideal-elementary-abelian}, and we
have~$c_1(\varepsilon_i)\mapsto w_1(\varepsilon_i) = t_i$ in the
notation of that Proposition. It is immediate that~$\cha$ is an
isomorphism, and the proof is complete.
\end{proof}

There seems to be no elementary way (that is, not relying on the
character) to prove this Proposition. 

The example of elementary abelian groups is useful when studying
larger groups, as we now show with the example of the dihedral
group. Before stating the result, let us fix some notation. We
write~$D_4$ for the dihedral group of order~$8$. The representation
ring~$R(D_4, \k)$ is the same over any field of characteristic~$\ne 2$:
there is a unique irreducible representation of dimension~$2$ which we
call~$\Delta $, and four irreducible representation of dimension~$1$,
say~$r_1$, $r_2$, and~$r_3 = r_1 r_2 = \lambda^2(\Delta )$, together
with the trivial one. The cohomology ring is given by 
\[ H^*(D_4) = \frac{\ft[w_1(r_1), w_1(r_2), w_2(\Delta )]} {(w_1(r_1)
  w_1(r_2))} = \W^*(D_4)\, . 
\]
(Here the representations are taken over~$\r$ of course.)

\begin{prop} \label{prop-graded-D4}
  Over any field~$\k$ of characteristic~$\ne 2$, the graded
  representation ring of the dihedral group is given by 
\[ \gr R(D_4, \k)\otimes \ft =  \frac{\ft[c_1(r_1), c_1(r_2), c_2(\Delta )]} {(c_1(r_1)
  c_1(r_2),  ~c_1(r_1) c_2(\Delta ) + c_1(r_2) c_2(\Delta ))} \, .  \]

On the other hand, the ideal~$I_{D_4}$ is generated by~$w_1(r_1)
w_2(\Delta )$ and~$w_1(r_2) w_2(\Delta )$. The kernel of the
character~$\cha$ is generated by~$c_1(r_1) c_2(\Delta )$ and~$c_1(r_2)
c_2(\Delta )$.
\end{prop}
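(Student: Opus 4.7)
The plan is to address the three claims in sequence: first I would compute the ideal~$\id_{D_4}$, then derive the ring presentation of~$\gr R(D_4, \k) \otimes \ft$, and finally deduce the kernel of~$\cha$ from the comparison.

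For~$\id_{D_4}$, I would start with Wu's formula applied to~$w = w_2(\Delta)$: since~$\Delta$ is $2$-dimensional, $Sq^1(w) = w_1(\Delta) w = (x+y) w$, whence $\theta_3(xw) = Sq^1(xw) = x^2 w + x(x+y) w = xyw = 0$, so~$xw$ (and by symmetry~$yw$) lies in~$\id_{D_4}$. To see these generate the whole ideal, I would analyse the quotient~$H^*(D_4)/(xw, yw)$, whose basis in degree~$n$ is~$\{x^n, y^n\}$ together with~$w^{n/2}$ when~$n$ is even. For the pure powers I would use the mod-$2$ identity~$w_T((L - 1)^n) = (1 + w_1(L) T)^{2^{n-1}}$ to obtain~$\theta_n(x^n) = x^{2^{n-1}}$ and similarly for~$y^n$. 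For~$w^k$ in degree~$2k$, I would restrict to the Klein four subgroup~$V_1 = \langle r^2, s\rangle$: there~$x \mapsto \alpha$,~$y \mapsto 0$, and~$w \mapsto \alpha\beta + \beta^2$, so by Proposition~\ref{prop-ideal-elementary-abelian} one has~$(\alpha\beta + \beta^2)^k \equiv \beta^{2k} + \alpha\beta^{2k-1}$ modulo~$\id_{V_1}$, a non-zero element. Together with the restriction to~$V_2 = \langle r^2, sr\rangle$ (needed to separate~$x^{2k}$ from~$y^{2k}$) and Lemma~\ref{lem-preserves-injectivity}, this would force~$\id_{D_4} = (xw, yw)$.

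For the ring presentation, I would extract both relations from the identity~$r_i \otimes \Delta = \Delta$, valid for $i = 1, 2, 3$ as one checks on characters. Applying the splitting principle to write~$c_2(L \otimes \Delta) = c_1(L)^2 + c_1(L) c_1(\Delta) + c_2(\Delta)$, and using~$c_1(\Delta) = c_1(\det \Delta) = c_1(r_1) + c_1(r_2)$, the case $L = r_1$ gives $c_1(r_1) c_1(r_2) = 2 c_1(r_1)^2$, which vanishes mod~$2$. For the second relation I would work directly in~$R(D_4)$ using the representative $c_2(\Delta) = r_3 - \Delta + 1$: a direct expansion yields $(r_i - 1) \cdot c_2(\Delta) = -(r_1 - 1)(r_2 - 1)$ for both $i = 1$ and $i = 2$; the right-hand side lies in~$\Gamma^3 \otimes \ft$ by the first relation, so both products give the same class in~$\gr^3 \otimes \ft$.

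The main obstacle will be showing that these are the only relations. Let~$A$ denote the proposed quotient; there is a surjection~$A \twoheadrightarrow \gr R(D_4, \k) \otimes \ft$, and a basis of~$A$ in degree~$n \ge 2$ consists of $c_1(r_1)^n$ and $c_1(r_2)^n$ together with $c_2(\Delta)^k$ if~$n = 2k$, or $c_1(r_1) c_2(\Delta)^k = c_1(r_2) c_2(\Delta)^k$ if~$n = 2k + 1 \ge 3$. In even degrees, linear independence follows from~$\cha$, which sends these to the basis $x^{2k}, y^{2k}, w^k$ of~$\W^*(D_4)/\id_{D_4}$. In odd degrees the character no longer distinguishes the~$c_1 c_2$ element from zero, so I would instead combine three restrictions: $\mathrm{Res}_{V_1}$ detects $c_1(r_1)^{2k+1}$, $\mathrm{Res}_{V_2}$ detects $c_1(r_2)^{2k+1}$, and $\mathrm{Res}_{C_4}$ detects $c_1(r_2) c_2(\Delta)^k$ as a non-zero element of $\gr R(C_4, \k) \otimes \ft$ via Proposition~\ref{prop-cyclic-real}(3) applied with~$N = 4$. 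The description of~$\ker \cha$ is then immediate: on even-degree basis elements $\cha$ is an isomorphism, while in odd degrees~$\ge 3$ it annihilates~$c_1(r_1) c_2(\Delta)^k$ since~$xw \in \id_{D_4}$, so the kernel is the ideal generated by~$c_1(r_1) c_2(\Delta)$ and~$c_1(r_2) c_2(\Delta)$.
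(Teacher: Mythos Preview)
Your argument is correct and close in spirit to the paper's, though organized differently. The paper first establishes the relations in $\gr R(D_4, \k) \otimes \ft$ by the same direct computation you give (indeed, from $(r_i - 1)\gamma^2(\Delta - 2) = -(r_1-1)(r_2-1)$ one obtains \emph{both} relations at once, the first because the right-hand side is thereby forced into~$\Gamma^3$), and then verifies linear independence in even and odd degrees alike purely by restriction to the two Klein four subgroups and to~$C_4$; the ideal~$\id_{D_4}$ is handled last and largely left to the reader. You instead compute~$\id_{D_4}$ first in full and then exploit~$\cha$ to settle the even degrees in~$\gr R$, which is a pleasant shortcut and nicely illustrates the point of the character.

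Two small quibbles. First, the formula $w_T((L-1)^n) = (1 + w_1(L)T)^{2^{n-1}}$ is not literally correct: since $L^2 = 1$ one has $(L-1)^n = (-2)^{n-1}(L-1)$, so the exponent should be~$(-2)^{n-1}$; modulo~$2$ and in degree~$2^{n-1}$ this makes no difference, and your conclusion $\theta_n(x^n) = x^{2^{n-1}}$ stands. Second, in deriving the second relation you say the right-hand side $-(r_1-1)(r_2-1)$ lies in~$\Gamma^3 \otimes \ft$ ``by the first relation''. This is unnecessary and slightly backwards: the left-hand side $(r_i - 1)\gamma^2(\Delta - 2)$ already lies in~$\Gamma^3$ by definition of the filtration, so the equality $c_1(r_1)c_2(\Delta) = c_1(r_2)c_2(\Delta)$ in~$\gr^3$ follows immediately from the two products being equal in~$R(D_4)$.
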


\begin{proof}
First we show that the relations stated actually hold in the graded
 representation ring. The class~$c_2(\Delta )$ is the image
 of~$\gamma^2(\Delta - 2)$ in~$\Gamma^2/\Gamma^3$. By definition we
 have
\begin{align*}
\gamma^2(\Delta - 2) & = \lambda^2(\Delta -1) \\
                     &= 1 - \lambda^1(\Delta ) + \lambda^2(\Delta ) \\
                     &= 1 - \Delta + r_1 r_2 \, . 
\end{align*}

On the other hand~$c_1(r_i)$ is the image of~$\gamma^1(r_i - 1) = r_i
- 1$. We compute, using the relation~$r_i \Delta = \Delta $, that
\begin{align*}
\gamma^2(\Delta - 2) \gamma^1(r_i - 1) & = r_1 + r_2 - r_1 r_2 - 1 \\
                           &= -(r_1-1)(r_2 - 1) \\
                           & = - \gamma^1(r_1-1)  \gamma^1( r_2 - 1)
                           \, . 
\end{align*}

Thus we see that~$\gamma^1(r_1-1) \gamma^1( r_2 - 1) \in \Gamma^3$,
so~$c_1(r_1) c_1(r_2) = 0$. On the other hand, we also observe
that~$\gamma^2(\Delta - 2) \gamma^1(r_i - 1)$ has the same expression
for~$i=1$ and~$i=2$, so that~$c_1(r_1) c_2(\Delta ) = c_1(r_2)
c_2(\Delta )$, as announced.

To show that there are no further relations, we study the restrictions
to various subgroups. First, there are two copies of~$C_2\times C_2$
in~$D_4$, and the restriction map has the form 
\[ \gr R(D_4, \k)\otimes \ft \longrightarrow \gr R(C_2\times C_2,
\k)\otimes \ft = \frac{\ft[a, b]} {(a^2 b + a b^2)} \, , 
\]
where the equality comes from the previous Proposition. For one copy
of~$C_2 \times C_2$, with a judicious choice of coordinates, we
have~$c_1(r_1)\mapsto a+b$, $c_1(r_2)\mapsto 0$, and~$c_2(\Delta
)\mapsto ab$. What is more, using a different subgroup, we obtain a
restriction map with the roles of~$r_1$ and~$r_2$ exchanged. This is
already enough to see that the elements~$c_1(r_1)^{2n}$,
$c_1(r_2)^{2n}$ and~$\Delta^n$ are linearly independent in~$\gr^{2n}
R(D_4, \k)\otimes \ft$. 

There is also a copy of~$C_4$ in~$D_4$, and this time the restriction
map looks like
\[ \gr R(D_4, \k)\otimes \ft \longrightarrow \gr R(C_4,
\k)\otimes \ft = \frac{\ft[x, y]} {(x^2)} \, , 
\]
where the equality was proved in
Example~\ref{ex-cyclic-over-reals}. Under this map we
have~$c_1(r_1)\mapsto x$, $c_1(r_2)\mapsto x$, and~$c_2(\Delta
)\mapsto y$. The three restriction maps combined allow us to see
that~$c_1(r_1)^{2n+1}$, $c_1(r_2)^{2n+1}$, and~$c_1(r_1) \Delta^n$ are
linearly independent in~$\gr^{2n+1} R(D_4, \k)$. It follows that the
relations are precisely as stated.

To compute the ideal~$I_{D_4}$, we use Wu's formula again, which
gives~$Sq^1 w_2(\Delta )= w_1(\Delta ) w_2(\Delta )= w_1(r_1 r_2)
w_2(\Delta ) = (w_1(r_1) + w_1(r_2)) w_2(\Delta )$. As a result~$$Sq^1(
w_1(r_1) w_2(\Delta )  ) = w_1(r_1)^2 w_2(\Delta ) + w_1(r_1)^2
w_2(\Delta ) + w_1(r_1 ) w_1(r_2) w_2(\Delta ) = 0 \, . $$ It follows
that~$w_1(r_1) w_2(\Delta ) \in I_{D_4}$, and likewise for~$w_1(r_2)
w_2(\Delta )$. To show that there are no further generators needed
for~$I_{D_4}$, we study the restrictions to the above subgroups. This
is left to the reader.
\end{proof}

\section{Applications to Milnor~$K$-theory
}\label{sec-milnor}

\subsection{Technicalities on profinite groups}

When dealing with Galois theory, we shall encounter profinite
groups. Let us indicate our conventions.

Let~$G$ be profinite. A representation of~$G$ over~$\k$, by convention,
means a finite-dimensional~$\k$-vector space with an action of~$G$
which factors through a quotient~$G/U$ where~$U$ is open. Such a~$U$
is closed and of finite index, so that~$G/U$ is finite. It follows
that the category of representations of~$G$ is semi-simple.

The Grothendieck group of this category is what we call~$R(G, \k)$. As
an abelian group, it is free with a basis given by the irreducible
representations. The maps~$R(G/U, \k) \to R(G, \k)$ are thus injective,
when~$U$ is as above, and it follows easily that 
\[ R(G, \k) = \operatorname{colim}_U R(G/U, \k) \, .   \]
The colimit is taken over a directed set, since~$U\cap V$ is open
when~$U$ and~$V$ are; so we can think of it essentially as a union. It
is clear that~$R(G, \k)$ is a~$\lambda $-ring.

The ``line elements'' in~$R(G, \k)$, as in~\cite{fultonlang}, are
the~$1$-dimensional representations, which form the group~$Hom(G,
\k^\times)$ where continuous homomorphisms are meant (using the
discrete topology of~$\k^\times$). Lemma~\ref{lem-degree-1} applies to
profinite groups with the same proof. 

The cohomology group~$H^i(G, \ft)$ can be identified with~$\lim_U
H^i(G/U, \ft)$ where again~$U$ runs among the normal, open subgroups
of~$G$. It follows that the Steenrod algebra acts on~$H^*(G, \ft)$. 

Finally, we define~$\W^*(G)$ to be the subring of~$H^*(G, \ft)$
generated by the Stiefel-Whitney classes of representations of~$G$,
where ``representation'' is understood in the above sense. By 
Lemma~\ref{lem-sw-involves-only-reps}, (1), this coincides with the
definition of~$\W^*(G) = \W^*(BG)$ when~$G$ is finite. 

The ideal~$\id_G$ in~$\W^*(G)$ can now be defined exactly as before.

\subsection{Milnor~$K$-theory and the graded representation ring} \label{subsec-milnor-k-theory}
Let us briefly recall the definition of mod~$2$ Milnor $K$-theory
(using the notation which is classically employed for Milnor
$K$-theory itself). Let~$F$ be any field. First one defines~$k_1(F)$
to be the~$\ft$-vector space~$F^\times/ (F^\times)^2$, written
additively, and the letter~$\ell$ is used to denote the identity 
\[ \ell \colon F^\times/ (F^\times)^2 \longrightarrow k_1(F) \, ,  \]
so that~$\ell(ab)= \ell(a) + \ell(b)$. Then one considers the tensor
algebra~$T^*(k_1(F))$, and the ideal~$M$ generated by the Matsumoto
relations, that is~$\ell(a) \ell(b) = 0$ whenever~$a + b= 1$. The
algebra~$k_*(F) = T^*(k_1(F))/M$, which is commutative, is the mod~$2$ Milnor
$K$-theory of~$F$.

\begin{thm} \label{thm-map-K-to-gr} Let~$F$ be any field, and let~$\k$
  be any field of characteristic different from~$2$.
Let~$\bar F$ be the separable closure of~$F$, and let~$G = Gal(\bar F
/ F)$ be the absolute Galois group of~$F$. Then there is a natural map
\[ \ourmap \colon k_*(F) \longrightarrow  \gr R(G, \k)\otimes \ft \, .   \]
If the characteristic of~$F$ is not~$2$, and if~$\k$ possesses an
embedding into~$\r$, then there is a commutative square
\[ \begin{CD}
k_*(F) @>{\ourmap}>> \gr R(G, \k)\otimes \ft \\
@V{h_F}VV             @VV{\cha}V  \\
H^*(G) @>>> H^*(G) / \id_G \, . 
\end{CD}
\]
\end{thm}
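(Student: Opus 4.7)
The natural candidate is forced by the commutative square: on degree-one generators, set $\ourmap(\ell(a)) = c_1(\chi_a)\otimes 1$, where $\chi_a\colon G\to\{\pm 1\}\hookrightarrow\k^\times$ is the Kummer character $\sigma\mapsto\sigma(\sqrt a)/\sqrt a$, trivial precisely when $a\in(F^\times)^2$. This is a continuous one-dimensional representation of $G$ over $\k$, and by Lemma~\ref{lem-degree-1} the first Chern class identifies line elements with $\gr^1 R(G,\k)$. Since $\chi_{ab}=\chi_a\chi_b$ and $\chi_{b^2}$ is trivial, the assignment $a\mapsto c_1(\chi_a)\otimes 1$ descends to a homomorphism
\[
k_1(F)=F^\times/(F^\times)^2\longrightarrow\gr^1 R(G,\k)\otimes\ft,\qquad \ell(a)\longmapsto c_1(\chi_a)\otimes 1.
\]
Extend multiplicatively to a ring homomorphism out of the tensor algebra $T^*(k_1(F))$.

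The heart of the argument is to show this extension factors through the Matsumoto relations, i.e.\ that $c_1(\chi_a)c_1(\chi_{1-a})=0$ in $\gr^2 R(G,\k)\otimes\ft$ whenever $a\ne 0,1$. This is the main obstacle. One cannot read the relation off the biquadratic quotient $(\z/2)^2$, since by Proposition~\ref{prop-ideal-elementary-abelian} (and its rep-ring analogue) the product $c_1(\varepsilon_1)c_1(\varepsilon_2)$ is nonzero in $\gr R((\z/2)^2,\r)\otimes\ft$. The correct reduction proceeds through the $W$-group $\G$: by Mináč--Spira's theory, whenever $a+b=1$ and both $\chi_a,\chi_b$ are nontrivial, the subgroup of $\G$ generated by their lifts is forced to factor through a $2$-group isomorphic to $\z/4$ or to the dihedral group $D_4$. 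These are precisely the graded representation rings worked out in Example~\ref{ex-cyclic-over-reals} and Proposition~\ref{prop-graded-D4}, and in both one finds the required vanishings $c_1(\varepsilon)^2=0$ in $\gr R(\z/4,\r)\otimes\ft$ and $c_1(r_1)c_1(r_2)=0$ in $\gr R(D_4,\r)\otimes\ft$. Pushing these relations forward along the $\lambda$-ring inclusions $R(\z/4,\k)\hookrightarrow R(\G,\k)$ or $R(D_4,\k)\hookrightarrow R(\G,\k)$ and then through $R(\G,\k)\hookrightarrow R(G,\k)$ gives the vanishing of $c_1(\chi_a)c_1(\chi_{1-a})$ in $\gr^2 R(G,\k)\otimes\ft$. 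This is consistent with the biquadratic nonvanishing because the $\gamma$-filtration of $R(G,\k)$ is strictly finer than that pulled back from $(\z/2)^2$: the additional elements in $\Gamma^3 R(G,\k)$ supplied by the $\z/4$ or $D_4$ representations are what absorb the product into higher filtration.

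Once $\ourmap$ is well-defined, the $W$-group version is immediate: the construction factors through $R(\G,\k)$ by design, and since $\ourmap$ is generated by first Chern classes, its image lies in the decomposable part $(\gr R(\G,\k)\otimes\ft)_{dec}$. For the commutative square, assume $\k$ embeds in $\r$; then $\chi_a$ becomes a real line bundle over $BG$, and Theorem~\ref{thm-chern-character} yields $\cha(c_1(\chi_a))=w_1(\chi_a)$ in $\W^*(G)/\id_G$. The classifying map $BG\to BO_1$ for $\chi_a$ represents, by Kummer theory, the class $h_F(\ell(a))\in H^1(G,\ft)$, so that $w_1(\chi_a)=h_F(\ell(a))$. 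Hence the square commutes on degree-one generators and, all four arrows being ring homomorphisms, commutes in every degree. The identical argument, using $H^*(G,\ft)=H^*(\G,\ft)_{dec}$, establishes the $\G$-version of the square.
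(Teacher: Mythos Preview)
Your approach is the paper's: define $\ourmap$ in degree one via the Kummer character and $c_1$, extend to the tensor algebra, and verify the Steinberg relation by factoring $(\chi_a,\chi_{1-a})$ through a finite quotient isomorphic to $D_4$ or $\z/4$, invoking Proposition~\ref{prop-graded-D4} and Proposition~\ref{prop-cyclic-real}. The commutative square is handled exactly as you say.

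There is a gap in the $\z/4$ step. You cite $c_1(\varepsilon)^2=0$ in $\gr R(\z/4,\r)\otimes\ft$ and then immediately pass to $R(\z/4,\k)$ for arbitrary $\k$; but Proposition~\ref{prop-cyclic-real} and Example~\ref{ex-cyclic-over-reals} are stated only over~$\r$, and $R(\z/4,\k)$ is \emph{not} the same $\lambda$-ring when $\sqrt{-1}\in\k$. The paper closes this with a case split: if $\sqrt{-1}\notin\k$ then indeed $R(\z/4,\k)=R(\z/4,\r)$ as $\lambda$-rings and your citation applies; if $\sqrt{-1}\in\k$ then $\chi_a$ factors through $\z/4$ as $\rho^{\otimes 2}$ for a faithful one-dimensional $\rho$, whence already $c_1(\chi_a)=2c_1(\rho)=0$ after tensoring with $\ft$, by Proposition~\ref{prop-cyclic-alg-closed}. (Proposition~\ref{prop-graded-D4} is already stated for arbitrary $\k$, so the $D_4$ branch needs no repair.)

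Two smaller points. Routing through $\G$ is unnecessary here---the paper works directly with the inflation maps $R(D_4,\k)\to R(G,\k)$ and $R(\z/4,\k)\to R(G,\k)$ coming from the finite Galois quotients of $G$---and your detour tacitly imposes $\operatorname{char} F\ne 2$, since $\G$ is only set up in that case. Also, ``the subgroup of $\G$ generated by their lifts'' should read ``a quotient of $G$ through which the characters factor'': the small group sits as a \emph{quotient}, which is precisely why inflation on representation rings runs in the direction you need.
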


Here the map~$h_F$ is the one originally defined by Milnor. This map
is an isomorphism, as the Milnor conjecture, now a theorem by
Voevodsky, states. In particular~$H^*(G)$ is generated
by~$1$-dimensional classes, and thus by Stiefel-Whitney classes, so
that~$H^*(G) = \W^*(G)$. In any case it is trivially the case
that~$h_F$ takes its values in~$\W^*(G)$, even without assuming
knowledge of the Voevodsky theorem.

\begin{proof}
We begin by defining a map 
\[ \tilde\ourmap \colon F^\times/ (F^\times)^2 \longrightarrow Hom(G,
\k^\times) \, .   \]
If~$a \in F^\times$, we consider the field extension~$F[\sqrt{a}]$
(within the fixed separable closure~$\bar F$). Then for~$\sigma \in
G$, we have~$\sigma (\sqrt a) /\sqrt a = \pm 1$. Thus we define a continuous
homomorphism~$\tilde\ourmap(a) \colon G \to \k^\times$ by
setting~$\tilde\ourmap(a) (\sigma ) = \sigma (\sqrt a)/\sqrt a$. It is immediate
that~$\tilde\ourmap$ is a homomorphism, and depends only on the class
of~$a$ modulo squares.

Since~$Hom(G, \k^\times)$ is none other than the group
of~$1$-dimensional representations of~$G$ over~$\k$, under the tensor
product operation, we see from Lemma~\ref{lem-degree-1} that we may
in fact define a map 
\[ \ourmap \colon k_1(F) \longrightarrow \gr^1 R(G, \k)\otimes \ft    \]
by~$\ourmap = c_1 \circ \tilde\ourmap \circ \ell^{-1}$ (here~$c_1$ is the
first Chern class).

This extends to a map~$T^*(k_1(F)) \to \gr R(G, \k)\otimes \ft$, and to
factor it into a map on~$k_*(F)$ we need to show that the
elements~$\ell(a) \ell(b)$ are sent to~$0$ when~$a+b= 1$. Let us use
an elementary result from Galois theory: when~$a+b=1$, and when~$a$
and~$b$ are linearly independent in~$F^\times / (F^\times)^2$
(over~$\ft$), then the fields~$F[\sqrt{a}]$ and~$F[\sqrt{b}]$ are both
contained in a field~$E$ such that~$E/F$ is Galois with~$Gal(E/F)=
D_4$, the dihedral group of order~$8$. Moreover, the
homomorphisms~$\tilde\ourmap (a)$ and~$\tilde\ourmap (b)$ factor
through~$D_4$, and as representations of this group they correspond
to~$r_1$ and~$r_2$ in the notation of
Proposition~\ref{prop-graded-D4}. However this very Proposition states
that
\[ c_1(r_1) c_1(r_2) = 0 = \ourmap(\ell(a)) \ourmap(\ell(b)) \, .   \]
We also need to take care of the case~$a= b \ne 0$ in the vector
space~$F^\times / (F^\times)^2$. Again, elementary Galois theory tells
us that in this situation, the field~$F[\sqrt{a}]$ is contained in a
field~$E$ such that~$E/F$ is Galois with Galois group~$\z/4$. Suppose
first that~$\k$ does not contain~$\sqrt{-1}$. Then~$R(\z/4, \k)$ is
the same as~$R(\z/4, \r)$. Moreover~$\tilde \ourmap(a)$ factors through~$\z/4$
as the representation~$\varepsilon $, in the notation of
Proposition~\ref{prop-cyclic-real}. This Proposition states that
\[ c_1( \varepsilon )^2 = 0 = \ourmap(\ell(a))^2 \, ,   \]
as we needed. Now suppose that alternatively~$\k$ does
contain~$\sqrt{-1}$. Then~$R(\z/4, \k)$ is the same as~$R(\z/4, \C)$,
and $\tilde \ourmap (a)$ factors through~$\z/4$ as~$\rho^{\otimes 2}$ in the
notation of Proposition~\ref{prop-cyclic-alg-closed}. Thus 
\[ c_1( \rho^{\otimes 2}) = 2 c_1(\rho )  = 0
= \ourmap(\ell(a)) \, ,   \]
since we have tensored with~$\ft$ (note however that this class
squares to zero even without tensoring, from the same
Proposition). Thus~$\ourmap$ factors through~$k_*(F)$ in any case.

The commutativity of the square follows from the property~$\cha
(c_1(r)) = w_1(r)$ for any representation~$r$. Indeed, Milnor's map
sends~$\ell(a)$ to~$w_1( \tilde \ourmap(a))$ (when~$\k=\r$).
\end{proof}

Of course one may wonder whether the map~$\ourmap$ is an isomorphism,
just like~$h_F$ is, and for what choices of~$\k$. During the course of
the proof we have seen that there will be a non-trivial kernel
when~$\k$ contains~$\sqrt{-1}$ (in some cases, compare the examples
below), so we will focus our attention on other fields. In general, the
question is a hard one, and we will study a certain variant which
lends itself to computation more easily. Assume that~$F$ has
characteristic~$\ne 2$. Following~\cite{minacspira}, let~$F_q$ be the
quadratic closure of~$F$ (the compositum within~$\bar F$ of all finite
extensions of~$F$ whose degree is a power of~$2$). Let~$Q =
Gal(F_q/F)$, which is a quotient of~$G = Gal(\bar F/ F)$. Then define
\[ \G = Q / Q^4[Q^2, Q] \, .   \]
This group is called the~$W$-group of~$F$ because of its relation with
the Witt ring. We are interested in~$\G$ because it is much easier to
deal with than~$G$ itself; for example when~$F^\times/ (F^\times)^2$
is finite then~$\G$ is also finite. However, the cohomological
information of~$F$ is preserved, since 
\[ H^*(G) = H^*(\G)_{dec} \, .   \]
(See~\cite{ademminac}, Theorem 3.14.) We then think of the following
Theorem as an amendment to Theorem~\ref{thm-map-K-to-gr}.

\begin{thm} \label{thm-map-K-to-grW} Let~$F$ and~$\k$ be fields of
  characteristic different from~$2$, 
and let~$\G$ be the~$W$-group of~$F$.  Then there is a natural map
\[ \ourmap \colon k_*(F) \longrightarrow  (\gr R(\G, \k)\otimes \ft)_{dec} \, .   \]
If~$\k$ possesses an embedding into~$\r$, then there is a commutative
square 
\[ \begin{CD}
k_*(F) @>{\ourmap}>> (\gr R(\G, \k)\otimes \ft)_{dec} \\
@V{h_F}VV             @VV{\cha}V  \\
H^*(\G)_{dec} @>>> H^*(\G)_{dec} / \id_\G \, . 
\end{CD}
\]
\end{thm}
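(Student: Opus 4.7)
The plan is to mimic the proof of Theorem~\ref{thm-map-K-to-gr}, but with the $W$-group~$\G$ replacing the absolute Galois group, and to verify that the relevant auxiliary Galois extensions (namely the~$\z/4$ and~$D_4$ extensions used to enforce the Matsumoto relations) factor through~$\G$. First I would define
\[ \tilde\ourmap\colon F^\times/(F^\times)^2 \longrightarrow Hom(\G,\k^\times),\qquad \tilde\ourmap(a)(\sigma) = \sigma(\sqrt a)/\sqrt a, \]
noting that this character has order dividing~$2$ and therefore factors through the maximal elementary abelian~$2$-quotient of~$Q = Gal(F_q/F)$, hence {\em a fortiori} through~$\G = Q/Q^4[Q^2,Q]$. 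Composing with the first Chern class (via Lemma~\ref{lem-degree-1}) yields a map $\ourmap\colon k_1(F)\to \gr^1 R(\G,\k)\otimes\ft$, which extends uniquely to the tensor algebra~$T^*(k_1(F))$. Since the image is by construction generated by elements of degree~$1$, it lands in the decomposable subring~$(\gr R(\G,\k)\otimes\ft)_{dec}$.

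The main step is to show that the Matsumoto relations are killed. The key observation is that both~$\z/4$ and~$D_4$ are quotients of~$\G$: indeed each has exponent dividing~$4$, and in each case~$Q^2$ is central (trivially for~$\z/4$, and because~$\langle r^2\rangle$ is the center of~$D_4$), so the relations~$Q^4 = 1$ and~$[Q^2,Q]=1$ are satisfied. Thus for~$a,b\in F^\times$ with~$a+b=1$ and~$a,b$ linearly independent mod squares, elementary Galois theory supplies a~$D_4$-extension~$E/F$ containing both~$F[\sqrt a]$ and~$F[\sqrt b]$, and the corresponding surjection~$\G\twoheadrightarrow D_4$ pulls back the characters~$r_1,r_2$ of Proposition~\ref{prop-graded-D4} to~$\tilde\ourmap(a),\tilde\ourmap(b)$. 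Proposition~\ref{prop-graded-D4} then gives~$c_1(r_1)c_1(r_2)=0$, hence~$\ourmap(\ell(a))\ourmap(\ell(b))=0$. Similarly for~$a=b$, the extension~$F[\sqrt{a}]$ sits inside a~$\z/4$-extension, and the surjection~$\G\twoheadrightarrow\z/4$ lets us import the relation~$c_1(\varepsilon)^2=0$ from Proposition~\ref{prop-cyclic-real} (since~$\k\not\ni\sqrt{-1}$ follows from the embedding hypothesis when present; otherwise, the computation of Proposition~\ref{prop-cyclic-alg-closed} combined with tensoring with~$\ft$ handles the remaining case as in the proof of Theorem~\ref{thm-map-K-to-gr}).

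For the commutative square, under the hypothesis that~$\k$ embeds in~$\r$, the character~$\cha$ satisfies~$\cha(c_1(r)) = w_1(r)$ by Theorem~\ref{thm-chern-character}. On the other hand Milnor's map~$h_F$ sends~$\ell(a)$ to~$w_1(\tilde\ourmap(a))$ viewed in~$H^1(\G,\ft)_{dec} = H^1(G,\ft)$, so commutativity in degree~$1$ is immediate, and extends multiplicatively to all degrees. One must check that~$h_F$ indeed takes values in~$\W^*(\G)_{dec}$ (and thus in~$H^*(\G)_{dec}/\id_\G$), but this is automatic since~$h_F$ is generated by first Stiefel-Whitney classes of representations of~$\G$.

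The only real obstacle is the verification that~$\z/4$ and~$D_4$ are quotients of~$\G$, which amounts to the small group-theoretic computation above; everything else is a straightforward transcription of the proof of Theorem~\ref{thm-map-K-to-gr}, replacing~$G$ by~$\G$ and inserting ``${}_{dec}$'' throughout.
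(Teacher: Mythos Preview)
Your proposal is correct and follows the same route as the paper: the paper's own proof is a single sentence stating that the argument for Theorem~\ref{thm-map-K-to-gr} goes through \emph{mutatis mutandis} once one knows that the extension corresponding to~$\G$ contains all~$D_4$-extensions, for which it cites~\cite{minacspira}, Corollary~2.18. You supply this fact directly via the small group-theoretic computation (exponent~$\le 4$ and squares central in~$D_4$ and~$\z/4$, hence any surjection~$Q\twoheadrightarrow D_4$ or~$\z/4$ kills~$Q^4[Q^2,Q]$), which is equivalent; your phrasing ``$Q^2$ is central'' should be read as ``the image of~$Q^2$ in the quotient is central there'', but the intent is clear.
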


\begin{proof}
  Since the extension of~$F$ corresponding to~$\G$ contains all the
  extensions with Galois group~$D_4$, $\mathbf{Z}/4$ or~$\mathbf{Z}/2$ (\cite{minacspira}, Corollary
  2.18), the same proof works {\em mutatis mutandis}.
\end{proof}

We are already capable of proving that there are very few
possibilities for~$\k$:

\begin{lem}
  Let~$\k$ have characteristic~$0$. 
\begin{itemize}
\item Either~$\k$ contains~$\sqrt{-1}$, and the inclusion~$\q(\sqrt{-1}) \to
  \k$ induces an isomorphism~$R(\G, \q(\sqrt{-1})) = R(\G, \k)$; in
  particular one has~$R(\G, \k) = R(\G, \C)$.

\item Or, $\k$ does not contain~$\sqrt{-1}$, and the inclusion~$\q \to
  \k$ induces an isomorphism~$R(\G, \q) = R(\G, \k)$; in particular one
  has~$R(\G, \k) = R(\G, \r)$.
\end{itemize}
\end{lem}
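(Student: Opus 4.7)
The plan is to exploit the fact that the $W$-group $\G = Q/Q^4[Q^2, Q]$ has exponent dividing $4$, so that every irreducible complex representation of any finite quotient of $\G$ has character values in $\q(\sqrt{-1}) = \q(\zeta_4)$. I would first invoke Brauer's classical splitting-field theorem to conclude that $\q(\sqrt{-1})$ is in fact a splitting field for every finite quotient of $\G$. Since $R(\G, -)$ is by definition the colimit of the representation rings of those finite quotients, all arguments reduce to the finite case.

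For the first case, when $\sqrt{-1} \in \k$, one has $\q(\sqrt{-1}) \subset \k$. Since $\q(\sqrt{-1})$ is a splitting field, irreducible $\q(\sqrt{-1})$-representations of $\G$ remain irreducible and pairwise non-isomorphic after extension of scalars to $\k$, and no new irreducibles appear. Hence $R(\G, \q(\sqrt{-1})) \to R(\G, \k)$ is an isomorphism; specializing to $\k = \C$ then gives the remaining ``in particular'' assertion.

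For the second case, when $\sqrt{-1} \notin \k$, one has $\k \cap \q(\sqrt{-1}) = \q$, and the statement amounts to the vanishing of every Schur index of $\G$ over $\q$. The essential input is the structural information on W-groups from \cite{minacspira}, from which in particular $\G$ contains no copy of the quaternion group $Q_8$. Combined with the fact that $\G$ is a $2$-group of nilpotency class $\le 2$ with $\G^2$ central---so that every irreducible complex representation of $\G$ is monomial, induced from a one-dimensional character of a normal subgroup of index equal to the dimension---this rules out precisely the quaternionic ($\mathbb{H}$-type) case of the Frobenius-Schur trichotomy. Thus every irreducible complex representation of $\G$ is either of real type (with $\q$-valued character) or of complex type (character not fixed by complex conjugation), and in both cases a $\q$-model can be exhibited directly from the induced-representation description together with the Galois descent along $\q(\sqrt{-1})/\q$. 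Applying the same analysis with $\r$ in place of $\k$ yields $R(\G, \q) = R(\G, \r)$, hence the parenthetical claim.

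The main obstacle is clearly the Schur-index vanishing in the second case. I expect the cleanest route combines the monomiality of irreducibles above with the explicit computations already performed for $\z/4$ and $D_4$ in Propositions \ref{prop-cyclic-real} and \ref{prop-graded-D4}, where the independence of the representation ring from the choice of real field $\k$ has been verified by direct calculation; these small cases are essentially the atoms from which any irreducible representation of $\G$ is assembled via induction.
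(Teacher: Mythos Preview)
The paper's entire proof is a single sentence: ``Every element in~$\G$ has order dividing~$4$.'' For the first bullet this is indeed all that is needed, via Brauer's theorem, and your treatment matches the paper's. For the second bullet you go considerably further, and with reason: exponent~$4$ alone is not enough. The group~$Q_8$ has exponent~$4$, yet $R(Q_8, \q) \ne R(Q_8, \q_7)$ even though~$\sqrt{-1}\notin \q_7$, because the quaternion algebra~$(-1,-1)$ splits over~$\q_7$ but not over~$\q$, so the two-dimensional representation has Schur index~$2$ over~$\q$ and~$1$ over~$\q_7$. Thus the paper's one-line argument, read literally, is incomplete; it must be tacitly relying on extra structure of $W$-groups beyond the exponent.

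Your proposed fix --- reducing to the vanishing of all Schur indices over~$\q$ --- is the correct reformulation, but the justification you sketch has gaps. First, the assertion that~$\G$ contains no copy of~$Q_8$ needs a precise citation or argument, and strictly speaking one must exclude~$Q_8$ from the finite \emph{quotients} of~$\G$ through which representations factor, not merely from~$\G$ as a subgroup. Second, and more seriously, ruling out Frobenius--Schur indicator~$-1$ only forces the Schur index over~$\r$ to be~$1$; a real-type representation can in principle still have Schur index~$2$ over~$\q$ when its endomorphism algebra is a quaternion algebra split at the infinite place but ramified at finite primes. The sentence ``in both cases a $\q$-model can be exhibited directly from the induced-representation description'' is therefore exactly where the substance lies, and it is not substantiated by what you wrote. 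To close this gap you would need to show, for instance, that every real-type irreducible of~$\G$ is actually induced from a character of a subgroup taking values in~$\{\pm 1\}$ (as one verifies by hand for~$D_4$), or otherwise control the rational Schur index directly from the structure of~$W$-groups.
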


\begin{proof}
Every element in~$\G$ has order dividing~$4$.
\end{proof}

So we are essentially left with the cases~$\k=\C$ and~$\k=\r$. We have
already dismissed the choice~$\k=\C$.

\begin{lem} \label{lem-ourmap-iso-lowdegrees}
When~$\k=\q$ or~$\r$, the map 
\[ \ourmap \colon k_*(F) \longrightarrow  (\gr R(\G, \k)\otimes \ft)_{dec} \]
is always surjective, and is an isomorphism in degrees~$1$ and~$2$.
\end{lem}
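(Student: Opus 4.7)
The plan is to handle surjectivity and the degree-$1$ statement directly from Kummer theory and Lemma~\ref{lem-degree-1}, and then to obtain injectivity in degree~$2$ by combining the commutative square of Theorem~\ref{thm-map-K-to-grW} with the injectivity of Milnor's map~$h_F$ in degree~$2$.

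First I would treat degree~$1$. The degree-$1$ part of the target is $\gr^1 R(\G, \k) \otimes \ft$, which by Lemma~\ref{lem-degree-1} is isomorphic via $c_1$ to the group $Hom(\G, \k^\times)$, tensored with~$\ft$. Because every element of~$\G$ has order dividing~$4$ and $\k$ does not contain~$\sqrt{-1}$, this group of continuous homomorphisms coincides with $Hom(\G, \{\pm 1\})$, which is already killed by~$2$. Since characters to $\{\pm 1\}$ factor through $\G/\G^2[\G, \G] = Q/Q^2 = G/G^2$, Kummer theory identifies this with $F^\times/(F^\times)^2 = k_1(F)$, and unwinding the definition of $\tilde\ourmap$ shows that $\ourmap$ in degree~$1$ is precisely this chain of isomorphisms composed with $c_1$. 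Surjectivity of $\ourmap$ in all degrees is then immediate: the target $(\gr R(\G, \k) \otimes \ft)_{dec}$ is by definition generated by its degree-$1$ part, and $\ourmap$ is a ring homomorphism.

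For injectivity in degree~$2$, I would exploit the commutative square of Theorem~\ref{thm-map-K-to-grW}. The key observation is that $\id_\G$ vanishes in degrees~$1$ and~$2$: since $\theta_1 = \theta_2 = \mathrm{id}$, any element $x$ of degree $\le 2$ lying in $\id_\G$ satisfies $x = \theta_{|x|}(x) = 0$. Thus the bottom arrow of the square is the identity on $H^2(\G)_{dec}$, and commutativity reduces in degree~$2$ to the equality $\cha \circ \ourmap = h_F$. Since $h_F$ is injective in degree~$2$ by Merkurjev's theorem (and in fact an isomorphism by Voevodsky), it follows that $\ourmap$ is injective in degree~$2$. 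Combined with the surjectivity already established, this gives the degree-$2$ isomorphism.

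The main obstacle is not computational but rather conceptual: one must notice that the vanishing of $\theta_n$ for $n = 1, 2$ collapses the right-hand side of the commutative square in low degrees, reducing the lemma to the (deep but known) injectivity of Milnor's map. Once this is observed, the argument is entirely formal, requiring no input beyond Kummer theory, Lemma~\ref{lem-degree-1}, and the Merkurjev--Voevodsky theorem.
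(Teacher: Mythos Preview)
Your proposal is correct and follows essentially the same route as the paper: degree~$1$ via Lemma~\ref{lem-degree-1} and the choice of~$\k$, surjectivity because the target is generated in degree~$1$, and degree-$2$ injectivity via the commutative square of Theorem~\ref{thm-map-K-to-grW}, the vanishing of~$\id_\G$ in degree~$2$ (since~$\theta_2 = \mathrm{id}$), and Merkurjev's theorem. Your degree-$1$ argument is more explicit than the paper's terse ``by choice of~$\k$'', spelling out the Kummer-theory identification, but the structure is identical.
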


\begin{proof}
The map~$\ourmap$ is an isomorphism in degree~$1$ by choice of~$\k$, as
already alluded to (cf Lemma~\ref{lem-degree-1}). The target of this
map is generated by elements of degree~$1$ by definition, so~$\ourmap
$ is surjective.

In degree~$2$, we consider the commutative square of
Theorem~\ref{thm-map-K-to-grW}. The map~$h_F$ is injective (in
degree~$2$ this is a theorem of Merkurjev's, which is used in the
general proof of Voevodsky-Rost). The map~$H^2(\G)_{dec} \to
H^2(\G)_{dec}/\id_\G$ is also injective, since~$\theta_2$ is the
identity. It follows that~$\ourmap$ is injective in degree~$2$.
\end{proof}

\subsection{Examples}

We fix the choice~$\k=\r$ for definiteness. We shall give a collection
of examples of fields~$F$, assumed to be of characteristic~$\ne 2$,
for which the map 
\[ \ourmap\colon k_*(F) \longrightarrow (\gr R(\G, \r) \otimes \ft)_{dec}  \]
is an isomorphism. In each case this will follow either directly from
Lemma~\ref{lem-ourmap-iso-lowdegrees} or from the fact that
the~$W$-group~$\G$ is a finite group whose graded representation ring
we have been able to compute. Note that there is yet no example of
field~$F$ such that~$\ourmap$ is not an isomorphism.

\subsubsection{$F$ finite.} In this case the group~$\G$ is~$\z/4$, so that both
the Galois cohomology and the Milnor~$K$-theory of~$F$ are
concentrated in degrees~$\le 2$; the same can thus be said of~$(\gr
R(\G, \r) \otimes \ft)_{dec}$, from the surjectivity statement in
Lemma~\ref{lem-ourmap-iso-lowdegrees}. The same Lemma shows
that~$\ourmap$ is an isomorphism. This is consistent with
Proposition~\ref{prop-cyclic-real}.

\subsubsection{$F$ real closed.} In this case~$\G = \z/2$, so~$H^*(\G)= \ft[t]$
with~$t$ of degree~$1$.  The map~$\ourmap$ is clearly an isomorphism,
by Lemma~\ref{lem-ourmap-iso-lowdegrees} and
Proposition~\ref{prop-cyclic-real} combined.

\subsubsection{$F$ a local field.} In this case~$k_*(F)$ is again
concentrated in degrees~$\le 2$, so~$\ourmap$ is an isomorphism. 

\subsubsection{$F$ a global field.} We use the commutative diagram 
\[ \begin{CD}
k_*(F) @>{\ourmap}>>  (\gr R(\G, \r) \otimes \ft)_{dec} \\
@VVV                 @VVV \\
\bigoplus_v k_*(F_v) @>{\oplus \ourmap_v}>> \bigoplus_v  (\gr R(\G_v, \r) \otimes \ft)_{dec}
\end{CD}
\]
Here the direct sums extend over all the real completions~$F_v$ of the
field~$F$, while~$\G_v$ is the~$W$-group of~$F_v$, and~$\ourmap_v$ has
an obvious meaning. The vertical map on the left is an isomorphism in
degrees~$\ge 3$ by Tate (see~\cite{milnorconjecture}, Appendix), and
the maps~$\ourmap_v$ are isomorphisms by the real case already
considered, so~$\ourmap$ is injective, as well as surjective, in those
degrees. Finally~$\ourmap$ is an isomorphism in all degrees in this
case, too.

\subsubsection{Some~$C$-fields}
When~$F$ is a~$C$-field of level~$1$, the~$W$-group~$\G$ is~$\prod_{i
  \in I} \z/4$, where~$I$ is a (possibly infinite) basis of~$F^\times/
(F^\times)^2$. When~$I$ has order~$n$, then the ring~$H^*(\G)_{dec}$
is an exterior algebra on~$n$ generators in degree~$1$, so it is
concentrated in degrees~$\le n$. For~$n \le 2$, it follows
that~$\ourmap$ is an isomorphism. For~$n=3$, a computer calculation
which we will not reproduce here shows that~$\ourmap$ is again an
isomorphism. It remains an open problem to compute the graded
representation ring of~$(\z/4)^n$, and the character~$\cha$ is of no
help here: one can check that~$I_{(\z/4)^n} = H^{\ge 3}((\z/4)^n)$.

\subsubsection{Any field whose~$W$-group is~$D_4$} This is the case of
the field~$F= \r(\!(u)\!)(\!(v)\!)$ for example. From
Proposition~\ref{prop-graded-D4}, it follows that~$\ourmap$ is an
isomorphism. 

\subsubsection{Any field with a universal~$W$-group}
Given any set~$I$, there is a universal~$W$-group which we will write
here~$\G_I$. When~$F$ is a field such that~$F^\times / (F^\times)^2$
has a basis in bijection with~$I$, then its~$W$-group~$\G$ is a
quotient of~$\G_I$. It may happen that~$\G = \G_I$ (see~\cite{gao} for
examples). In this case, from the fact that~$H^2(\G_I) = 0$, we see
that the Galois cohomology is concentrated in degrees~$\le 1$,
so~$\ourmap$ is certainly an isomorphism.

\subsubsection{Any field for which~$\ell(-1)$ is not a zero divisor}
To treat this case, we begin by establishing a simple formula for the
action of the operations~$\theta_n$ in the case of Galois
cohomology. The formula was anticipated in the Introduction. 

\begin{lem} \label{lem-theta-n-for-fields}
Let~$G$ be the absolute Galois group of a field~$F$, and let~$x \in
H^n(G)$. Then~$\theta_n(x) = \ell(-1)^{2^{n-1} - n} x$.
\end{lem}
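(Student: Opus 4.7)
The plan is as follows. Since $H^*(G,\ft)$ is generated as a ring by its degree-one part (this is the Milnor conjecture, now Voevodsky's theorem), and since $\theta_n$ is additive, it suffices to establish the identity when $x = t_1 t_2 \cdots t_n$ with each $t_i \in H^1(G)$. So I would first reduce to this case.

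Next, I would compute $\theta_n(t_1 \cdots t_n)$ using the explicit description of $\theta_n$ developed just before Theorem~\ref{thm-theta-n}. By naturality of Steenrod operations, the $n$-tuple $(t_1, \ldots, t_n)$ is pulled back from $H^1(B(\z/2)^n) = \ft[t_1, \ldots, t_n]$ via the obvious classifying map, so Lemma~\ref{lem-key} gives
\[ \theta_n(t_1 \cdots t_n) = \sum_{2^{r_1} + \cdots + 2^{r_n} = 2^{n-1}} t_1^{2^{r_1}} \cdots t_n^{2^{r_n}} \]
as an identity in $H^*(G, \ft)$.

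Now I would invoke the fundamental Steinberg-type relation in Milnor $K$-theory: the identity $\ell(a)\ell(1-a) = 0$ readily yields $\ell(a)\ell(-a) = 0$, hence $\ell(a)^2 = \ell(a)\ell(-1)$. Transported to $H^*(G,\ft)$ via $h_F$, this reads $t^2 = t\,\ell(-1)$ for any $t \in H^1(G)$; a trivial induction then gives $t^{2^k} = t\,\ell(-1)^{2^k - 1}$ for all $k \ge 0$. Applying this to each summand of the formula above and observing that $\sum_i (2^{r_i} - 1) = 2^{n-1} - n$, every summand collapses to the same element
\[ t_1 \cdots t_n \cdot \ell(-1)^{2^{n-1} - n}, \]
so that $\theta_n(t_1 \cdots t_n) = N \cdot t_1 \cdots t_n \cdot \ell(-1)^{2^{n-1}-n}$ where $N \in \ft$ is the number of tuples $(r_1, \ldots, r_n)$ with $2^{r_1} + \cdots + 2^{r_n} = 2^{n-1}$, taken mod $2$.

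The last step is to show $N = 1$, which I would extract directly from Lemma~\ref{lem-key} by specializing all variables to a single $t$: the left-hand side becomes $N\,t^{2^{n-1}}$, while on the right each factor $m_k$ (for $k$ odd) collapses to $t^{\binom{n}{k}}$, so the product equals $t^{\sum_{k\text{ odd}} \binom{n}{k}} = t^{2^{n-1}}$. Thus $N \equiv 1 \pmod{2}$, finishing the proof. The main obstacle is nothing deep; the only potentially delicate point is the reduction to products of $1$-classes, which rests on the full Milnor conjecture, but this is available to us here without further work.
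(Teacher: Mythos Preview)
Your argument is correct and follows the same overall line as the paper: reduce to products $t_1\cdots t_n$ of degree-one classes (using that $H^*(G)$ is generated in degree~$1$), apply Lemma~\ref{lem-key}, and then use the relation $t^2 = \ell(-1)\,t$ in Galois cohomology to collapse every summand to a common multiple of $\ell(-1)^{2^{n-1}-n}\,t_1\cdots t_n$.

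The one place you differ from the paper is in verifying that the constant is~$1$ rather than~$0$. The paper argues indirectly: if the constant vanished, then $\theta_n$ would kill all of $H^n(G)$ for every absolute Galois group, contradicting the case $G=\z/2$ (the Galois group of~$\r$), where $\id_{\z/2}=(0)$ by Proposition~\ref{prop-ideal-elementary-abelian}. You instead specialise the identity of Lemma~\ref{lem-key} at $t_1=\cdots=t_n=t$: the sum side gives $N\,t^{2^{n-1}}$, while on the product side each factor of $m_k$ becomes $kt=t$ (as $k$ is odd), so $\prod_{k\ \textnormal{odd}} m_k = t^{\sum_{k\ \textnormal{odd}}\binom{n}{k}} = t^{2^{n-1}}$, whence $N\equiv 1\pmod 2$. (Note that in Lemma~\ref{lem-key} the product $\prod m_k$ is the \emph{left}-hand side and the sum the \emph{right}-hand side; you have swapped the labels, but the computation is unaffected.) Your route is more self-contained and yields the value of the constant explicitly; the paper's is shorter but draws on an earlier structural result.
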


\begin{proof}
  Since~$H^*(G)$ is generated by elements of degree~$1$, it is enough
  to prove this for~$x = t_1 t_2 \cdots t_n$ with~$t_i \in H^1(G)$. In
  this case the value of~$\theta_n(t_1 \cdots t_n)$ is given by
  Lemma~\ref{lem-key}. However, in Galois cohomology one has~$t^2 =
  \ell(-1) t$ for any~$t$ of degree~$1$. It follows that there exists
  a constant~$c_n$, depending only on~$n$, such that~$\theta_n(x) =
  c_n \ell(-1)^{2^{n-1} - n}x$ for any~$x$ of this form (namely,~$c_n$
  is the number of terms in the sum appearing in the statement of
  Lemma~\ref{lem-key}, reduced mod~$2$).

It remains to see that~$c_n \ne 0$. Indeed if it were~$0$, then one
would have~$\theta_n(x) = 0$ for any~$x \in H^n(G)$, regardless
of~$G$. However the example of~$\z/2$ (which is the absolute Galois group
of~$\r$) shows that~$\theta_n x$ can be non-zero, since
Proposition~\ref{prop-ideal-elementary-abelian} establishes
that~$\id_{\z/2} = (0)$.
\end{proof}

The commutative square of Theorem~\ref{thm-map-K-to-grW} then shows
clearly that~$\ourmap$ is an isomorphism when~$\ell(-1)$ is not a zero
divisor.


\bibliography{myrefs}
\bibliographystyle{amsalpha}
\end{document}